\documentclass[english,10pt,a4paper]{amsart}
\usepackage[T1]{fontenc}
\usepackage[left=2cm, right=2cm, top=2cm, bottom=2cm]{geometry}
\usepackage{amssymb}
\usepackage{amsthm,amsmath}
\usepackage{xcolor}
\usepackage{babel}

\usepackage{physics}
\usepackage{amsmath}
\usepackage{tikz}
\usepackage{mathdots}
\usepackage{yhmath}
\usepackage{cancel}
\usepackage{color}
\usepackage{siunitx}
\usepackage{array}
\usepackage{multirow}
\usepackage{amssymb}
\usepackage{gensymb}
\usepackage{tabularx}
\usepackage{extarrows}
\usepackage{booktabs}
\usetikzlibrary{fadings}
\usetikzlibrary{patterns}
\usetikzlibrary{shadows.blur}
\usetikzlibrary{shapes}
\usetikzlibrary{arrows.meta,decorations.markings,calc}
\usepackage{pgfplots}
\pgfplotsset{compat=1.17}
\usetikzlibrary{arrows.meta}

\tikzset{
	pattern size/.store in=\mcSize, 
	pattern size = 5pt,
	pattern thickness/.store in=\mcThickness, 
	pattern thickness = 0.3pt,
	pattern radius/.store in=\mcRadius, 
	pattern radius = 1pt}
\makeatletter
\pgfutil@ifundefined{pgf@pattern@name@_kqktknjhq}{
	\pgfdeclarepatternformonly[\mcThickness,\mcSize]{_kqktknjhq}
	{\pgfqpoint{0pt}{0pt}}
	{\pgfpoint{\mcSize+\mcThickness}{\mcSize+\mcThickness}}
	{\pgfpoint{\mcSize}{\mcSize}}
	{
		\pgfsetcolor{\tikz@pattern@color}
		\pgfsetlinewidth{\mcThickness}
		\pgfpathmoveto{\pgfqpoint{0pt}{0pt}}
		\pgfpathlineto{\pgfpoint{\mcSize+\mcThickness}{\mcSize+\mcThickness}}
		\pgfusepath{stroke}
}}
\makeatother

\tikzset{
	pattern size/.store in=\mcSize, 
	pattern size = 5pt,
	pattern thickness/.store in=\mcThickness, 
	pattern thickness = 0.3pt,
	pattern radius/.store in=\mcRadius, 
	pattern radius = 1pt}
\makeatletter
\pgfutil@ifundefined{pgf@pattern@name@_j16cwztqz}{
	\pgfdeclarepatternformonly[\mcThickness,\mcSize]{_j16cwztqz}
	{\pgfqpoint{0pt}{0pt}}
	{\pgfpoint{\mcSize+\mcThickness}{\mcSize+\mcThickness}}
	{\pgfpoint{\mcSize}{\mcSize}}
	{
		\pgfsetcolor{\tikz@pattern@color}
		\pgfsetlinewidth{\mcThickness}
		\pgfpathmoveto{\pgfqpoint{0pt}{0pt}}
		\pgfpathlineto{\pgfpoint{\mcSize+\mcThickness}{\mcSize+\mcThickness}}
		\pgfusepath{stroke}
}}
\makeatother
\tikzset{every picture/.style={line width=0.75pt}} 

\newtheorem{lemma}{Lemma}
\newtheorem{prop}{Proposition}
\newtheorem{thm}{Theorem}
\newtheorem{conjecture}{Conjecture}
\newtheorem{corollary}{Corollary}

\theoremstyle{definition}
\newtheorem{definition}{Definition}
\newtheorem{remark}{Remark}

\newcommand{\ve}{\varepsilon}
\newcommand{\sk}{s_{k,\theta_0}^{\theta_1,\theta_2}}
\newcommand{\skinv}{s_{-k,\theta_0}^{\theta_2,\theta_1}}

\newcommand{\rev}[1]{\textcolor{red}{#1}}

\title{Analytic Rigidity and Symbolic Dynamics for Two-Centre Billiards}
\author{Stefano Baranzini \and Susanna Terracini}

\date{}

\keywords{billiards; Birkhoff conjecture; two-centre problem; symbolic dynamics; analytic non-integrability; invariant manifolds; topological entropy; variational methods; Jacobi–Maupertuis metric}
\subjclass[2020] {
	37C83, 
	37B10, 
	37J30  
	37B40 
	37N05 
	37J51
	}

\begin{document}
		\begin{abstract}
		
We establish a sharp rigidity--chaos dichotomy for planar two-centre billiards, motivated by a natural analogue of the Birkhoff--Poritsky conjecture: the only tables integrable at every energy should be ellipses confocal with the two centres. Let $\Omega$ be a bounded domain with $\mathcal C^1$ boundary containing the segment joining the centres. At every fixed energy $h\geq 0$, if $\partial\Omega$ is not a confocal ellipse, we construct billiard trajectories that shadow the stable and unstable manifolds of the collision--reflection orbit and realise arbitrarily prescribed sequences of sufficiently large winding numbers around the segment. This yields an invariant set semiconjugate to the full shift on a countable alphabet, periodic trajectories with prescribed finite itineraries, and compact invariant subsystems with arbitrarily large topological entropy. If, in addition, $\partial\Omega$ is real-analytic, every real-analytic function on the fixed-energy phase space $M_h$ that is invariant under the billiard map is constant. This establishes the real-analytic form of the two-centre Birkhoff--Poritsky conjecture throughout the non-negative-energy regime.
\end{abstract}

	\maketitle

	\providecommand{\rev}[1]{{\color{red}#1}}

\section{Introduction}

In a Birkhoff billiard, a point particle moves freely inside a bounded, strictly
convex planar domain $\Omega$ and is reflected elastically whenever it reaches
the boundary. Despite the elementary nature of the model, the resulting
dynamical system is extremely rich and can display the full range of behaviours of conservative dynamics, from
complete integrability to positive topological entropy; see
\cite{Birkhoff1927,KozlovTreshchev1991,Tabachnikov2005}
for a general introduction.

Rigidity problems lie at the interface between dynamics and geometry. Rather than merely constructing integrable examples, they ask whether integrability forces the underlying system to have a highly constrained geometric form. Billiards provide a particularly transparent setting for this principle, since their dynamics is encoded by the shape of the reflecting boundary. The classification of integrable billiards can therefore be viewed as an inverse problem: to what extent does an integrable structure determine the geometry of the table?

Elliptic tables play a distinguished role. The billiard inside an ellipse is
integrable: the phase space is foliated by the invariant curves associated with
the family of confocal conics (the caustics), and the corresponding periodic
orbits obey Poncelet's closure theorem \cite{DragovicRadnovic2011}. Whether this
is the only instance of integrability is the content of a longstanding
question, usually attributed to Birkhoff and first recorded in print by Poritsky
\cite{Poritsky1950}.

\begin{conjecture}[Birkhoff--Poritsky]\label{conj:BP}
Among bounded, strictly convex planar domains with $\mathcal{
C}^2$ boundary, the only
integrable billiard tables are the ellipses.
\end{conjecture}

The model admits a natural extension within the framework of \emph{mechanical billiards}: between successive impacts, the particle evolves under a conservative force field, while reflections at $\partial\Omega$ remain elastic. As in Birkhoff billiards, the dynamics is closely intertwined with the geometry of the boundary, although the relevant geometry is no longer Euclidean but is instead determined by the underlying mechanical system. In particular, a first integral of the mechanical flow extends to a first integral of the billiard dynamics only if it is preserved by reflection at every boundary point. This imposes a strong compatibility between the reflecting wall and the integrable structure of the interior dynamics.

This point of view has been exploited to build rich families of integrable billiards starting from the Kepler potential.  Panov
\cite{Panov1994} observed that the billiard in an ellipse with a Newtonian
centre at one of its foci is integrable, at every energy level. A systematic
explanation, and a substantial list of further examples, was later obtained by Kozlov \cite{Kozlov1995} and
Takeuchi and Zhao
\cite{TakeuchiZhao2024conformal,TakeuchiZhao2024spaceforms}. 

In particular, \cite{TakeuchiZhao2024conformal}  raises the Keplerian counterpart of
Conjecture~\ref{conj:BP}, which has been revisited and partially solved in
\cite{BaranziniBarutelloDeBlasiTerracini2025}, except for a possible special position of the centre of attraction (\emph{focal point of the second kind}).

\begin{conjecture}[Birkhoff--Poritsky for Kepler billiards]\label{conj:kepler}
Among bounded, strictly convex planar domains with $\mathcal{C}^2$--boundary, the
only Kepler billiards which are integrable at all energy levels are the ellipses
with the centre of attraction located at one of the foci.
\end{conjecture}

In this paper we study the \emph{two-centre} mechanical billiard problem, in which a particle moves in the plane and is subject to the attraction of two fixed Newtonian centres. 
As one of the simplest genuinely non-central Liouville-integrable systems, the two-centre problem offers a particularly sharp setting in which to investigate rigidity phenomena in integrable dynamics:  the guiding question is which reflecting boundaries are compatible with the conservation laws of the integrable Hamiltonian flow. 

The particle satisfies the following equation of motion:
\begin{equation}\label{eq:2centres}
 \ddot x=\nabla U(x)\;\qquad U(x)=\frac{m_{1}}{|x-c_{1}|}+\frac{m_{2}}{|x-c_{2}|},
  \qquad m_{1}\ge m_{2}>0
\end{equation}
which are associated to the Hamiltonian
\begin{equation}\label{eq:H}
  H(p,x)=\frac{1}{2}|p|^{2}-\frac{m_{1}}{|x-c_{1}|}-\frac{m_{2}}{|x-c_{2}|}.
\end{equation}

The two-centre problem was first solved by Euler in two foundational
memoirs \cite{Euler1766,Euler1767}: it is
Liouville integrable, and it separates in the elliptic--hyperbolic coordinates
 adapted to the confocal family of conics generated by $c_{1}$ and $c_{2}$,
see \cite{Jacobi1884,WaalkensDullinRichter2004} for the classical theory and
for the global structure of the Liouville foliation and Section \ref{sec:two_centres}. Confining the system to a
bounded table and adding an elastic reflection law produces the object of our
study, the \emph{two-centre billiard}. 

Once again, integrability survives in one
distinguished configuration:
if $\partial\Omega$ is the ellipse with foci
exactly at $c_{1}$ and $c_{2}$, the elastic reflection at $\partial \Omega$ is compatible with the separation in elliptic--hyperbolic coordinates, so the first integral of the continuous system is preserved across collisions with the boundary and the billiard remains integrable, see \cite{TakeuchiZhao2024conformal}. Our aim is to show that this configuration is rigid.

\begin{conjecture}\label{conj:main}
Let $\Omega\subset\mathbb{R}^{2}$ be a bounded strictly convex domain containing
the two centres $c_{1},c_{2}$. Then the two-centre billiard in $\Omega$ is
integrable at every energy level if and only if $\partial\Omega$ is an ellipse with foci $c_{1}$ and
$c_{2}$.
\end{conjecture}

We deliberately leave the notion of integrability unspecified: the conjecture is
meant to hold for any of the notions in use for billiards, from the existence of
a non-constant first integral with some prescribed regularity to the existence of a foliation of the domain of the billiard map by invariant curves. As a matter of fact, the literature on billiards contains several non-equivalent notions of
integrability, which give  correspondingly different
meanings to all the  conjectures under consideration. 

\subsection*{Main Results}

We prove the 
analytic rigidity predicted by
Conjecture~\ref{conj:main} \emph{at any non negative energy level} under the sole  assumption
that the bounded domain contains the whole segment
$[c_1,c_2]$ and has $\mathcal C^1$ boundary.  If $\Omega$ is not a confocal ellipse,
analytic integrability does not merely fail: the billiard contains a symbolic
subsystem carrying the full shift on a countable alphabet.

We have the following results.

\begin{thm}\label{thm:A}
Let $\Omega\subset\mathbb{R}^{2}$ be a bounded domain with $\mathcal{C}^{1}$
boundary containing the segment $[c_{1},c_{2}]$. Assume further that
$\partial\Omega$ is \emph{not} an ellipse with foci $c_{1}$ and $c_{2}$. Then,
for every energy $h\ge 0$, the two-centre billiard map in $\Omega$ at energy $h$
admits an invariant set $\Lambda_{h}$ on which it is semiconjugate to the full
shift on a countable alphabet. Thus, it contains compact invariant subsystems of  arbitrarily large topological entropy. 
\end{thm}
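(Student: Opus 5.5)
We will build the symbolic dynamics from the collision--reflection orbit: the trajectory that leaves the segment $[c_1,c_2]$ (after Levi-Civita regularisation of the double collisions), travels along a branch of the confocal hyperbola/degenerate conic, and hits $\partial\Omega$. It then returns along the same curve if and only if the boundary is orthogonal there to the Jacobi--Maupertuis geodesic. The plan is to work in the Jacobi--Maupertuis metric $g_h=(h+U)|dx|^2$, which for $h\ge0$ is complete near the centres after regularisation and has nonnegative "energy" everywhere. In this metric billiard trajectories become broken geodesics reflecting elastically at $\partial\Omega$, and the segment $[c_1,c_2]$ behaves, after the Levi-Civita/Euler regularisation in elliptic coordinates, like a hyperbolic invariant set. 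Motion near the segment winds around it, so the winding number around $[c_1,c_2]$ provides a natural integer label.

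\textbf{Step 1.} Use Euler's separation in elliptic--hyperbolic coordinates $(\xi,\eta)$. We would show that the family of "brake" or "ejection" orbits emanating from the segment (the degenerate ellipse $\xi=0$) forms, at energy $h\ge0$, a normally hyperbolic invariant object for the regularised flow. Its stable and unstable manifolds, $W^s$ and $W^u$, consist exactly of the trajectories asymptotic to ejection from/collision with the segment. Their traces on the boundary phase space $M_h$ are two curves, $\mathcal W^u$ and $\mathcal W^s$. For the confocal ellipse the separated integral is preserved by reflection, and these curves coincide identically.

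\textbf{Step 2 (non-degeneracy).} Show that if $\partial\Omega$ is not a confocal ellipse, then after one reflection the image of $\mathcal W^u$ meets $\mathcal W^s$ transversally, or at least topologically crossing, at some point. The mechanism is the following. The reflected unstable curve coincides with the stable curve iff at every boundary point the outgoing ejection direction reflects to an incoming collision direction. This is equivalent to the separated integral (the Euler integral $G$) taking on the reflected vector the same critical value as on the ejection family. That condition forces $\partial\Omega$ to be a level set of $\xi$, i.e.\ a confocal ellipse, by an ODE/uniqueness argument along the $\mathcal C^1$ boundary. Since only $\mathcal C^1$ regularity is available, we would aim for a \emph{topological crossing} (a sign change of $G-G_0$ along the reflected unstable curve), which suffices for the variational shadowing below.

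\textbf{Step 3 (shadowing and coding).} For each sequence $(k_j)_{j\in\mathbb Z}$ of sufficiently large integers, construct a billiard trajectory that between consecutive reflections near the crossing point winds $k_j$ times around the segment. We would do this variationally: minimise (or min-max) the Jacobi length among broken curves in the homotopy class prescribed by the winding numbers, with endpoints constrained to small boundary arcs around the crossing point. First do finite windows with periodic boundary conditions, then pass to a diagonal limit for bi-infinite sequences. Large winding keeps the pieces close to the hyperbolic segment, so near-collision estimates (Levi-Civita) control them. The topological crossing guarantees that the minimiser lies in the interior of the constraint, hence is a genuine reflection rather than a corner. The resulting invariant set $\Lambda_h$, coded by winding numbers, is semiconjugate to the shift on $\{k\ge k_0\}^{\mathbb Z}$. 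Restricting to alphabets $\{k_0,\dots,k_0+N\}$ gives compact subsystems semiconjugate to full $N+1$-shifts, hence of entropy at least $\log(N+1)\to\infty$.

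\textbf{Main obstacle.} The main obstacle is Step 2 combined with the interior-point argument in Step 3 under mere $\mathcal C^1$ regularity and at $h=0$, where the metric degenerates at infinity only mildly but hyperbolicity near the segment is weakest. We would first try to prove the crossing by a degree argument on the sign of $G-G_0$ along the reflected ejection curve, using that this sign cannot be constant unless $\partial\Omega$ is confocal.
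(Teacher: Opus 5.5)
\textbf{The gap.} Nothing in Step~3 shows that the arcs you build are billiard arcs: that they lie in $\overline\Omega$, meet $\partial\Omega$ only at the prescribed nodes, and do so transversally. The theorem allows an arbitrary bounded $\mathcal C^1$ domain with no convexity of any kind. Near its endpoints a large-winding arc follows the stable and unstable branches through $\gamma(\theta_1)$ and $\gamma(\theta_2)$; ``large winding keeps the pieces close to the segment'' controls only the middle part. For a general $\Omega$ these branches can cross $\partial\Omega$ elsewhere before approaching $[c_1,c_2]$. At some sign-change crossings they do not even enter $\Omega$: if $\partial\Omega$ has a fold and the crossing is a strict local extremum of $\xi$ on a strand where $\Omega$ lies on the side $\{\xi>\xi(\theta)\}$ of the tangent confocal ellipse, then the inward normal is $+\partial_\xi$ and both $X^h_\pm=(-a,\pm b)$ point outward, although $f'$ does change sign there. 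Minimising among curves in $\mathbb R^2\setminus[c_1,c_2]$ then yields broken geodesics with uncontrolled extra intersections with $\partial\Omega$. Minimising among curves in $\overline\Omega$ instead yields obstacle contacts (touching or sliding along $\partial\Omega$) that are not specular reflections, and your interior-point argument only controls the prescribed nodes. Transversality of the impacts, needed for continuity of $\mathcal B$ and hence for compactness of the finite-alphabet subsystems, is also left open.

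\textbf{The missing idea.} Place the nodes near the \emph{global minimum} $\bar f$ of $f=\tfrac12(r_1+r_2)$ on $\partial\Omega$, not at an arbitrary crossing. There the inscribed confocal region $\{f<\bar f\}$ lies in $\Omega$. The boundary is tangent to $\{\xi=\bar\xi\}$, so near the minimum set $X^h_\pm$ make an angle bounded away from $\pi/2$ with the inward normal. For nodes in a short interval $I$ around the minimum set, each arc reaches $\{\xi=\bar\xi\}$ within a uniformly short time, because $|\dot\xi|$ is bounded below once the turning point is below $\bar\xi/2$. This happens within the time during which the initial transversality keeps it inside $\Omega$, and the arc then stays in $\{f<\bar f\}$ until its symmetric final portion.

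The minimum is also what makes your minimisation legitimate. The derivative of the action at a node is, up to an error vanishing as $k\to\infty$, a positive multiple of $f'(\theta_i)$. So on $I=[\theta_-,\theta_+]$ with $f'(\theta_-)<0<f'(\theta_+)$ the action decreases into the box and the minimiser is interior, whereas near a maximum of $f$ it would sit on the faces. The paper avoids this distinction by using Poincar\'e--Miranda.

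\textbf{Smaller points.} Step~2 needs no ODE or degree argument: the tangential mismatch between the incoming unstable direction $-X^h_+$ and the outgoing stable direction $X^h_-$ is a positive multiple of $f'$, and $f$ is constant exactly when $\partial\Omega$ is a confocal ellipse. The quantitative input you leave implicit is the convergence, uniform in the endpoints, of the initial and final velocities of the winding-$k$ arcs to $X^h_\pm$; the paper gets this from $|K_0-\bar K_0|\le Ce^{-c|k|}$ and Gronwall. Finally, the hyperbolic object is the regularised collision--reflection periodic orbit running along $[c_1,c_2]$ itself, not an ejection orbit hitting $\partial\Omega$ orthogonally.
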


\begin{thm}\label{thm:B}
Under the assumptions of Theorem~\ref{thm:A} and, additionally, that the boundary $\partial\Omega$ is real analytic, for any energy $h\ge0$ every real-analytic first integral of the billiard map  is constant. 
\end{thm}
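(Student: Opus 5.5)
The plan is to derive Theorem~\ref{thm:B} from the symbolic dynamics of Theorem~\ref{thm:A}, combined with the rigidity of real-analytic functions. Fix $h\ge 0$ and let $F:M_h\to\mathbb R$ be a real-analytic function invariant under the billiard map $T_h$. Since $\partial\Omega$ is real analytic, $M_h$ is a connected real-analytic manifold and $T_h$ is real analytic away from the grazing and collision sets. Hence it suffices to show that $F$ is constant on a set with an accumulation point in the interior of $M_h$, or on a set of positive measure. The identity theorem then propagates constancy to all of $M_h$. Connectedness of $M_h$ for $h\ge0$ follows from the Levi-Civita regularised description of the two-centre flow, which I take from Section~\ref{sec:two_centres}.

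First, I use the hyperbolic collision--reflection orbit $\gamma$ from the construction behind Theorem~\ref{thm:A}. This is a periodic orbit of the regularised billiard, bouncing along the segment through the centres and reflecting at $\partial\Omega$. It has analytic local stable and unstable manifolds $W^s(\gamma)$ and $W^u(\gamma)$. An invariant continuous $F$ is constant on $\gamma$ because $\gamma$ is a single orbit. It is also constant on $W^s(\gamma)$ and $W^u(\gamma)$, since $F(T^n x)\to F(\gamma)$ along these manifolds. So $F\equiv c$ on $W^s\cup W^u$.

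Second, I show that $W^s$ and $W^u$ intersect transversally, or at least that they do not coincide as analytic curves, whenever $\partial\Omega$ is not a confocal ellipse. This is exactly the non-degeneracy extracted in proving Theorem~\ref{thm:A}. The shadowing construction with prescribed winding numbers rests on a splitting (a separatrix that is not a connection), and in the confocal case the separatrices coincide because of the Euler integral. Given a transverse homoclinic point $q$, the $\lambda$-lemma shows that iterates of a small arc of $W^u$ through $q$ accumulate in $\mathcal C^1$ on $W^u$ near $\gamma$. Combined with the invariant set $\Lambda_h$, this gives a Cantor-like family of unstable-manifold pieces filling an open region in two transverse directions. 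Concretely, the closure of $W^u(\gamma)$ contains a neighbourhood of a hyperbolic horseshoe point, and on that closure $F\equiv c$ by continuity. Alternatively, $F$ is constant on an analytic curve $W^u$, and also on curves accumulating on it from a transverse direction. The function $F-c$ therefore vanishes on infinitely many distinct analytic curves converging to a curve. By analyticity in the transverse variable along a transversal segment, $F-c$ vanishes identically near that curve.

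The main obstacle is this last step: turning constancy on stable and unstable manifolds into constancy on an open set. The problem is acute because the shadowing happens near collisions, where $T_h$ is only analytic after regularisation. I would therefore work in Levi-Civita coordinates, where $\gamma$ and its manifolds are honest analytic objects. I would take a short analytic transversal $\sigma$ to $W^u$ at a point near $\gamma$. The homoclinic tangle meets $\sigma$ in a sequence of points accumulating at $\sigma\cap W^u$, and $F-c$ vanishes at all of them. Since $F|_\sigma$ is real analytic in one variable, $F|_\sigma\equiv c$. Varying the base point along $W^u$ gives $F\equiv c$ on an open set, and the identity theorem then gives $F\equiv c$ on $M_h$.
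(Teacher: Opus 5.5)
Your closing step is exactly how the paper concludes. You restrict to a real-analytic transversal, find zeros of $F-c$ accumulating at a point of the invariant curve, sweep the base point, and apply the identity principle on $M_h\cong\mathbb S^1\times(-\pi/2,\pi/2)$. That annulus is trivially connected, so no regularisation is needed there. The paper's transversal is the velocity fibre $F_{\bar\theta}$ over an impact point $\bar\theta\in I$. The gap is in the two dynamical inputs that feed this step.

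\textbf{The hyperbolic object is misidentified.} $\Gamma_h$ lies on the segment $[c_1,c_2]$, bouncing between the two centres, and never reaches $\partial\Omega$. It is not a periodic orbit of the billiard map and corresponds to no point of $M_h$. The stable manifolds of $\Gamma_h$ and $\Gamma_h^{-1}$ trace on $M_h$ the graphs $\mathcal G_h^\pm$. Over $I$, these consist of states whose forward orbit tends to $\Gamma_h$ without returning to $\partial\Omega$ (Proposition~\ref{prop:exit_time_finite}(iii) together with the confinement argument of Proposition~\ref{prop:confinement}). They therefore lie outside the domain of $\mathcal B$, and $\mathcal B$ admits no continuous extension to them. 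Your argument ``$F(\mathcal B^n x)\to F(\gamma)$'' thus has no meaning.

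The paper gets constancy on $\mathcal G_h^+$ from this very singularity. Let $z_k$ be the initial state of $s^{\theta_1,\theta_2}_{k,\theta_0}$ with $\theta_1,\theta_2\in I$. Then $F(z_k)=F(\mathcal B z_k)$. Proposition~\ref{prop:approximation_stable_manifold}, applied at both ends of the arc, gives $z_k\to(\theta_1,\alpha_h^+(\theta_1))$ and $\mathcal B z_k\to(\theta_2,-\alpha_h^-(\theta_2))$, with $\theta_1,\theta_2$ independent. Hence $F$ is constant on $\mathcal G_h^+$ over $I$. This needs the arcs to be genuine billiard arcs, which is why everything is localised near the minimum set of $f$. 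You never address confinement, and without convexity the stable branches issued from other boundary points may leave $\Omega$.

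\textbf{There is no transverse homoclinic point to feed a $\lambda$-lemma.} Theorem~\ref{thm:A} is proved by Poincar\'e--Miranda using only the sign change of $f'$ on $I$; the paper deliberately avoids any splitting or transversality statement. In $M_h$ the curves $\mathcal G_h^+$ and $\iota(\mathcal G_h^-)$ meet exactly at zeros of $f'$ (Lemma~\ref{lemma:critical_point}). The crossing is transverse only at non-degenerate critical points, and for a real-analytic boundary that is not a confocal ellipse the minimum of $f$ may be degenerate. In any case, $M_h$ contains no smooth map with a hyperbolic fixed point on which to run the $\lambda$-lemma. Your claim that the closure of $W^u$ contains an open set is also unfounded, since a horseshoe is a Cantor set.

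The accumulating zeros come instead from Corollary~\ref{cor:one_sided}. Fix $\bar\theta\in I$ and take one-sided itineraries with $\sigma_i\to+\infty$. Then $\mathcal B^n z(\sigma)$ approaches $\mathcal G_h^+$ over $I$, so $F(z(\sigma))=\lim_n F(\mathcal B^n z(\sigma))=c_+$. The points $z(\sigma)\in F_{\bar\theta}$ are pairwise distinct and tend to $(\bar\theta,\alpha_h^+(\bar\theta))$ as $\sigma_1\to\infty$.

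To keep your homoclinic-tangle picture, you would need three things:
\begin{itemize}
\item pass to a section of the regularised flow transverse to $\Gamma_h$, including the boundary excursions;
\item prove that the induced invariant function extends continuously to the fixed point there, which is the paper's first step;
\item replace $\mathcal C^1$ transversality by the topological crossing given by the sign change of $f'$.
\end{itemize}
At that point you would essentially have rebuilt the paper's argument.
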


Some comments are in order. First of all, let us remark that the dichotomy integrable/non-integrable is sharp and requires no genericity. The only
domains that do not support a symbolic dynamics are confocal ellipses themselves, on which the billiard
is integrable. 

Furthermore, the results hold at any non-negative energy level, in contrast with the perturbative results of 
\cite{BaranziniBarutelloDeBlasiTerracini2025,Baranzini2025}  which hold for $h$ large, and require minimal regularity assumptions on $\Omega$.

The mechanism behind our theorems is the following. At energy $h$, the
segment $[c_{1},c_{2}]$ supports a collision--reflection periodic orbit whose
stable and unstable manifolds sweep the region outside the segment.
Trajectories winding many times around $[c_{1},c_{2}]$ shadow long arcs of
these manifolds. The connection with the boundary geometry is encoded by
\begin{equation}\label{eq:f-intro}
    f(\theta)=\tfrac{1}{2}\bigl(
    |\gamma(\theta)-c_{1}|+|\gamma(\theta)-c_{2}|
    \bigr).
\end{equation}
Indeed, at $\gamma(\theta)$, the limiting incoming and outgoing branches
satisfy the elastic reflection law precisely when $f'(\theta)=0$, or
equivalently when $\partial\Omega$ is tangent there to a confocal ellipse.
Thus, for large winding numbers, the reflection equations are small
perturbations of $f'=0$; their solutions close the chains, while the freedom
in choosing the winding numbers generates the symbolic dynamics. Finally,
$f$ is constant if and only if $\partial\Omega$ is itself a confocal ellipse,
precisely the case in which this construction fails.

The obstruction to analytic integrability is localised near the stable
directions over the stable critical points of  $f$. The existence of this symbolic
subsystem does not by itself imply global chaotic behaviour, such as
ergodicity. Indeed, under suitable regularity and convexity assumptions,
Section~\ref{subsec:lazutkin} adapts the argument of Lazutkin, in the form
developed in \cite{DiasCarneiroEtAl2024}, to prove the existence of invariant
curves near the boundary.

\subsection*{Around the Birkhoff Conjecture}

Conjecture~\ref{conj:BP} remains open in full generality, despite substantial progress over the past few decades. Under the strong assumption that the entire phase cylinder is foliated by non-contractible invariant curves, Bialy's Hopf-type argument~ \cite{Bialy1993}
implies that the table must be a circle.
From a perturbative perspective, local versions were proved by Avila, De~Simoi and Kaloshin
\cite{AvilaDeSimoiKaloshin2016} for deformations of ellipses of small
eccentricity, extended by Kaloshin and Sorrentino
\cite{KaloshinSorrentino2018} to arbitrary eccentricity, and by Huang, Kaloshin
and Sorrentino \cite{HuangKaloshinSorrentino2018} and Koval \cite{Koval2026} for foliations arbitrarily close to the boundary; see \cite{KaloshinSorrentino2018survey} for a
survey. For centrally symmetric tables, Bialy and Mironov
\cite{BialyMironov2022} resolved the conjecture under the corresponding
near-boundary integrability assumption.

A different approach consists in exhibiting transverse homoclinic
intersections --- a splitting of separatrices --- which directly rule out the
existence of an analytic first integral, see for instance 
\cite{DelshamsRamirezRos1996,BaldomaFlorioLeguilSeara2026}. This is closer to the philosophy of the present contribution, even if the mechanism is slightly different: instead of proving a
	transverse homoclinic intersection, we construct a symbolic subsystem directly
	from arcs shadowing the invariant manifolds of a periodic orbit contained in $\Omega$.

Thus, for non-negative energies and within the class of real-analytic first
integrals, our results establish the corresponding version of
Conjecture~\ref{conj:main}.

\subsection*{Order and chaos in Kepler-type billiards}
Interest in mechanical billiards goes back at least to Boltzmann
\cite{Boltzmann1868}, who considered a particle subject to a central force, a superposition of Kepler potential and a term of order $\beta/r^2$,
bouncing on a straight wall as a candidate model for ergodic behaviour. This has recently attracted some interest since, contrary
to Boltzmann's claim, the model is quasi-integrable when $\beta$ is small, as clarified only recently in
\cite{Felder2021}. 
The reason is that the corresponding Kepler billiard, for $\beta = 0$, is integrable. On the other hand, it is chaotic for positive energies when $\beta>0$ (\cite{DeBlasiTakeuchiTerracini}).

The Kepler and two-centre billiards in an ellipse are part of a broader class of integrable mechanical billiards. Kozlov \cite{Kozlov1995} embedded the two-centre model into a larger family of separable systems for which the billiard inside a confocal ellipse remains integrable. The common mechanism is the existence of a first integral of the interior Hamiltonian flow that is also preserved by elastic reflection at the boundary. The same principle underlies Panov's theorem and the integrability of the Kepler billiard with a straight wall; in the latter case, the preserved quantity combines the angular momentum with components of the Laplace--Runge--Lenz vector. We refer to \cite{JaudZhao2024} for a detailed description of its dynamics. More recently, this principle has been used in \cite{TakeuchiZhao2024projective} to construct further integrable mechanical billiards by means of conformal transformations of the plane.

Chaotic features of Kepler and refraction billiards have been investigated,
both numerically and rigorously, in
\cite{BarutelloDeBlasiTerracini2023,BarutelloCherubiniDeBlasi2025}.
The first substantial step towards Conjecture~\ref{conj:kepler} was taken in
\cite{BaranziniBarutelloDeBlasiTerracini2025}, where it is proved that for a
strictly convex domain $\Omega$ with real-analytic boundary the Kepler billiard
is not analytically integrable at high energies unless one of the following two alternatives occurs. Either $\partial\Omega$ is an
ellipse with the centre at one of its foci, or $\Omega$ has a very special invariant curve of rotation number $1/2$ for the corresponding Birkhoff billiard. It consists of the pencil of lines through $c$ and their reflections. Such
points cannot be excluded and they occur on an infinite-dimensional family of
non-elliptic domains, see also \cite{Baranzini2026}.
The zero-energy case of Conjecture~\ref{conj:kepler} is discussed in \cite{Zhao2025}, where a parallel with the Birkhoff billiard is drawn, allowing to translate the main result of \cite{BialyMironov2022} to the Kepler setting.

Another feature for which the two-centre problem is special, is that the $N$-centre problem for $N\ge 3$ is no longer integrable. In the absence of a reflecting wall, non-integrability
and symbolic dynamics have been established under several sets of hypotheses,
depending on the energy regime and on the precise form of the potential; see
\cite{Bolotin1984,KleinKnauf1992,BolotinNegrini2001,Knauf2002,%
	KnaufTaimanov2005,SoaveTerracini2012,BarutelloCanneoriTerracini2021,%
	BaranziniCanneori2024}.
The case of a
straight reflecting wall and  $N$-centre with $N\ge 2$ is treated in \cite{Baranzini2025}.

\subsection*{Strategy of the proof}

At a fixed energy, trajectories of the two-centre problem are, up to a time
reparametrisation, geodesics of the Jacobi--Maupertuis metric. On the
regularising double cover this metric is smooth, complete, and negatively
curved; hence a geodesic joining two prescribed endpoints is uniquely
determined by its homotopy class. 

We use them to construct minimising arcs between boundary points with a
prescribed, large winding number around $[c_1,c_2]$. This yields a countable family of generating functions. As the winding number
tends to infinity, the arcs shadow the stable and unstable manifolds of the
collision--reflection orbit. When their endpoints lie in a sufficiently small
boundary interval near a minimum of $f$, for large winding numbers these arcs remain inside $\Omega$ and
meet $\partial\Omega$ only at their endpoints.

Concatenating these arcs reduces the reflection law at each intermediate
impact to a stationarity equation for the total Jacobi--Maupertuis length.
For large winding numbers, these equations are uniformly approximated by
$f'=0$. The finite-dimensional Poincar\'e--Miranda theorem yields periodic
orbits with prescribed winding data, while its countable-product version
produces complete trajectories realising arbitrary bi-infinite sequences.
Finite subalphabets then give compact invariant sets with arbitrarily large
topological entropy.

For analytic non-integrability, we fix the first impact and use one-sided
sequences with increasing winding numbers. The resulting initial conditions
accumulate on the stable graph and force any first integral to be constant on
a non-empty open subset of $M_h$. When $\partial\Omega$ is real-analytic,
$M_h$ is a connected real-analytic manifold, so the analytic identity
principle extends this constancy to all of $M_h$.

\subsection*{Structure of the paper}

Section~\ref{sec:two_centres} collects some classical facts on the
two-centre problem in elliptic coordinates, regularises the collisions, and
introduces the collision--reflection orbit together with the vector fields tangent to the stable and unstable manifolds.  Section~\ref{sec:billiard_map} introduces the billiard map, discusses return and grazing, and records a
Jacobi--Maupertuis convexity criterion for the global well posedness of the problem (together with the near-boundary KAM invariant 
curves under stronger regularity).  Section~\ref{sec:approximation} constructs
the minimising arcs, proves their approximation to the invariant manifolds, and
establishes the confinement needed for the symbolic construction.  Finally,
Section~\ref{sec:symbolic} applies the Poincar\'e--Miranda theorem to build the symbolic
dynamics and proves analytic non-integrability.

\subsection*{Acknowledgements}
The authors acknowledge the support of the INdAM--GNAMPA Research Group. The first author also acknowledges financial support from the Alexander von Humboldt Foundation.

	\section{The 2-centre problem}
\label{sec:two_centres}

In this section we recall some of the features of the
$2$-centre problem that will be used in the sequel. The separation of the problem
in elliptic-hyperbolic coordinates goes back to Euler \cite{Euler1766,Euler1767}; we also
record the Jacobi--Maupertuis formulation and the curvature computation needed
later. The elliptic-hyperbolic coordinates are also well suited to regularisation: on the branched
double cover the collisions become regular points, while the motion supported on
the segment $[c_1,c_2]$ gives rise to the hyperbolic collision--reflection orbit
around which the later construction is organised.

We write $r_i=|x-c_i|$ and $\hat r_i=(x-c_i)/r_i$, and consider the
Hamiltonian
\begin{equation}
  \label{eq:hamiltonian_2centres}
  H(p,x)=\frac12\vert p\vert^2-U(x),
  \qquad
  U(x):=\frac{m_1}{\vert x-c_1\vert}+\frac{m_2}{\vert x-c_2\vert}
       =\frac{m_1}{r_1}+\frac{m_2}{r_2},
\end{equation}
with $m_1\ge m_2>0$. We shall also write $\mu_1=m_1+m_2$ and
$\mu_2=m_1-m_2\ge0$, so that $\mu_1-\mu_2=2m_2>0$. 

In the following discussion we place the centres at
\[
c_1=-e_1=(-1,0),\qquad c_2=e_1=(1,0),\qquad \ell:=|c_1-c_2|=2.
\]
This greatly simplifies all the formulae. Note that, thanks to the invariance under isometries and to the homogeneity of the potential $U$, there is no loss of generality in doing so.

\subsection{Elliptic-hyperbolic coordinates}

\begin{figure}
\centering
\begin{tikzpicture}[x=1.55cm,y=1.55cm,>=Stealth,
  every node/.style={font=\small}]


\draw[->,thin] (-3.25,0) -- (3.25,0) node[right] {$x_1$};
\draw[->,thin] (0,-2.45) -- (0,2.45) node[above] {$x_2$};

\draw[line width=1.15pt] (-1,0)--(1,0);
\fill (-1,0) circle (1.5pt) node[below left=1pt] {$c_1$};
\fill ( 1,0) circle (1.5pt) node[below right=1pt] {$c_2$};
\node[below=7pt] at (0,0) {$\xi=0$};

\foreach \xi in {0.35,0.65,0.95,1.20} {
  \draw[thin]
    plot[domain=0:360,samples=220,variable=\t]
    ({cosh(\xi)*cos(\t)},{sinh(\xi)*sin(\t)});
}

\foreach \eta in {25,45,65,115,135,155} {
  \draw[thin,densely dashed]
    plot[domain=-1.55:1.55,samples=180,variable=\s]
    ({cosh(\s)*cos(\eta)},{sinh(\s)*sin(\eta)});
}

\node[fill=white,inner sep=1.5pt] at (-2.28,1.60)
  {$\xi=\mathrm{const}$};
\node[fill=white,inner sep=1.5pt,rotate=50] at (2.52,1.55)
  {$\eta=\mathrm{const}$};

\end{tikzpicture}

\caption{Elliptic-hyperbolic coordinates associated with the two centres \(c_1\) and \(c_2\). The curves \({\xi=\mathrm{const}}\) are ellipses confocal with \(c_1\) and \(c_2\), whereas the curves \({\eta=\mathrm{const}}\) are the branches of the corresponding confocal hyperbolas. The degenerate level \({\xi=0}\) is a double cover of the segment $[c_1,c_2]$.}
\label{figure:elliptic_hyperbolic}
\end{figure}

First of all, let us introduce the elliptic-hyperbolic coordinates.
We consider the change of coordinates
\begin{equation}
  \label{eq:elliptic_coordinates}
  x=(x_1,x_2)=\Phi(\xi,\eta)=\bigl(\cosh\xi\cos\eta,\ \sinh\xi\sin\eta\bigr),
\end{equation}
or equivalently, in complex notation $z=x_1+ix_2$ and $w=\xi+i\eta$,
\begin{equation}
  \label{eq:cosh}
  z=\cosh w .
\end{equation}
A straightforward computation shows that
\[
  \begin{cases}
    dx_1=\sinh\xi\cos\eta\,d\xi-\cosh\xi\sin\eta\,d\eta,\\
    dx_2=\cosh\xi\sin\eta\,d\xi+\sinh\xi\cos\eta\,d\eta,
  \end{cases}
\]
and thus the Euclidean metric in the new coordinates reads
\begin{align}
  \label{eq:euclidean_elliptic}
  dx_1^2+dx_2^2
   &=\bigl(\sinh^2\xi\cos^2\eta+\cosh^2\xi\sin^2\eta\bigr)
     \bigl(d\xi^2+d\eta^2\bigr)\\
  \nonumber
   &=\bigl(\cosh^2\xi-\cos^2\eta\bigr)\bigl(d\xi^2+d\eta^2\bigr).
\end{align}
Moreover, the distances $\vert x\pm e_1\vert$ are given by
\begin{align*}
  \vert x\pm e_1\vert
   &=\sqrt{\cosh^2\xi\cos^2\eta+1\pm2\cosh\xi\cos\eta+\sinh^2\xi\sin^2\eta}\\
   &=\sqrt{\sinh^2\xi+1+\cos^2\eta\pm2\cosh\xi\cos\eta}
    =\cosh\xi\pm\cos\eta,
\end{align*}
that is
\begin{equation}
  \label{eq:r1r2}
  r_1=\cosh\xi+\cos\eta,\qquad r_2=\cosh\xi-\cos\eta.
\end{equation}
It follows that the level sets of the first coordinate correspond to ellipses with foci at $\pm e_1$.
Indeed
\begin{equation}
  \label{eq:function_ellipse}
  2\cosh\xi=\vert x+e_1\vert+\vert x-e_1\vert=r_1+r_2 .
\end{equation}
 A similar consideration, which explains the name \textit{elliptic-hyperbolic}, holds for the level sets of $\eta$: they are branches of confocal hyperbolas.
Two consequences of \eqref{eq:r1r2} will be used repeatedly. First, comparing
with \eqref{eq:euclidean_elliptic},
\begin{equation}
  \label{eq:conformal_factor}
  \cosh^2\xi-\cos^2\eta=r_1r_2 ,
\end{equation}
so the conformal factor vanishes exactly at the two collisions. Second, the potential $U$ satisfies
\begin{equation}
  \label{eq:lambdaU}
  r_1r_2\,U=m_1r_2+m_2r_1=\mu_1\cosh\xi-\mu_2\cos\eta ,
\end{equation}
which extends to a real-analytic function on the whole $(\xi,\eta)$ plane and is bounded below by $\mu_1-\mu_2=2m_2>0$. 

Finally, let us note that the map $\Phi$ of
\eqref{eq:elliptic_coordinates} descends to a two-to-one conformal branched
covering
\[
  \Phi\colon\ \mathcal{C}:=\mathbb{R}\times\mathbb{S}^1\ \longrightarrow\ \mathbb{R}^2 ,
\]
whose deck transformation is the involution
\begin{equation}
  \label{eq:deck}
  \mathfrak{d}(\xi,\eta)=(-\xi,-\eta)
\end{equation}
induced by $w\mapsto-w$. Its branch points are
\[
  (\xi,\eta)=(0,\pi)\ \longmapsto\ -e_1,
  \qquad
  (\xi,\eta)=(0,0)\ \longmapsto\ e_1 ,
\]
the two fixed points of $\mathfrak{d}$. Moreover, the circle $\{\xi=0\}$ is a double cover
of the segment $[-e_1,e_1]$.

\subsection{Separation of the equation of motion}

Elliptic-hyperbolic coordinates are very useful since they allow us to separate the variables in the Hamiltonian given in \eqref{eq:hamiltonian_2centres}. In the $(\xi,\eta)$ coordinates it reads
\begin{align}
  \label{eq:hamiltonian_elliptic_hyperbolic}
  \tilde H(p_\xi,p_\eta,\xi,\eta)
   &=\frac{p_\xi^2+p_\eta^2}{2(\cosh^2\xi-\cos^2\eta)}
     -\frac{m_1}{\cosh\xi+\cos\eta}-\frac{m_2}{\cosh\xi-\cos\eta}\\
  \nonumber
   &=\frac{\frac{p_\xi^2+p_\eta^2}{2}
           -\mu_1\cosh\xi+\mu_2\cos\eta}
          {\cosh^2\xi-\cos^2\eta},
\end{align}
where the second equality uses \eqref{eq:conformal_factor} and
\eqref{eq:lambdaU}. On the energy level $\{\tilde H=h\}$, the flow of $\tilde H$ is a time
re-parametrisation of the zero-energy flow of the Hamiltonian
\begin{equation}
  \label{eq:def_K}
  K(p_\xi,p_\eta,\xi,\eta)=\frac{p_\xi^2+p_\eta^2}{2}
    -\mu_1\cosh\xi+\mu_2\cos\eta
    -h\bigl(\cosh^2\xi-\cos^2\eta\bigr),
\end{equation}
and $K$ splits as a sum of two Hamiltonians depending solely on $\xi$ and on
$\eta$ respectively,
\begin{equation}
  \label{eq:def_K1K2}
  K_1(p_\xi,\xi)=\frac12p_\xi^2-W_1(\xi),
  \qquad
  K_2(p_\eta,\eta)=\frac12p_\eta^2+W_2(\eta),
\end{equation}
where
\begin{equation}
  \label{eq:def_W1W2}
  W_1(\xi)=\bigl(\mu_1+h\cosh\xi\bigr)\cosh\xi,
  \qquad
  W_2(\eta)=\bigl(\mu_2+h\cos\eta\bigr)\cos\eta .
\end{equation}
Away from the branch points, the level $\{K=0\}$ corresponds exactly to
the energy shell $\{\tilde H=h\}$ and provides its regularised extension across
the collisions. On $\{K=0\}$, the common value
\begin{equation}
  \label{eq:def_K0}
  K_0:=K_1=-K_2
\end{equation}
is the separation constant, that is, Euler's first integral restricted to
$\{\tilde H=h\}$. We shall repeatedly use the identity
\begin{equation}
  \label{eq:speed_on_shell}
  \tfrac12\bigl(p_\xi^2+p_\eta^2\bigr)=W_1(\xi)-W_2(\eta)=r_1r_2\,(h+U)
  \qquad\text{on }\{K=0\},
\end{equation}
which follows from \eqref{eq:def_K1K2}, \eqref{eq:conformal_factor} and
\eqref{eq:lambdaU}, and whose right-hand side is bounded below by $2m_2>0$.

Let us analyse the regularised dynamics.
The first Hamiltonian has a unique critical point at the origin. Indeed
$\partial_\xi K_1=-\sinh\xi\,(\mu_1+2h\cosh\xi)$ and $\mu_1+2h\cosh\xi>0$, so
$\partial_\xi K_1$ vanishes only for $\xi=0$. A direct computation shows that this critical point is a saddle: in the
coordinates $(p_\xi,\xi)$ the linearization of the field is
\begin{equation*}
  JD^2K_1(0,0)=\begin{pmatrix}0&-1\\1&0\end{pmatrix} \begin{pmatrix}1&0\\0&-(\mu_1+2h)\end{pmatrix} = \begin{pmatrix}0&\mu_1+2h\\1&0\end{pmatrix} 
\end{equation*}
and  has eigenvalues
$\pm\sqrt{\mu_1+2h}$.

For completeness, we also record the critical points of the second Hamiltonian,
although their detailed phase portrait will not be used below. Since
\[
  \partial_\eta K_2=-\sin\eta\,(\mu_2+2h\cos\eta),
\]
the following alternatives occur.
\begin{itemize}
  \item If $0\le h<\mu_2/2$ (hence $\mu_2>0$), the only critical points are
  $(p_\eta,\eta)=(0,\zeta_k)$, where $\zeta_k=\pi k$. They are saddles for even
  $k$ and minima for odd $k$.
  \item If $h=\mu_2/2>0$, the same points occur; the even ones are saddles,
  whereas the odd ones are degenerate minima.
  \item If $h>\mu_2/2$, besides $\zeta_k=\pi k$ there are the critical points
  \[
    \chi_k^\pm=\pm\arccos\!\left(-\frac{\mu_2}{2h}\right)+2\pi k,
    \qquad k\in\mathbb Z.
  \]
  The points $\zeta_k$ are saddles and the points $\chi_k^\pm$ are minima; moreover
  $\zeta_{2k}<\chi_k^+<\zeta_{2k+1}<\chi_{k+1}^-$.
\end{itemize}
In the exceptional case $\mu_2=h=0$, one has $K_2=p_\eta^2/2$, so the whole
circle $\{p_\eta=0\}$ consists of degenerate critical points.

\begin{lemma}
  \label{lemma:monotonicity_xi}
  Any solution $x(t)$ of the $2$-centre problem at energy $h\ge0$ crosses the
  segment $[-e_1,e_1]$ at most once. More precisely, let $K_0$ be its separation
  constant \eqref{eq:def_K0} and set $\bar K_0:=-W_1(0)=-(\mu_1+h)=K_1(0,0)$.
  Then exactly one of the following holds.
  \begin{enumerate}
    \item[(i)] If $K_0>\bar K_0$, then $\xi(t)$ is strictly monotone and vanishes
    exactly once: the solution crosses $(-e_1,e_1)$ once, and $f(x(t))$ has a
    unique critical point there, a strict minimum with $f=1$.
    \item[(ii)] If $K_0<\bar K_0$, then $\xi(t)$ never vanishes and has exactly
    one turning point: the solution does not meet $[-e_1,e_1]$, and $f(x(t))$ has
    a unique critical point.
    \item[(iii)] If $K_0=\bar K_0$, then either $x(t)$ is the
    collision--reflection orbit on $[e_1,-e_1]$, along which $f\equiv1$, or $x(t)$
    lies on one of the branches of the stable or unstable manifold of that orbit,
    along which $f(x(t))$ is strictly monotone and tends to $1$.
  \end{enumerate}
\end{lemma}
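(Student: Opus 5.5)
The plan is to work entirely on the regularised cylinder $\mathcal C$ with the separated Hamiltonian $K=K_1+K_2$. On the shell $\{K=0\}$ the $\xi$-motion is governed by the one-degree-of-freedom system $K_1(p_\xi,\xi)=\tfrac12p_\xi^2-W_1(\xi)$ at level $K_0$. This motion is decoupled from $\eta$ after the time reparametrisation, and reparametrisation does not affect monotonicity or the number of zeros and turning points of $\xi$. Since $f(x)=\tfrac12(r_1+r_2)=\cosh\xi$ by \eqref{eq:function_ellipse}, we have $\tfrac{d}{ds}f=\sinh\xi\,\xi'$. Hence the critical points of $f$ along the trajectory are exactly the zeros of $\xi$ together with the zeros of $p_\xi=\xi'$. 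Moreover, meeting $[-e_1,e_1]$ is equivalent to $\xi=0$, since $\{\xi=0\}$ double covers the segment. The whole lemma therefore reduces to the phase portrait of $K_1$, which has a single saddle at the origin with energy $\bar K_0=-W_1(0)=-(\mu_1+h)$. Recall that $W_1$ is even and strictly increasing in $|\xi|$, because $W_1'(\xi)=\sinh\xi(\mu_1+2h\cosh\xi)$ has the sign of $\xi$ when $h\ge0$, and $W_1\to+\infty$.

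In case (i), $K_0>\bar K_0$, we have $\tfrac12p_\xi^2=K_0+W_1(\xi)\ge K_0+W_1(0)>0$. So $p_\xi$ never vanishes, $\xi$ is strictly monotone, and $\xi$ is unbounded in both time directions. The unboundedness comes from the lower bound $|p_\xi|\ge c>0$ in the regularised time, provided the regularised time is complete, i.e. defined on all of $\mathbb R$. Therefore $\xi$ has exactly one zero. The only critical point of $f=\cosh\xi$ is at $\xi=0$, and it is a strict minimum with value $\cosh 0=1$, since $\cosh\xi$ is strictly decreasing then strictly increasing along a monotone $\xi$.

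In case (ii), $K_0<\bar K_0$, the set $\{\xi=0\}$ is forbidden because there $\tfrac12p_\xi^2=K_0+W_1(0)<0$. Hence $\xi$ keeps a fixed sign, say $\xi>0$, and the orbit is confined to $\xi\ge\xi_*$, where $W_1(\xi_*)=-K_0$ defines the unique positive turning point. The level curve $\{K_1=K_0,\ \xi>0\}$ is a single branch of a hyperbola-like curve crossing $p_\xi=0$ once, and it is transversal there since $W_1'(\xi_*)\neq0$. So $\xi$ has exactly one turning point, and $f=\cosh\xi$ has exactly one critical point, at that point.

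In case (iii), $K_0=\bar K_0$, the level set is the union of the saddle $(0,0)$ and its four separatrix branches. If the $\xi$-component sits at the saddle, then $\xi\equiv0$, so $x(t)$ stays on the segment and is the collision--reflection orbit, with $f\equiv1$. Otherwise $(\xi,p_\xi)$ travels along a separatrix: $\xi$ is strictly monotone, has a fixed nonzero sign, and $\xi\to0$ in one time direction only asymptotically. Then $f$ is strictly monotone and tends to $1$, and the orbit lies on the stable or unstable manifold of the periodic orbit, understood as the set of orbits whose $\xi$-component is asymptotic to $0$. In all three cases the segment is crossed at most once.

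The step I expect to require care is the global time issue. We must check that the regularised flow of $K$ is complete and that the reparametrisation $dt=r_1r_2\,ds$ maps the full $s$-line onto the maximal physical time interval, including passage through collisions as regular points. Completeness holds because $\eta$ lives on a circle and $\xi$ has at most exponential growth of $|p_\xi|$, so solutions do not blow up in finite $s$. It then remains to confirm that monotonicity and critical-point counts transfer to $x(t)$ through the two-to-one map $\Phi$. This transfer is fine because $f\circ\Phi=\cosh\xi$ is invariant under the deck involution $\mathfrak d$.
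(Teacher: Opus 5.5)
Your three-case analysis is the paper's proof: you read everything off the phase portrait of $K_1$ through $p_\xi^2=2(K_0+W_1(\xi))$, and then use $f=\cosh\xi$ to locate the critical points of $f$. There is one false step, and it is the one you flagged yourself. The regularised flow of $K$ is \emph{not} complete; the paper itself remarks that it is incomplete. On the shell, $|p_\xi|\sim\sqrt{h/2}\,e^{|\xi|}$ for $h>0$ and $|p_\xi|\sim\sqrt{\mu_1}\,e^{|\xi|/2}$ for $h=0$. Hence $\int^{\infty}d\xi/|p_\xi|<\infty$, and $|\xi|$ reaches $+\infty$ in finite regularised time. This happens at both ends in cases (i) and (ii), and on the escaping end of the separatrices in (iii). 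Growth of $|p_\xi|$ that is exponential in $\xi$ is exactly what causes the blow-up, since $\dot\xi=p_\xi$ is then superlinear in $\xi$. So two of your statements are false: the justification in case (i) (``provided the regularised time is complete''), and the claim that the full $s$-line maps onto the physical time axis.

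Completeness is not needed, though. Let $(s_-,s_+)$ be the maximal interval of the regularised solution.
\begin{enumerate}
\item[(a)] On $\{K=0\}$ one has $\frac12(p_\xi^2+p_\eta^2)=W_1(\xi)-W_2(\eta)$ and $\eta\in\mathbb S^1$. Hence the sets $\{|\xi|\le R\}\cap\{K=0\}$ are compact. A maximal solution must leave each of them, so $|\xi|\to\infty$ at every finite endpoint.
\item[(b)] In case (i), at an infinite endpoint, the bound $|\dot\xi|\ge\sqrt{2(K_0-\bar K_0)}$ gives the same conclusion.
\item[(c)] It follows that in case (i) the monotone function $\xi$ has range $\mathbb R$ and vanishes exactly once.
\item[(d)] The same escape argument shows that in (ii) and (iii) the $(\xi,p_\xi)$-orbit traverses the whole connected branch of the level curve. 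This gives exactly one turning point in (ii), and the asymptotic approach to $\xi=0$ in (iii).
\item[(e)] Since $r_1r_2\sim\frac14e^{2|\xi|}$, the physical time $t=\int r_1r_2\,ds$ diverges at each finite endpoint. So it is the maximal $s$-interval, not all of $\mathbb R$, that corresponds to the whole physical time line.
\end{enumerate}
With this replacement your argument is complete.
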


\begin{proof}
  By \eqref{eq:def_K1K2} and \eqref{eq:def_W1W2},
  $W_1'(\xi)=\sinh\xi\,(\mu_1+2h\cosh\xi)$, and since $\mu_1+2h\cosh\xi>0$ the
  function $W_1$ is strictly decreasing on $(-\infty,0)$, strictly increasing on
  $(0,+\infty)$, attains its minimum $W_1(0)=-\bar K_0$ only at $\xi=0$, and
  tends to $+\infty$ as $\vert\xi\vert\to\infty$. On $\{K_1=K_0\}$ we have
  \begin{equation}
    \label{eq:pxi_squared}
    p_\xi^2=2\bigl(K_0+W_1(\xi)\bigr).
  \end{equation}
  Thus, the possible zeros of $p_\xi$ on the level $\{K_1=K_0\}$ occur at
  the solutions of $W_1(\xi)=-K_0$. This equation has, respectively, no
  solution, two solutions, or the single solution $\xi=0$, according as
  $K_0>\bar K_0$, $K_0<\bar K_0$, or $K_0=\bar K_0$.

  If $K_0>\bar K_0$ the right-hand side of \eqref{eq:pxi_squared} is bounded
  below by $2(K_0-\bar K_0)>0$, so $p_\xi$ has constant sign, and $\xi$ is strictly monotone along the
  maximal orbit and vanishes exactly once. If $K_0<\bar K_0$ the two
  turning points are $\pm\xi_*$ with $\xi_*>0$, lying in the two distinct
  components of $\{W_1\ge-K_0\}$; each orbit is confined to one of them and has a
  single turning point. If $K_0=\bar K_0$ the level set consists of the
  equilibrium $(0,0)$ together with its four separatrix branches, along which
  $\xi$ is strictly monotone and tends to $0$ without attaining it.

  The assertions about $f$ follow from \eqref{eq:function_ellipse}: since $\cosh$ is
  even with a strict minimum at the origin, a critical point of
  $f(x(t))=\cosh\xi(t)$ occurs either where $\dot\xi=0$ or where $\xi=0$. The
  last claim of (iii) holds because on the separatrices $\xi\to0$, whence
  $f\to1$.
\end{proof}

We will be mainly working on the open set $\{\xi>0\}\subset \mathcal{C}$, on which the flow of $K$ is smooth, albeit incomplete.

Using the deck involution defined in \eqref{eq:deck}, it is however possible to define a full regularization of the 2-centre problem. Let us observe that $d \mathfrak{d}$  acts \emph{freely} on
$\{K=0\}\subset T^*\mathcal{C}$ for any $h\ge0$.
Indeed, a fixed point of $d{\mathfrak{d}}$ must have $p_\xi=p_\eta=0$ and lie  over a branch point. But by \eqref{eq:speed_on_shell} one has
$\tfrac12(p_\xi^2+p_\eta^2)=W_1-W_2$, which equals $2m_2$ at $(0,0)$ and
$2m_1$ at $(0,\pi)$, hence is strictly positive.

We can thus give the following definition of \textit{regularized phase space and flow}.
\begin{definition}[Regularised phase space and flow]
	\label{def:Psi}
	For $h\ge0$ set
	\begin{equation}
		\label{eq:def_Sigma_hat}
		\widehat\Sigma_h:=\{K=0\}\big/{d \mathfrak{d}} .
	\end{equation}
\end{definition}

By \eqref{eq:speed_on_shell}, $(p_\xi,p_\eta)\neq(0,0)$ on $\{K=0\}$,
so $\{K=0\}$ is a real-analytic three-dimensional hypersurface on which $d\mathfrak{d}$ acts freely, so $\widehat\Sigma_h$ is a
real-analytic three-dimensional manifold and the definition is well posed.

We conclude this section with some remarks. First of all, away from the centres, the flow of $K$, its projection onto $\widehat \Sigma_h$ and the original 2-centre flow are essentially equivalent. Indeed, up to the time change
\begin{equation}
	\label{eq:time_change}
	dt_{\mathrm{phys}}=r_1r_2\,dt 
\end{equation}
and, possibly up to an identification under $\mathfrak{d}$, the solutions to these systems are all the same.

We will always refer to the time in the $(\xi,\eta)$ coordinates as \textit{regularised time} and to that of the original Hamiltonian system as \textit{physical time}.

The space $\widehat \Sigma_h$ is essentially a blow-up of the energy surface of $H$ as given in \eqref{eq:H}.
What is added as fiber over  $\pm e_1$ is a circle
$\mathcal{Z}_\pm$.

\subsection{The collision--reflection orbit and the fields $X_\pm^h$}

Throughout this subsection $h\ge0$ is fixed, and we work with the Hamiltonian flow of $K$ given in \eqref{eq:def_K} for which $\dot\xi=p_\xi$ and
$\dot\eta=p_\eta$ on the cylinder $\mathcal{C}$. We begin by isolating the periodic orbits around
which the whole construction is built.

\begin{prop}
  \label{prop:collision_orbit}
  For every $h\ge0$ the Hamiltonian $K_1$ has a unique equilibrium,
  $(p_\xi,\xi)=(0,0)$, and it is a hyperbolic saddle with eigenvalues
  $\pm\lambda$, where
  \begin{equation}
    \label{eq:def_lambda}
    \lambda:=\sqrt{\mu_1+2h}.
  \end{equation}
   It corresponds to two orbits $\Gamma_h$ and $\Gamma_h^{-1}$, the only along which $\xi$ is
  constant. $\Gamma_h$ and $\Gamma^{-1}_h(t)=\Gamma_h(-t)$  correspond to generalized solutions of  \eqref{eq:2centres} that span the segment $[-e_1,e_1]$. Moreover, their stable and unstable manifolds satisfy
  \begin{equation}
    \label{eq:manifolds_level_set}
    W^s(\Gamma_h)\cup W^u(\Gamma_h)\cup     W^s(\Gamma_h^{-1})\cup W^u(\Gamma_h^{-1})
    =\left\{K_0=\bar K_0\right\},
    \qquad \bar K_0=-(\mu_1+h).
  \end{equation}
  In particular, through every
  $x\in\mathbb{R}^2\setminus[-e_1,e_1]$ there pass exactly two orbits asymptotic
  to $\Gamma_h$ and $\Gamma_h^{-1}$ as $t\to+\infty$, and exactly two orbits asymptotic to $\Gamma_h$ and $\Gamma_h^{-1}$  as
  $t\to-\infty$.
\end{prop}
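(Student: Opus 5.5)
The plan is to reduce everything to the separated structure \eqref{eq:def_K1K2} on the zero level $\{K=0\}$.

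\emph{Step 1: the saddle.} The equilibrium claim is already contained in the computation preceding Lemma~\ref{lemma:monotonicity_xi}. Since $\partial_\xi K_1=-\sinh\xi(\mu_1+2h\cosh\xi)$ and the second factor is positive, the only zero is $\xi=0$, and $\partial_{p_\xi}K_1=p_\xi$. The linearisation has eigenvalues $\pm\sqrt{\mu_1+2h}=\pm\lambda$.

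\emph{Step 2: the orbits with constant $\xi$.} If $\xi$ is constant along an orbit of $K$, then $p_\xi=\dot\xi\equiv0$. Hence $\dot p_\xi=-\partial_\xi K_1=0$, which forces $\xi\equiv0$, and then $K_0=K_1(0,0)=\bar K_0$. On $\{K=0\}$ this gives $K_2=\mu_1+h$, that is
\[
\tfrac12p_\eta^2=\mu_1+h-W_2(\eta)=W_1(0)-W_2(\eta)>0
\]
by \eqref{eq:speed_on_shell}. So $p_\eta$ never vanishes and has a fixed sign. The $\eta$-motion is therefore a periodic rotation around $\mathbb S^1$, and the two signs give $\Gamma_h$ and its time reversal $\Gamma_h^{-1}$. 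Under $\Phi$ the circle $\{\xi=0\}$ double covers $[-e_1,e_1]$, passing through the branch points $\eta=0,\pi$. These are the collisions, so the projection is a collision--reflection (generalised) solution spanning the segment. I would also note that $d\mathfrak d$ maps $\Gamma_h$ to itself, shifted by half a period, so the two orbits remain distinct in $\widehat\Sigma_h$.

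\emph{Step 3: the invariant manifolds.} $\Gamma_h$ is a normally hyperbolic periodic orbit of the product flow. Its stable and unstable manifolds are the products of the separatrices of the $K_1$-saddle with the $\eta$-motion, restricted to $K=0$. Concretely, an orbit is asymptotic to $\Gamma_h^{\pm1}$ if and only if $(p_\xi,\xi)\to(0,0)$. Since $K_1$ is conserved, this forces $K_0=\bar K_0$. Conversely, Lemma~\ref{lemma:monotonicity_xi}(iii) shows that every orbit with $K_0=\bar K_0$ is either $\Gamma_h^{\pm1}$ or lies on a separatrix branch with $\xi\to0$ as $t\to+\infty$ or $t\to-\infty$. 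On that branch $p_\eta^2=2(\mu_1+h-W_2)+O(p_\xi^2)$ stays bounded away from zero. So the sign of $p_\eta$ is eventually constant, and the orbit converges to $\Gamma_h$ or $\Gamma_h^{-1}$ according to that sign. This yields \eqref{eq:manifolds_level_set}.

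\emph{Step 4: counting.} Fix $x\notin[-e_1,e_1]$ and choose the lift with $\xi>0$; the other lift is its $\mathfrak d$-image. The separatrix branches with $\xi>0$ satisfy $p_\xi=\mp\sqrt{2(W_1(\xi)-W_1(0))}$, with the minus sign for stable and the plus sign for unstable, and this value is nonzero. The constraint $K=0$ fixes $p_\eta^2=2(\mu_1+h-W_2(\eta))>0$, leaving exactly two choices $p_\eta=\pm\sqrt{\cdot}$. Each choice is asymptotic to one of $\Gamma_h$, $\Gamma_h^{-1}$. This gives exactly two forward-asymptotic orbits and, symmetrically, two backward-asymptotic ones.

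The main obstacle is this last identification. One must check that the two $p_\eta$ signs really go to \emph{different} limit orbits, rather than both to $\Gamma_h$. I would argue that $p_\eta$ cannot change sign along the separatrix, because $\frac12p_\eta^2=-K_0-W_2\geq \mu_1+h-\max W_2=\mu_1-\mu_2+(h-h\cos^2\eta)$, which is at least $2m_2>0$. So each sign is preserved along the whole orbit and selects the orientation of the limit cycle.
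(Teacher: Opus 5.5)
Your argument is correct and follows the paper's proof: the saddle of $K_1$; $p_\xi\equiv0$ forcing $\xi\equiv0$; the identification of $\{K_0=\bar K_0\}$ with the separatrices times the two $\eta$-circles; and the count by the signs of $p_\xi$ and $p_\eta$ on the sheet $\{\xi>0\}$. You add more detail on the $(\eta,p_\eta)$ factor, which the paper leaves implicit.

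One aside in Step 2 is false and should be deleted. Since $d\mathfrak d(\xi,\eta,p_\xi,p_\eta)=(-\xi,-\eta,-p_\xi,-p_\eta)$ reverses the sign of $p_\eta$, it maps $\Gamma_h$ onto $\Gamma_h^{-1}$, not onto itself with a half-period shift. Hence the two orbits are identified in $\widehat\Sigma_h$. This is as it must be: the bouncing motion on $[-e_1,e_1]$ is its own time reversal. The proposition lives on $\{K=0\}\subset T^*\mathcal C$, where the two orbits are distinct, so nothing else in your proof depends on this remark.

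Also, on $\{K_0=\bar K_0\}$ the identity $\tfrac12p_\eta^2=\mu_1+h-W_2(\eta)$ is exact, by conservation of $K_2=-K_0$. The $O(p_\xi^2)$ term in Step 3 is therefore superfluous, as your final paragraph already implicitly uses.
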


\begin{proof}
  Uniqueness of the equilibrium and the hyperbolicity of the linearization of $K$ were
  already discussed in Section \ref{sec:two_centres}.

  The segment $[-e_1,e_1]$ lifts to the circle $\{0\}\times \mathbb{S}^1$ and conversely, if  $\xi$ is constant, $p_\xi=0$ and $\dot p_\xi=W_1'(\xi)=0$. Since $W_1'$ vanishes only at
  $\xi=0$, the two conditions uniquely identify $\Gamma_h$ and $\Gamma_h^{-1}$.
  
   Identity
  \eqref{eq:manifolds_level_set} holds because $K_0$ is a first integral taking
  the value $\bar K_0=K_1(0,0)$ on $\Gamma_h$, and because on that level the
  $(\xi,p_\xi)$ dynamics consists of the equilibrium together with the four
  separatrix branches of the saddle.

  Finally, fix $x\in\mathbb{R}^2\setminus[-e_1,e_1]$ and let $(\xi,\eta)$ with
  $\xi>0$ be the lift under $\Phi$. On the level $K_0=\bar K_0$, the
  $\xi$-momentum has the two signs
  \[
    p_\xi=\pm\sqrt{2\bigl(W_1(\xi)-W_1(0)\bigr)},
  \]
  while $p_\eta$ has the two non-zero signs
  \[
    p_\eta=\pm\sqrt{2\bigl(W_1(0)-W_2(\eta)\bigr)}.
  \]
  The choice $p_\xi<0$ gives the two orbits asymptotic in the future and the choice
  $p_\xi>0$ the two orbits asymptotic in the past; the sign of $p_\eta$ distinguishes the
  two orientations of $\Gamma_h$.
  Note that the $\xi$ coordinate of these solutions never vanishes. Thus,  if we had instead chosen the lift with $\xi<0$, we would have found lifts with $\xi<0$.
\end{proof}

We can now define two vector fields on $\mathbb{R}^2\setminus[-e_1,e_1]$. Since the elliptic-hyperbolic coordinates given in \eqref{eq:elliptic_coordinates} are one-to-one on the set $\{\xi>0\}\subset \mathcal{C}$, we can work directly on the latter space, with the $(\xi,\eta)$ coordinates. By Proposition~\ref{prop:collision_orbit}, exactly
two orbits through $(\xi,\eta)$ are asymptotic to $\Gamma_h$ in the future. We define
$X^h_+$ and $X^h_-$ to be their regularised coordinate velocities; the sign $\pm$ records the direction of winding. 
Explicitly, solving
$K_1=\bar K_0$ and $K_2=-\bar K_0$ for $p_\xi$ and $p_\eta$, we find 
\begin{equation}
  \label{eq:def_Xpm}
  X^h_\pm=\begin{pmatrix}-a(\xi)\\[2pt] \pm b(\eta)\end{pmatrix},
  \qquad
  \begin{aligned}
    a(\xi)&:=\sqrt{2}\sqrt{W_1(\xi)-(\mu_1+h)}\,,\\
    b(\eta)&:=\sqrt{2}\sqrt{\mu_1+h-W_2(\eta)}\,,
  \end{aligned}
\end{equation}
with $W_1,W_2$ as in \eqref{eq:def_W1W2}.
We remark that, after time reversal, the fields $-X^h_+$ and $-X^h_-$
are proportional to the velocities of the two orbits asymptotic to $\Gamma_h$ in the past.

Two elementary facts about \eqref{eq:def_Xpm} will be used. First, $b$ never
vanishes: writing $u=\cos\eta\in[-1,1]$,
\begin{equation}
  \label{eq:pos_peta}
  b(\eta)^2=2(\mu_1+h-\bigl(\mu_2+hu\bigr)u)
  =2(\mu_1-\mu_2u+h\bigl(1-u^2\bigr))\ \ge\ 2(\mu_1-\mu_2)\ =\ 4m_2\ >\ 0 ,
\end{equation}
so along every leaf of $W^{s}(\Gamma_h)\cup W^{u}(\Gamma_h)$ the coordinate
$\eta$ is strictly monotone, and the number of windings around $[-e_1,e_1]$ is
well defined. Second, $a(\xi)$ vanishes precisely for $\xi=0$, that is,
precisely over the segment $[-e_1,e_1]$. In fact
\begin{equation}
  \label{eq:factorisation}
  a(\xi)^2=2(W_1(\xi)-\mu_1-h)
  =2\bigl(\cosh\xi-1\bigr)\Bigl(h\bigl(\cosh\xi+1\bigr)+\mu_1\Bigr)
\end{equation}
vanishes if and only if
$\cosh\xi=1$, that is $\xi=0$.
\begin{figure}
\centering
\begin{tikzpicture}
\begin{axis}[
    axis equal image,
    xmin=-1.65, xmax=1.65,
    ymin=-1.20, ymax=1.20,
    axis lines=middle,
    xlabel={$x_1$},
    ylabel={$x_2$},
    xtick={-1,0,1},
    ytick=\empty,
    samples=900,
    clip=false,
]

%
%

\def\phase{0.35}

\addplot[
    very thick,
    no marks,
    domain=1:20,
    variable=\t,
]
(
    {cosh(1/\t)*cos(deg(\t+\phase))},
    {sinh(1/\t)*sin(deg(\t+\phase))}
);

\addplot[
    very thick,
    dashed,
    no marks,
    domain=1:20,
    variable=\t,
]
(
    {cosh(1/\t)*cos(deg(-\t-\phase))},
    {sinh(1/\t)*sin(deg(-\t-\phase))}
);

\foreach \ta/\tb in {1.55/1.82,3.10/3.38,5.25/5.55,8.20/8.55,12.00/12.40,16.00/16.45}{
  \addplot[
      very thick,
      -{Stealth[length=3.0mm,width=2.2mm]},
      no marks,
      domain=\ta:\tb,
      variable=\t,
  ]
  (
      {cosh(1/\t)*cos(deg(\t+\phase))},
      {sinh(1/\t)*sin(deg(\t+\phase))}
  );
}

\foreach \ta/\tb in {1.55/1.82,3.10/3.38,5.25/5.55,8.20/8.55,12.00/12.40,16.00/16.45}{
  \addplot[
      very thick,
      dashed,
      -{Stealth[length=3.0mm,width=2.2mm]},
      no marks,
      domain=\ta:\tb,
      variable=\t,
  ]
  (
      {cosh(1/\t)*cos(deg(-\t-\phase))},
      {sinh(1/\t)*sin(deg(-\t-\phase))}
  );
}

\addplot[
    black,
    line width=1.4pt
]
coordinates {(-1,0) (1,0)};

\addplot[
    only marks,
    mark=*,
    mark size=1.6pt
]
coordinates {(-1,0) (1,0)};


\node[anchor=west] at (axis cs:1.07,0.64) {$X_+^h$};
\node[anchor=west] at (axis cs:1.07,-0.64) {$X_-^h$};

\end{axis}
\end{tikzpicture}
\caption{$\Gamma_h$ and the fields $X^h_\pm$ in the $(x_1,x_2)$ plane}
\end{figure}

From \eqref{eq:def_Xpm} and \eqref{eq:factorisation} we get, on the chosen
sheet $\{\xi>0\}$,
\begin{equation}
  \label{eq:sum_Xpm}
  X^h_++X^h_-=-2\,a(\xi)\begin{pmatrix}1\\0\end{pmatrix}.
\end{equation}
Since $\sinh\xi>0$ on this sheet, the vector in
\eqref{eq:sum_Xpm} is nowhere vanishing and is always parallel to the gradient of $f\circ \Phi$. 

\begin{lemma}
  \label{lemma:critical_point}
  Let $\gamma\colon\mathbb{S}^1\to\mathbb{R}^2\setminus[-e_1,e_1]$ be a closed
  simple curve of class $\mathcal{C}^1$ which is not an ellipse with foci $-e_1$
  and $e_1$, and let
  \begin{equation}
    \label{eq:def_f}
    f(\theta)=\frac12\Bigl(\vert\gamma(\theta)-c_1\vert
                          +\vert\gamma(\theta)-c_2\vert\Bigr).
  \end{equation}
  Then:
  \begin{enumerate}
    \item[(i)] $f$ is non-constant, hence it has at least two critical points, a
    maximum and a minimum; in particular there exist $\theta_-,\theta_+$ with
    $f'(\theta_-)f'(\theta_+)<0$;
    \item[(ii)] $\theta$ is a critical point of $f$ if and only if
    \[
    \bigl\langle d\Phi\left(
    X^h_+(\Phi^{-1}\gamma(\theta))+X^h_-(\Phi^{-1}\gamma(\theta))\right),
    \dot{\gamma}(\theta)\bigr\rangle=0;\]
    \item[(iii)] this happens if and only if the orbit reaching $\gamma(\theta)$
    with velocity $-d\Phi (X^h_+)$, which is asymptotic to
    $\Gamma_h$ in the past, is reflected by $\gamma$ at $\gamma(\theta)$ into
    the orbit leaving $\gamma(\theta)$ with velocity
    $d \Phi (X^h_-)$, which is asymptotic to $\Gamma_h$ in the future.
  \end{enumerate}
  If, moreover, $\gamma$ is real-analytic, the critical points of $f$ are finitely
  many and isolated.
\end{lemma}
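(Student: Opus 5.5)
For (i), I use that $f=\cosh\xi\circ\Phi^{-1}$ on $\{\xi>0\}$, by \eqref{eq:function_ellipse}. If $f$ were constant, say $f\equiv c$, then $\gamma$ would lie in the level set $\{\cosh\xi=c\}$, which is the confocal ellipse $E_c$. Since $\gamma$ avoids the segment, we have $c>1$. A closed simple $\mathcal C^1$ curve contained in the simple closed curve $E_c$ must coincide with it: its image is compact and connected, and an injective continuous map of $\mathbb S^1$ into $\mathbb S^1$ is onto. This contradicts the hypothesis, so $f$ is non-constant. As $f$ is $\mathcal C^1$ on the compact $\mathbb S^1$, it attains a maximum and a minimum at distinct points, which are critical points. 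Moreover $f$ takes values strictly between its minimum and maximum, so there are points where $f'>0$ and points where $f'<0$ (going around the circle from the min to the max and back). This gives $\theta_\pm$ with $f'(\theta_-)f'(\theta_+)<0$.

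For (ii), I write $f'(\theta)=\langle\nabla(\cosh\xi\circ\Phi^{-1}),\dot\gamma\rangle$. By \eqref{eq:sum_Xpm}, $X^h_++X^h_-=-2a(\xi)\partial_\xi$ with $a(\xi)>0$ on $\{\xi>0\}$. Because $\Phi$ is conformal, $d\Phi(\partial_\xi)$ is a positive multiple of the Euclidean gradient of $\xi\circ\Phi^{-1}$, and hence of $\nabla f$, since $\sinh\xi>0$. Therefore $d\Phi(X^h_++X^h_-)=-\kappa\nabla f$ with $\kappa>0$, and the equivalence is immediate.

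For (iii), I use the elastic reflection law at $\gamma(\theta)$: an incoming velocity $v_{\rm in}$ is sent to $v_{\rm out}$ iff $|v_{\rm in}|=|v_{\rm out}|$ and $v_{\rm out}-v_{\rm in}\perp\dot\gamma(\theta)$. Here $v_{\rm in}=-d\Phi(X^h_+)$ and $v_{\rm out}=d\Phi(X^h_-)$. These have the same Euclidean norm, because by \eqref{eq:def_Xpm} $X^h_\pm$ differ only in the sign of the $\eta$-component and $\Phi$ is conformal; alternatively, both lie on the energy shell through the same point, which fixes the physical speed after the time change \eqref{eq:time_change}. Then $v_{\rm out}-v_{\rm in}=d\Phi(X^h_++X^h_-)$, so the reflection condition is exactly (ii). I also check the dynamical interpretation. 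The orbit with velocity $-d\Phi(X^h_+)$ is the time reversal of a future-asymptotic orbit, so it is past-asymptotic to $\Gamma_h$ (possibly with reversed orientation; I would track the sign of $\dot\eta$ to confirm it is $\Gamma_h$ rather than $\Gamma_h^{-1}$, consistent with the remark after \eqref{eq:def_Xpm}). The outgoing orbit is future-asymptotic by definition of $X^h_-$.

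For the analytic addendum, if $\gamma$ is real-analytic then $f$ is real-analytic on $\mathbb S^1$, because $|\gamma-c_i|>0$. By (i) $f'$ is a non-identically-zero analytic function on a compact circle, so its zeros are isolated and therefore finite. The only genuinely delicate point in the whole proof is the topological step in (i) and the orientation bookkeeping in (iii); the rest is direct.
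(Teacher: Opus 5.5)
Your proof is correct and follows the same route as the paper. Non-constancy comes from the fact that a simple closed curve lying on a confocal ellipse must be that ellipse; the identity $X^h_++X^h_-=-2a(\xi)\partial_\xi$ makes the condition in (ii) equivalent to $f'(\theta)=0$; and the reflection law reduces to (ii) via equal lengths and conformality of $\Phi$. You phrase (ii) in the physical plane through $d\Phi(\partial_\xi)\parallel\nabla f$, whereas the paper computes with $\tilde f=\cosh\xi$ in the $(\xi,\eta)$-plane; the difference is only cosmetic. The orientation check you leave open is immediate: $-X^h_+=(a,-b)$ and $X^h_-=(-a,-b)$ share the $\eta$-component $-b<0$, which never vanishes along these separatrices since $b^2\ge 4m_2$, so both orbits are asymptotic to the same cycle on $\{\xi=0\}$.
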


\begin{proof}
  (i) If $f$ were constant, say $f\equiv c$, then $\gamma$ would be contained in
  the level set $\{f=c\}$, which by \eqref{eq:function_ellipse}, since $2f=r_1+r_2$, is the ellipse with foci
  $c_1,c_2$ through any of its points; being a simple closed curve, $\gamma$
  would coincide with it, against the hypothesis. Hence $f$ is a
  non-constant $\mathcal C^1$ function on $\mathbb{S}^1$, so it attains a
  maximum and a minimum at distinct points, both critical. 

  (ii) Since $\gamma$ takes values in $\mathbb{R}^2\setminus[-e_1,e_1]$ and $\Phi$
  restricts to a diffeomorphism from $\{\xi>0\}$ onto that set, the lift
  $\tilde\gamma=\Phi^{-1}\circ\gamma$ is well defined. Set
  $\tilde f=f\circ\Phi=\cosh\xi$. Then
  \[
    f'(\theta)
      =d\tilde f_{\tilde\gamma(\theta)}\bigl(\dot{\tilde\gamma}(\theta)\bigr)
      =\bigl\langle\nabla_{\xi,\eta}\tilde f,
                     \dot{\tilde\gamma}(\theta)\bigr\rangle .
  \]
  Hence $f'(\theta)=0$ if and only if
  $\langle\nabla_{\xi,\eta}\tilde f,\dot{\tilde\gamma}(\theta)\rangle=0$,
  which by \eqref{eq:sum_Xpm} is equivalent to the condition in (ii).

  (iii) At $\tilde\gamma(\theta)$, set
  $v_{\mathrm{in}}=-X^h_+$ and $v_{\mathrm{out}}=X^h_-$.  The two vectors have the same length and
  $v_{\mathrm{out}}-v_{\mathrm{in}}=X^h_-+X^h_+$ is orthogonal to $\dot{\tilde\gamma}(\theta)$
  precisely when the condition in (ii) holds.
  This says exactly
  that, at critical points of $f$, $v_{\mathrm{out}}$ is the mirror image of $v_{\mathrm{in}}$ across the
  tangent line to $\tilde\gamma$, that is, that the two orbits are exchanged by
  the elastic reflection. 
  
  Note that equality of lengths and orthogonality to the
  tangent are conformally invariant conditions at a fixed point. Since $\Phi$ is
  conformal by \eqref{eq:euclidean_elliptic}, they may therefore be checked either
  in the $(\xi,\eta)$ variables with the Euclidean metric, as above, or in the
  original coordinates.

  The last assertion is the standard fact that a non-constant real-analytic
  function on $\mathbb{S}^1$ has finitely many zeros of its derivative.
\end{proof}

\begin{remark}
  \label{rem:first_order_contact}
  Geometrically, the critical points of $f$ are precisely the points at
  which $\gamma$ has first-order contact with a member of the confocal family
  $E_c=\{x:\ f(x)=c\}$ (cf. Figure \ref{fig:inner_ellipse}). If $\gamma(\mathbb S^1)$ is itself a confocal ellipse,
  every point is a contact point of infinite order. If instead $\gamma$ is
  real-analytic and its image is not a confocal ellipse, every contact point has
  finite order and the contact points are isolated.
\end{remark}

\begin{figure}\label{fig:inner_ellipse}
\centering

\begin{tikzpicture}
\begin{axis}[
    axis equal image,
    axis lines=middle,
    xlabel={$x_1$},
    ylabel={$x_2$},
    xmin=-4.4, xmax=2.9,
    ymin=-3.4, ymax=3.4,
    xtick={-1,0,1},
    ytick=\empty,
    samples=900,
    domain=0:6.28318530718,
    clip=false,
]

\def\xibar{1}

\addplot[
    very thick,
    no marks,
    variable=\t,
]
(
    {cosh(\xibar + sin(deg(\t/2))^2) * cos(deg(\t))},
    {sinh(\xibar + sin(deg(\t/2))^2) * sin(deg(\t))}
);

\addplot[
    thick,
    dashed,
    no marks,
    variable=\t,
]
(
    {cosh(\xibar) * cos(deg(\t))},
    {sinh(\xibar) * sin(deg(\t))}
);

\addplot[
    black,
    line width=1.5pt
]
coordinates {(-1,0) (1,0)};

\addplot[
    only marks,
    mark=*,
    mark size=1.8pt
]
coordinates {(-1,0) (1,0)};


\addplot[
    only marks,
    mark=*,
    mark size=1.7pt
]
coordinates {({cosh(\xibar)},0)};

\node[anchor=south east] at (axis cs:-2.25,1.35) {$\partial\Omega$};
\node[anchor=south west] at (axis cs:0.62,0.12) {$E_{\bar f}$};

\end{axis}
\end{tikzpicture}
\caption{The inner confocal ellipses touching the boundary $\partial\Omega$ at a minimum point of $f$}
\end{figure}

\section{The billiard map}
\label{sec:billiard_map}

Before constructing the symbolic dynamics, we record the basic facts about the billiard map of 2-centre billiards in bounded domains $\Omega$. In contrast with Birkhoff billiards in strictly convex domains, solutions of the 2-centre problem starting from a boundary point may never return to $\partial \Omega$ or do so tangentially.  
	
In this section we will discuss these features. In particular, we will focus on the role of the stable manifold of 
$\Gamma_h$, the hyperbolic orbit appearing in Proposition \ref{prop:collision_orbit}, and discuss some geometric conditions  to prevent trajectories from returning to the boundary tangentially, such as 
strict $g_h$-convexity. This will later provide a convenient sufficient
condition for global well-posedness of the billiard map.  The symbolic-dynamics
construction of Section~\ref{sec:approximation}, however, does not require this
convexity assumption.

Throughout, $h\ge0$ is fixed, $\Omega$ is bounded with $[-e_1,e_1]\subset \Omega\setminus \partial \Omega$
and $\partial\Omega$ is of class $\mathcal{C}^1$. We work in the elliptic-hyperbolic coordinates $(\xi,\eta)$ and identify $\mathbb{R}^2\setminus[-e_1,e_1]$ with $\{\xi>0\}$.
We denote the flow of $K$, the Hamiltonian function defined in \eqref{eq:def_K}, by $\Psi^t$ and refer to the latter as the \textit{regularised flow}.

We denote by $M_h$ the set of inward-pointing velocities based at a point of $\partial \Omega$ and having energy $h$. As for Birkhoff billiards, we define a parametrization of $M_h$ using the angle with the normal to $\partial \Omega$.

For $\theta\in\mathbb{S}^1$ let $N(\theta)$ be the inward unit normal to
$\partial\Omega$ at $\gamma(\theta)$, $\nu(\theta)=-N(\theta)$ the outward one,
and $T(\theta)=\dot\gamma(\theta)/\vert\dot\gamma(\theta)\vert$ the unit tangent.

\begin{definition}
  \label{def:theta_alpha}
  For an inward velocity $v$ at $\gamma(\theta)$ let
  $\alpha\in(-\tfrac\pi2,\tfrac\pi2)$ be defined by
  \begin{equation}
    \label{eq:def_alpha}
    \frac{v}{\vert v\vert}=\cos\alpha\ N(\theta)+\sin\alpha\ T(\theta) .
  \end{equation}
  This identifies the set of inward data on the boundary having energy $h$  with the open cylinder
  \[
    M_h\ \cong\ \mathbb{S}^1\times\Bigl(-\frac\pi2,\frac\pi2\Bigr),
    \qquad z=(\theta,\alpha),
  \]
  whose two boundary circles $\{\alpha=\pm\tfrac\pi2\}$ in $\overline{M_h}$ we
  call the \emph{grazing circles}.
\end{definition}

Two remarks. First, on the common domain of definition, the involution
conjugating $\mathcal{B}$ and $\mathcal{B}^{-1}$, which sends an inward datum to
the inward datum associated with the time-reversed billiard trajectory, reads
simply
\begin{equation}
  \label{eq:involution_alpha}
  \iota(\theta,\alpha)=(\theta,-\alpha).
\end{equation}
Second, since $\Phi$ is conformal by \eqref{eq:euclidean_elliptic}, it preserves
angles: writing $\tilde\gamma=\Phi^{-1}\circ\gamma$ for the lifted boundary and
$\tilde N,\tilde T$ for its inward normal and tangent in the Euclidean
$(\xi,\eta)$-plane, the angle \eqref{eq:def_alpha} may equivalently be computed in $\mathcal{C}$.

	We introduce now a notion of \textit{first return} to $\partial \Omega$. Since the flow $\Psi^t$ is smooth only on $\{K=0\} \subset T^*\mathcal{C}$, it is most convenient to work there.  Denote by $\pi: T^*\mathcal{C} \to \mathcal{C}$ the base projection. 
		\begin{definition}
  \label{def:exit_time}
  For $z\in \{K=0\}$ with $\pi (z) \in\Phi^{-1}(\overline\Omega)$ let
  \[
    \tau(z):=\inf\bigl\{t>0:\pi\bigl(\Psi^t(z)\bigr)\in \Phi^{-1}(\partial\Omega) \bigr\}
    \ \in(0,+\infty],
  \]
  the \emph{return time}. 
\end{definition}

In particular, for $z=z(\theta,\alpha)\in M_h$ with $\tau(z)<\infty$ such that the velocity of $\Phi(\Psi^{\tau(z)}(z))$ is transversal to $\partial \Omega$, we set $\mathcal{B}(\theta,\alpha)=(\theta',\alpha')$. Here,
$\theta'$ is determined by $\gamma(\theta')=\Phi\bigl(\Psi^{\tau(z)}(z)\bigr)$ and $\alpha'$ is the
angle \eqref{eq:def_alpha} of the reflected velocity at $\gamma(\theta')$.
This provides a general notion of \textit{domain} $\mathcal{D}\subset \mathbb{S}^1\times\Bigl(-\frac\pi2,\frac\pi2\Bigr)$ for the billiard map $\mathcal{B}$.

The previous definition entails that a point $(\theta,\alpha)$ fails to belong to the domain of $\mathcal{B}$ if the corresponding $z(\theta,\alpha )\in M_h$ either does not return to $\partial \Omega$ or does so, but tangentially. We discuss now the first case.

\subsection{Trapped trajectories and the stable directions at the boundary}

The first question we address is: \textit{which solutions never come back to the boundary?} Since we are essentially dealing with two one dimensional integrable Hamiltonian system and $\Phi^{-1}(\overline\Omega)$ is compact, this amounts to classifying all the periodic orbits inside $\Phi^{-1}(\overline\Omega)$. We already know that there are just two periodic orbits: $\Gamma_h$ and $\Gamma_h^{-1}$, so the points for which $\tau(z) =+\infty$ must belong to their stable manifolds. This is, however, not sufficient, as is clarified by the following Proposition. 
\begin{prop}
  \label{prop:exit_time_finite}
  Let $z$ be such that either $\pi(z) \in \Phi^{-1}(\overline\Omega)$, $K(z)=0$ and set $\bar K_0:=-(\mu_1+h)$. Then
  \[
    \tau(z)=+\infty\quad\Longrightarrow\quad
    z\in\Gamma_h\cup W^s(\Gamma_h)\cup W^s(\Gamma_h^{-1}).
  \]
  More precisely, writing $K_0=K_0(z)$ and
  $\xi_{\max}:=\max_{x\in\overline\Omega}\xi(x)$, one has:
  \begin{enumerate}
    \item[(i)] if $K_0>\bar K_0$, then $\tau(z)<+\infty$ and, for any lift of
    $z$ to $\{K=0\}$,
    \[
      \tau(z)\le
      \frac{2\xi_{\max}}{\sqrt{2(K_0-\bar K_0)}};
    \]
    \item[(ii)] if $K_0<\bar K_0$, then $\tau(z)<+\infty$;
    \item[(iii)] if $K_0=\bar K_0$, then $\tau(z)=+\infty$ on $\Gamma_h$ and $\Gamma_h^{-1}$ and
    every branch of separatrix asymptotic in the past has finite exit time.  A
    branch asymptotic to $\Gamma_h$ or $\Gamma_h^{-1}$ in the future has infinite return time precisely when its positive
    semi-orbit remains in $\overline\Omega$.
  \end{enumerate}
\end{prop}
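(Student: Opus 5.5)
The proof exploits the separation $K=K_1+K_2$: along any orbit in $\{K=0\}$ the pair $(\xi,p_\xi)$ evolves autonomously on the level $\{K_1=K_0\}$, with $\dot\xi=p_\xi$. Throughout we use that $\Phi^{-1}(\overline\Omega)$ is compact and that its two lifts lie in $\{|\xi|\le\xi_{\max}\}$, where $\xi_{\max}<\infty$ because $\Omega$ is bounded. The key observation is that whenever $|\xi(t)|$ exceeds $\xi_{\max}$, the projected point has left $\overline\Omega$. Since it started in $\overline\Omega$, it must have crossed $\partial\Omega$ in between, by connectedness of the path. This gives $\tau(z)<\infty$. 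So in each case we only need to show that $|\xi|$ eventually exceeds $\xi_{\max}$.

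For (i), we use \eqref{eq:pxi_squared} and the fact that $W_1\ge W_1(0)=-\bar K_0$. Together these give $|p_\xi|\ge\sqrt{2(K_0-\bar K_0)}>0$, so $p_\xi$ has constant sign and $\xi$ is strictly monotone. On $\Phi^{-1}(\overline\Omega)$ the coordinate $\xi$ ranges in $[-\xi_{\max},\xi_{\max}]$, an interval of length $2\xi_{\max}$. Hence after regularised time $2\xi_{\max}/\sqrt{2(K_0-\bar K_0)}$ the orbit has left this interval, and by the observation above it has met $\partial\Omega$ before that time. This bound holds for either lift.

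For (ii), Lemma~\ref{lemma:monotonicity_xi}(ii) shows that the orbit lies in one component $\{W_1\ge -K_0\}$, has a single turning point, and satisfies $|\xi|\to\infty$ in both time directions. The reason is that on $\{K_1=K_0\}$ away from the turning point, $|p_\xi|$ is bounded below on $\{|\xi|\ge\xi_*+\delta\}$, and $|p_\xi|$ increases with $|\xi|$ after the turn. So $|\xi(t)|$ eventually exceeds $\xi_{\max}$, and $\tau<\infty$.

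For (iii), on $\Gamma_h^{\pm1}$ we have $\xi\equiv0$, so the projection stays on $[-e_1,e_1]\subset\Omega\setminus\partial\Omega$ and $\tau=+\infty$. On a separatrix branch asymptotic in the past, $\xi$ moves monotonically away from $0$. Its speed $|p_\xi|=\sqrt{2(W_1(\xi)-W_1(0))}$ is increasing in $|\xi|$, so starting from $\xi(0)\neq0$ the speed is bounded below and $|\xi|\to\infty$ in finite or at least unbounded time; the argument of (ii) then gives $\tau<\infty$. On a branch asymptotic in the future, $\tau=+\infty$ means exactly that the forward orbit never meets $\partial\Omega$ for $t>0$. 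Since the orbit starts in $\overline\Omega$, this is the same as the positive semi-orbit staying in $\overline\Omega$ (for $t>0$, up to the starting point itself).

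The main point requiring care is the topological step: that leaving $\overline\Omega$ forces a hit of $\partial\Omega$ at some positive time. This uses connectedness together with the case $\pi(z)\in\partial\Omega$ with inward velocity, where small $t>0$ already lies in the interior by transversality. Once (i)--(iii) are established, the main implication follows: $\tau=+\infty$ forces $K_0=\bar K_0$ with $z$ on $\Gamma_h^{\pm1}$ or on a future-asymptotic branch. By \eqref{eq:manifolds_level_set}, this means $z$ lies in $\Gamma_h\cup W^s(\Gamma_h)\cup W^s(\Gamma_h^{-1})$, where $\Gamma_h$ is understood to include $\Gamma_h^{-1}$.
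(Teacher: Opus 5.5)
Your proof is correct and follows essentially the same route as the paper's. Both split into the cases $K_0>\bar K_0$, $K_0<\bar K_0$ and $K_0=\bar K_0$ using $p_\xi^2=2(K_0+W_1(\xi))$, use that escaping from $\{|\xi|\le\xi_{\max}\}$ forces a hit of $\partial\Omega$, and treat the future-asymptotic branches tautologically. Your explicit remark that a start on $\partial\Omega$ needs an inward (transverse) velocity makes precise a point the paper leaves implicit. One caveat, shared with the statement itself: $\tau=+\infty$ really means the forward orbit stays in the open set $\Omega$ for $t>0$, since a tangential interior touch of a non-convex $\partial\Omega$ keeps the orbit in $\overline\Omega$ but gives $\tau<\infty$.
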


\begin{proof}
  Choose a lift to the regularised cylinder.  Since $\Omega$ is bounded, every
  lift of $\overline\Omega$ is contained in $\{|\xi|\le\xi_{\max}\}$.  We use
  Lemma~\ref{lemma:monotonicity_xi} in the form
  \[
    p_\xi^2=2\bigl(K_0+W_1(\xi)\bigr),
  \]
  recalling that $W_1$ is even and has its minimum
  $W_1(0)=-\bar K_0$ at $\xi=0$.

  If $K_0>\bar K_0$, then
  $|\dot\xi|=|p_\xi|\ge\sqrt{2(K_0-\bar K_0)}$ and the sign of $p_\xi$ is
  constant.  Thus $\xi$ is strictly monotone and crosses one of the levels
  $\xi=\pm\xi_{\max}$ in the stated time; before doing so the projected orbit
  must meet $\partial\Omega$.

  If $K_0<\bar K_0$, the orbit is confined to one component of
  $\{|\xi|\ge\xi_*\}$ and has a unique turning point.  After the turning point
  $|\xi|$ tends to infinity.  The regularised time needed to move from the
  turning point to any fixed level is finite, because
  $\bigl(2(K_0+W_1(\xi))\bigr)^{-1/2}$ has the integrable square-root
  singularity at $\xi=\xi_*$.  Hence the orbit must leave the bounded domain in
  finite time.

  Finally, if $K_0=\bar K_0$, the $(\xi,p_\xi)$-level consists of the saddle and
  its four separatrix branches.  The two backward-asymptotic branches satisfy
  $|\xi(t)|\to\infty$ as $t\to+\infty$ and therefore exit in finite time.  The
  saddle projects to $\Gamma_h\subset\Omega$ and has infinite return time.  On a
  forward-asymptotic branch one has $\xi(t)\to0$, but this fact alone does
  \emph{not} imply that the projected orbit remains in a general non-convex
  domain; the last assertion is therefore exactly the stated alternative.
\end{proof}

We have seen that the tangents of the stable manifolds of $\Gamma_h$ and $\Gamma_h^{-1}$ give two vector fields $X_\pm^h$  on $\mathbb{R}^2\setminus[-e_1,e_1]$, defined in \eqref{eq:def_Xpm}. Thus, two leaves pass through every point on $\partial \Omega$: one of $W^s(\Gamma_h)$ and one of $W^s(\Gamma_h^{-1})$. However, as we already remarked,  these vectors can be tangent to the boundary or outward-pointing. Thus, we give the following definition.

\begin{definition}
	\label{def:alpha_pm}
	For each sign, set
	\[
	D_h^\pm:=\bigl\{\theta\in\mathbb S^1:
	\langle X^h_\pm(\tilde \gamma(\theta)),\tilde N(\theta)\rangle>0\bigr\}.
	\]
	If $\theta\in D_h^\pm$, let $\alpha_h^\pm(\theta)\in(-\pi/2,\pi/2)$ be the
	angle \eqref{eq:def_alpha} of $X^h_\pm(\tilde \gamma(\theta))$, and set
	\begin{equation}
		\label{eq:def_graphs}
		\mathcal G_h^\pm:=
		\bigl\{(\theta,\alpha_h^\pm(\theta)):\theta\in D_h^\pm\bigr\}
		\subset M_h,
		\qquad
		\mathcal W_h^s:=\mathcal G_h^+\cup\mathcal G_h^-.
	\end{equation}
	We call these the inward stable boundary graphs of $\Gamma_h$ and $\Gamma^{-1}_h$. We denote by $\iota(\mathcal W_h^s)$ their reflection about the normal $N(\theta)$.
	
\end{definition}

\begin{figure}
\centering
%
%
%
%
%
%
%

\begin{tikzpicture}
\begin{axis}[
    width=14cm,
    height=8.2cm,
    xmin=0, xmax=6.28318530718,
    ymin=-1.03, ymax=1.03,
    axis lines=box,
    xlabel={$\,\theta$},
    ylabel={$\,\alpha$},
    xtick={
        0,
        3.14159265359,
        6.28318530718
    },
    xticklabels={
        $0$,
        $\pi$,
        $2\pi$
    },
    ytick={0},
    samples=800,
    clip=false,
    legend style={
        draw=none,
        fill=none,
        font=\small,
        at={(0.5,1.03)},
        anchor=south,
        legend columns=2,
        /tikz/every even column/.append style={column sep=7mm}
    },
]


\addplot[
    black,
    very thick,
    domain=0:6.28318530718,
    variable=\t
]
{
  0.38
  + 0.16*cos(deg(\t))
  + 0.05*sin(deg(2*\t))
};
\addlegendentry{$\mathcal G_h^+$}

\addplot[
    black,
    very thick,
    domain=0:6.28318530718,
    variable=\t
]
{
 -0.38
 + 0.16*cos(deg(\t))
 - 0.05*sin(deg(2*\t))
};
\addlegendentry{$\mathcal G_h^-$}


\addplot[
    black,
    thick,
    dashed,
    domain=0:6.28318530718,
    variable=\t
]
{
 -0.38
 - 0.16*cos(deg(\t))
 - 0.05*sin(deg(2*\t))
};
\addlegendentry{$\iota(\mathcal G_h^+)$}

\addplot[
    black,
    thick,
    dashed,
    domain=0:6.28318530718,
    variable=\t
]
{
  0.38
 - 0.16*cos(deg(\t))
 + 0.05*sin(deg(2*\t))
};
\addlegendentry{$\iota(\mathcal G_h^-)$}


\addplot[
    black,
    thin,
    densely dotted
]
coordinates {
    (1.57079632679,-1.03)
    (1.57079632679, 1.03)
};

\addplot[
    black,
    thin,
    densely dotted
]
coordinates {
    (4.71238898038,-1.03)
    (4.71238898038, 1.03)
};

\node[anchor=north,font=\small]
at (axis cs:1.57079632679,-1.075)
{$\bar\theta$};

\node[anchor=north,font=\small]
at (axis cs:1.17079632679,-1.075)
{$\theta_-$};

\node[anchor=north,font=\small]
at (axis cs:1.97079632679,-1.075)
{$\theta_+$};


%
%

\addplot[
    only marks,
    mark=*,
    mark size=1.5pt
]
coordinates {
    (1.57079632679, 0.38)
    (1.57079632679,-0.38)
    (4.71238898038, 0.38)
    (4.71238898038,-0.38)
};

\node[
    draw,
    circle,
    line width=0.9pt,
    minimum size=17mm,
    inner sep=0pt
] at (axis cs:1.57079632679, 0.38) {};

\node[
    draw,
    circle,
    line width=0.9pt,
    minimum size=17mm,
    inner sep=0pt
] at (axis cs:1.57079632679,-0.38) {};

%

\node[anchor=west,font=\small]
at (axis cs:1.97, 0.49) {$\mathcal U^+$};

\node[anchor=west,font=\small]
at (axis cs:1.97,-0.50) {$\mathcal U^-$};



\node[anchor=west,font=\scriptsize]
at (axis cs:1.78,0.02)
{$\alpha_h^-(\bar\theta)=-\alpha_h^+(\bar\theta)$};


\node[anchor=west,font=\small]
at (axis cs:5.2,0.6)
{$\alpha=\alpha_h^+(\theta)$};

\node[anchor=west,font=\small]
at (axis cs:5.2,0.3)
{$\alpha=-\alpha_h^-(\theta)$};

\node[anchor=west,font=\small]
at (axis cs:5.2,-0.3)
{$\alpha=\alpha_h^-(\theta)$};

\node[anchor=west,font=\small]
at (axis cs:5.2,-0.6)
{$\alpha=-\alpha_h^+(\theta)$};

\draw[
    <->,
    >=Stealth,
    thin
]
(axis cs:0.1,0.78) -- node[right] {$\iota$} (axis cs:0.1,-0.78);

\draw[
    <->,
    >=Stealth,
    line width=0.8pt
]
(axis cs:1.15,-0.85)
--
node[below] {$I=[\theta_-,\theta_+]$}
(axis cs:1.99,-0.85);

\end{axis}
\end{tikzpicture}
\caption{Schematic phase portrait in the non-degenerate case. The solid curves are the stable graphs $\mathcal G_h^\pm$ , while the dashed curves are their images under the involution $\iota$. 
	The circled cross-like intersections correspond to the minimum point (in this case unique) of $f$ and will contain the symbolic dynamics we will build in Section \ref{sec:symbolic}}
\end{figure}
The next lemma shows that the definition is well posed and that the domains $D^\pm_h$ are open sets.
\begin{lemma}
  \label{lem:alpha_formula}
  Write $\tilde\gamma(\theta)=(\xi(\theta),\eta(\theta))$, $'=d/d\theta$, and let
  $a,b$ be as in \eqref{eq:def_Xpm}.  On $D_h^\pm$ one has
  \begin{equation}
    \label{eq:alpha_pm}
    \tan\alpha^{\pm}_h(\theta)
    =\frac{\langle X^h_\pm,\tilde T\rangle}
          {\langle X^h_\pm,\tilde N\rangle}
    =\frac{-a\,\xi'\pm b\,\eta'}
           {\varsigma\bigl(-a\,\eta'\mp b\,\xi'\bigr)},
    \qquad
    \tilde N=\varsigma\,\frac{(\eta',-\xi')}{|\tilde\gamma'|},
  \end{equation}
  where $\varsigma\in\{\pm1\}$ is chosen so that $\tilde N$ points inwards.
  The sets $D_h^\pm$ are open, the functions $\alpha_h^\pm$ have the same
  regularity as $\dot\gamma$, and the two graphs are disjoint wherever both are
  defined.
\end{lemma}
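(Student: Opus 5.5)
The plan is to work entirely in the cylinder $\mathcal C$ with the Euclidean metric, which is legitimate because $\Phi$ is conformal and the angle \eqref{eq:def_alpha} can therefore be computed there. The formula should then follow from direct computation, and the remaining claims from continuity and an orthogonality argument.

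\textbf{Step 1: the formula.} Take the unit tangent $\tilde T=(\xi',\eta')/|\tilde\gamma'|$ and the normal $\tilde N=\varsigma(\eta',-\xi')/|\tilde\gamma'|$, with the sign $\varsigma$ fixed once and for all so that $\tilde N$ points inward. Since $\tilde\gamma$ is a $\mathcal C^1$ simple closed curve with nonvanishing derivative, $\varsigma$ is constant along the curve by continuity. For any vector $v$, the angle $\alpha$ with $v/|v|=\cos\alpha\,\tilde N+\sin\alpha\,\tilde T$ satisfies $\tan\alpha=\langle v,\tilde T\rangle/\langle v,\tilde N\rangle$ whenever $\langle v,\tilde N\rangle>0$. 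Substituting $X^h_\pm=(-a,\pm b)$ gives the numerator $(-a\xi'\pm b\eta')/|\tilde\gamma'|$ and the denominator $\varsigma(-a\eta'\mp b\xi')/|\tilde\gamma'|$. The factors $|\tilde\gamma'|$ cancel, which yields \eqref{eq:alpha_pm}.

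\textbf{Step 2: openness and regularity.} The functions $a(\xi)$ and $b(\eta)$ are continuous; they are in fact real-analytic wherever their arguments are positive. The argument of $b$ is always positive by \eqref{eq:pos_peta}, and the argument of $a$ is positive because $\xi>0$ on the lifted boundary, using \eqref{eq:factorisation}. Consequently $\theta\mapsto\langle X^h_\pm(\tilde\gamma(\theta)),\tilde N(\theta)\rangle$ is continuous, with the same regularity as $\tilde\gamma'$, that is, as $\dot\gamma$ composed with the analytic map $\Phi^{-1}$. So $D^\pm_h$, the preimage of $(0,\infty)$, is open. On $D^\pm_h$ we have $\alpha^\pm_h=\arctan(\text{ratio})$ with a nonvanishing denominator, so it inherits the regularity of $\dot\gamma$.

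\textbf{Step 3: disjointness.} Suppose $\alpha^+_h(\theta)=\alpha^-_h(\theta)$ at some $\theta\in D^+_h\cap D^-_h$. Then $X^h_+$ and $X^h_-$ would be positively parallel, and since they have equal length they would be equal. That forces $b(\eta)=-b(\eta)$, hence $b=0$, contradicting \eqref{eq:pos_peta}.

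The only delicate point is the global constancy of $\varsigma$ and the positivity of $a$ on the lifted boundary, both of which rely on $\gamma$ avoiding $[-e_1,e_1]$. Everything else is routine.
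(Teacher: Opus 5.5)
Your proposal is correct and follows the paper's proof. The formula comes from conformality of $\Phi$ and substituting $X^h_\pm=(-a,\pm b)$; openness and regularity come from the strict inequality defining $D_h^\pm$; and disjointness comes from $b>0$, where you use equal norms and the paper uses $a,b>0$ to show $(-a,b)$ and $(-a,-b)$ are never parallel, an immaterial difference. You also spell out the constancy of $\varsigma$ and the regularity bookkeeping that the paper leaves implicit.
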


\begin{proof}
  The first equality is \eqref{eq:def_alpha} together with conformality of
  $\Phi$, and the second follows from $X^h_\pm=(-a,\pm b)$.  On the boundary
  $\xi>0$, hence $a>0$ by \eqref{eq:factorisation}, while $b>0$ by
  \eqref{eq:pos_peta}; consequently $(-a,b)$ and $(-a,-b)$ are never parallel.
  Openness and regularity follow from the strict inward-pointing condition.
\end{proof}

\subsection{Transit time near the collision orbit}
\label{subsec:transit}
The aim of this section is to estimate the first return time of trajectories of the two-centre problem to the boundary of $\Omega$. We stress that \textit{time} always refers to \textit{regularised time}, i.e. we use the coordinates $(\xi,\eta)$ and the Hamiltonian function $K$ defined in Section \ref{sec:two_centres}.

To do so, we first place a small ellipse $\mathcal{E}_0 = \{x: f(x)=\cosh\delta_0\}$ inside $\Omega$ and we look at the first return time there.  Up to some bounded error, the return time will be the same.
In the $(\xi,\eta)$ coordinates, $\mathcal{E}_0$ lifts to two disjoint circles corresponding to $\{\xi=\pm\delta_0\}$. Set
\begin{equation}
  \label{eq:def_rho}
  \rho:=|K_0-\bar K_0|,
  \qquad \bar K_0=-(\mu_1+h),
\end{equation}
and recall $\lambda=\sqrt{\mu_1+2h}$ from \eqref{eq:def_lambda}.

For $K_0$ sufficiently close to $\bar K_0$, let
$T_{\delta_0}(K_0)$ denote the time spent in 
 $\{|\xi|\le\delta_0\}$ before returning to the boundary: if
$K_0<\bar K_0$ this is twice the time needed to travel from $\xi=\delta_0$ to the minimum of the $\xi$ coordinate (i.e. its \textit{turning point}), while if $K_0>\bar K_0$ it is the time needed to travel from $\xi=\delta_0$ to
$\xi=-\delta_0$.

\begin{lemma}
  \label{lem:transit_time}
  For $\delta_0>0$ sufficiently small there is a constant $C_{\delta_0}$ such
  that
  \begin{equation}
    \label{eq:tau_vs_rho}
    \left|T_{\delta_0}(K_0)
      -\frac1\lambda\log\frac1\rho\right|
    \le C_{\delta_0}
  \end{equation}
  \label{eq:tau_two_sided}
  whenever $0<\rho\ll1$.  If $K_0<\bar K_0$ and $\xi_{\min}$ is the turning
  point, then, with
  $\Lambda_0:=\bigl(\max_{[0,\delta_0]}W_1''\bigr)^{1/2}$,
  \begin{equation}
    \label{eq:two_sided_W}
    \frac{\lambda^2}{2}(\xi^2-\xi_{\min}^2)
    \le W_1(\xi)-W_1(\xi_{\min})
    \le \frac{\Lambda_0^2}{2}(\xi^2-\xi_{\min}^2),
  \end{equation}
  for $\xi\in[\xi_{\min},\delta_0]$, and
  \[
    \frac{\sqrt{2\rho}}{\Lambda_0}
    \le\xi_{\min}\le
    \frac{\sqrt{2\rho}}{\lambda}.
  \]
\end{lemma}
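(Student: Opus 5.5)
The plan is to reduce everything to the one-degree-of-freedom system $K_1=\tfrac12p_\xi^2-W_1(\xi)$ near its saddle. Along a trajectory with separation constant $K_0$, \eqref{eq:pxi_squared} gives $\dot\xi^2=2(K_0+W_1(\xi))$. Writing $K_0=\bar K_0\pm\rho$, this becomes
\[
  \dot\xi^2=2\bigl(W_1(\xi)-W_1(0)\pm\rho\bigr).
\]
The transit time is therefore an explicit quadrature,
\[
  T_{\delta_0}=2\int_{0}^{\delta_0}\frac{d\xi}{\sqrt{2(W_1(\xi)-W_1(0)+\rho)}}
\]
in the case $K_0>\bar K_0$, using that $W_1$ is even. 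In the case $K_0<\bar K_0$ the same formula holds with the integral taken over $[\xi_{\min},\delta_0]$ and $W_1(0)$ replaced by $W_1(\xi_{\min})=W_1(0)+\rho$.

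First I would prove the two-sided bound \eqref{eq:two_sided_W}. By Taylor's formula, $W_1(\xi)-W_1(\xi_{\min})=\int_{\xi_{\min}}^{\xi}W_1'(s)\,ds$. Since $W_1'(0)=0$, we have $W_1'(s)=\int_0^sW_1''$. The explicit formula gives $W_1''(\xi)=\mu_1\cosh\xi+2h\cosh 2\xi\ge\mu_1+2h=\lambda^2$, so
\[
  \lambda^2s\le W_1'(s)\le\Lambda_0^2 s \qquad\text{on }[0,\delta_0].
\]
Integrating from $\xi_{\min}$ to $\xi$ gives \eqref{eq:two_sided_W}. Applying the same inequality between $0$ and $\xi_{\min}$ to $W_1(\xi_{\min})-W_1(0)=\rho$ gives $\tfrac{\lambda^2}{2}\xi_{\min}^2\le\rho\le\tfrac{\Lambda_0^2}{2}\xi_{\min}^2$, which is exactly the stated bound on $\xi_{\min}$.

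For the time estimate \eqref{eq:tau_vs_rho}, I would compare with the quadratic model. In the case $K_0<\bar K_0$, inserting \eqref{eq:two_sided_W} into the quadrature sandwiches $T_{\delta_0}$ between
\[
  \frac{2}{\Lambda_0}\operatorname{arccosh}\frac{\delta_0}{\xi_{\min}}
  \qquad\text{and}\qquad
  \frac{2}{\lambda}\operatorname{arccosh}\frac{\delta_0}{\xi_{\min}},
\]
because $\int_{\xi_{\min}}^{\delta_0}\frac{d\xi}{\sqrt{\xi^2-\xi_{\min}^2}}=\operatorname{arccosh}(\delta_0/\xi_{\min})$. Since $\xi_{\min}\asymp\sqrt\rho$, this term equals $\tfrac12\log\tfrac1\rho+O(1)$. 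This yields the leading coefficient $1/\lambda$ only up to replacing $\lambda$ by $\Lambda_0$, and $\Lambda_0\neq\lambda$.

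This is the main obstacle: a uniform additive error requires a sharper comparison than the crude sandwich. I would write
\[
  W_1(\xi)-W_1(\xi_{\min})=\tfrac{\lambda^2}{2}(\xi^2-\xi_{\min}^2)\bigl(1+O(\xi^2)\bigr),
\]
with $O$ uniform in $\xi_{\min}\le\xi\le\delta_0$. This holds because $W_1$ is even and analytic, so $W_1(\xi)=W_1(0)+\tfrac{\lambda^2}{2}\xi^2+\xi^4g(\xi^2)$, and $\xi^4-\xi_{\min}^4=(\xi^2-\xi_{\min}^2)(\xi^2+\xi_{\min}^2)$. The difference between the true integrand and the model integrand is then bounded by $C\xi^2/\sqrt{\xi^2-\xi_{\min}^2}\le C\xi\cdot\xi/\sqrt{\xi^2-\xi_{\min}^2}$. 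This is integrable uniformly in $\rho$, since it is $\le C\xi$ away from $\xi_{\min}$ and has an integrable square-root singularity near it. The error is therefore $O(\delta_0^2)$, independent of $\rho$. Combined with $\operatorname{arccosh}(\delta_0/\xi_{\min})=\log(2\delta_0/\xi_{\min})+o(1)$ and $\xi_{\min}=\sqrt{2\rho}/\lambda\,(1+O(\rho))$, this gives $T_{\delta_0}=\tfrac1\lambda\log\tfrac1\rho+O(1)$.

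The case $K_0>\bar K_0$ is handled identically. The integrand is now $\bigl(\lambda^2\xi^2(1+O(\xi^2))+2\rho\bigr)^{-1/2}$, the model integral is $\tfrac2\lambda\operatorname{arcsinh}(\lambda\delta_0/\sqrt{2\rho})=\tfrac1\lambda\log\tfrac1\rho+O(1)$, and the perturbation error is again bounded by $C\int_0^{\delta_0}\xi\,d\xi$.
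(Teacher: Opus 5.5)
Your proposal is correct and follows essentially the paper's route: the two-sided bound and the bounds on $\xi_{\min}$ come from integrating $\lambda^2 s\le W_1'(s)\le\Lambda_0^2 s$, and the transit time is the model $\operatorname{arccosh}$/$\operatorname{arcsinh}$ integral plus an error of the form $\int O(\xi^2)\,(\xi^2-\xi_{\min}^2)^{-1/2}\,d\xi$, which is bounded uniformly in $\rho$. The only difference is technical. The paper straightens the saddle exactly with $y=\sqrt{2(W_1(\xi)-W_1(0))}/\lambda$ and moves the nonlinearity into the Jacobian $d\xi/dy=1+O(y^2)$, whereas you stay in $\xi$ and carry a multiplicative factor $1+O(\xi^2)$ in the potential; that factor is best justified by integrating $W_1'(s)=\lambda^2 s\,(1+O(s^2))$ from $\xi_{\min}$ to $\xi$, since the remainder $\xi^4 g(\xi^2)$ is not a pure monomial and the factorisation of $\xi^4-\xi_{\min}^4$ alone does not cover it.
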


\begin{proof}
  Put $F(\xi):=W_1(\xi)-W_1(0)$.  Since
  $F(0)=F'(0)=0$ and $F''(0)=\lambda^2$, one has
  \[
    F(\xi)=\frac{\lambda^2}{2}\xi^2+O(\xi^4)
    \qquad (\xi\to0).
  \]
  Moreover $W_1'(\xi)=\sinh\xi(\mu_1+2h\cosh\xi)$ and hence, after reducing
  $\delta_0$ if necessary,
  $\lambda^2\xi\le W_1'(\xi)\le\Lambda_0^2\xi$ on
  $[0,\delta_0]$.  Integration yields \eqref{eq:two_sided_W}; setting
  $\xi=\xi_{\min}$ in $F(\xi_{\min})=\rho$ gives the stated bounds on the
  turning point.

  Introduce the local coordinate
  \[
    y=y(\xi):=\frac{\sqrt{2F(\xi)}}{\lambda},\qquad \xi\ge0.
  \]
  Then $y=\xi+O(\xi^3)$ and $d\xi/dy=1+O(y^2)$ uniformly on a sufficiently
  small fixed interval.  If $K_0<\bar K_0$, the turning point is characterised
  by $y_{\min}=\sqrt{2\rho}/\lambda$ and
  \[
    T_{\delta_0}(K_0)=
    \frac{2}{\lambda}
    \int_{y_{\min}}^{y(\delta_0)}
      \frac{d\xi/dy}{\sqrt{y^2-y_{\min}^2}}\,dy.
  \]
  The contribution of $d\xi/dy-1=O(y^2)$ is uniformly bounded, whereas the
  principal part is
  \[
    \frac{2}{\lambda}
    \operatorname{arcosh}\!\left(
      \frac{y(\delta_0)}{y_{\min}}
    \right)
    =\frac1\lambda\log\frac1\rho+O(1).
  \]
  If $K_0>\bar K_0$, the same change of variable gives
  \[
    T_{\delta_0}(K_0)=
    \frac{2}{\lambda}
    \int_0^{y(\delta_0)}
      \frac{d\xi/dy}{\sqrt{y^2+2\rho/\lambda^2}}\,dy
    =\frac1\lambda\log\frac1\rho+O(1),
  \]
  because the principal integral is an $\operatorname{arsinh}$ and the error is
  again uniformly bounded.  This proves \eqref{eq:tau_vs_rho} in both cases.
\end{proof}

\subsection{Non-tangency}
\label{subsec:non_tangency}
In this section we address the problem of finding sufficient conditions for the global well-posedness of the billiard map, which is not needed in the construction of the symbolic dynamics in Section \ref{sec:symbolic} which will be a local construction. 

To this end, we will employ the Jacobi--Maupertuis principle: trajectories with fixed energy $h$, up to
a reparametrization of time, are geodesics of the corresponding Jacobi--Maupertuis metric $g_h$ given by
\begin{equation}
	\label{eq:def_gh}
	g_h=2\,(h+U)\,\bigl(dx_1^2+dx_2^2\bigr).
\end{equation}
To formulate the first criterion, we need first to establish some properties of $g_h$ and, when $c_i= (-1)^ie_1$, of the metric spaces $(\mathcal{C},\Phi^*g_h)$. Note that the two problems are closely related. If we combine a Euclidean isometry and a rescaling, which places the centres in $\pm e_1$ and changes the energy $h$ to a new energy $\tilde h = \tilde{h}(c_1,c_2)$, we obtain a Riemannian submersion between $(\mathbb{R}^2\setminus \{c_1,c_2\})$ with the metric $g_h$ and $\mathcal{C}\setminus \{(0,\pi),(0,0)\}$ with the metric $\Phi^*g_{\tilde{h}}$. 

We write $\mathcal{K}_g$
for the Gaussian curvature of a metric $g$. We recall the following (classical) result which asserts that all these metrics have negative curvature.
\begin{lemma}
	\label{lem:neg_curvature}
	For every $h\ge0$ the metric $g_h$ has strictly negative Gaussian curvature on
	$\mathbb{R}^2\setminus\{c_1,c_2\}$. Explicitly,
	\begin{equation}
		\label{eq:curvature}
		\mathcal{K}_{g_h}=-\frac{1}{4(h+U)^3}
		\left[\,h\Bigl(\frac{m_1}{r_1^3}+\frac{m_2}{r_2^3}\Bigr)
		+\frac{m_1m_2\,\ell^2}{r_1^3r_2^3}\,\right]\;<\;0 .
	\end{equation}
	Moreover, for any $h\ge0$, $(\mathcal{C},\Phi^*g_h)$ is a complete and negatively curved Riemannian surface.
\end{lemma}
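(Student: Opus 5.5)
For a conformal metric $g=e^{2\varphi}(dx_1^2+dx_2^2)$ we use the standard formula $\mathcal K_g=-e^{-2\varphi}\Delta\varphi$. Here $e^{2\varphi}=2(h+U)$, so
\[
\mathcal K_{g_h}=-\frac{1}{4(h+U)}\Delta\log(h+U)
=-\frac{1}{4(h+U)^3}\Bigl[(h+U)\Delta U-|\nabla U|^2\Bigr].
\]
Since $1/r$ is not harmonic in the plane, $\Delta(1/r)=1/r^3$ away from the origin. Hence $\Delta U=m_1/r_1^3+m_2/r_2^3$. For the gradient, $\nabla U=-m_1\hat r_1/r_1^2-m_2\hat r_2/r_2^2$, which gives
\[
|\nabla U|^2=\frac{m_1^2}{r_1^4}+\frac{m_2^2}{r_2^4}+\frac{2m_1m_2\langle\hat r_1,\hat r_2\rangle}{r_1^2r_2^2}.
\]
Next we expand $U\Delta U$. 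Its diagonal terms $m_i^2/r_i^4$ cancel against those in $|\nabla U|^2$. What remains is
\[
m_1m_2\Bigl(\frac{1}{r_1r_2^3}+\frac{1}{r_2r_1^3}-\frac{2\langle\hat r_1,\hat r_2\rangle}{r_1^2r_2^2}\Bigr)=\frac{m_1m_2}{r_1^3r_2^3}\bigl(r_1^2+r_2^2-2\langle x-c_1,x-c_2\rangle\bigr)=\frac{m_1m_2\,\ell^2}{r_1^3r_2^3}.
\]
The last step is the law of cosines. This yields \eqref{eq:curvature}, and negativity follows since $h\ge0$ and $m_i>0$.

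For the cylinder, $\Phi$ is a local conformal diffeomorphism away from the two branch points, so $\Phi^*g_h$ is locally isometric to $g_h$ there and has the same negative curvature. At the branch points we use \eqref{eq:euclidean_elliptic} and \eqref{eq:conformal_factor}:
\[
\Phi^*g_h=2(h+U)\,r_1r_2\,(d\xi^2+d\eta^2)=2\bigl(W_1(\xi)-W_2(\eta)\bigr)(d\xi^2+d\eta^2).
\]
The conformal factor is real-analytic on all of $\mathcal C$ and bounded below by $4m_2>0$ by \eqref{eq:speed_on_shell}, so $\Phi^*g_h$ is a smooth metric on $\mathcal C$. Negative curvature extends across the branch points by the same formula,
\[
\mathcal K=-\frac{\Delta_{\xi\eta}\log(W_1-W_2)}{2(W_1-W_2)}.
\]
At $(0,0)$ and $(0,\pi)$ we have $W_1',W_2'=0$, so $\Delta\log(W_1-W_2)=(W_1''-W_2'')/(W_1-W_2)$. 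This is positive because $W_1''(0)=\mu_1+2h$ and $-W_2''(0)=\mu_2+2h$ in the appropriate sign combination. Concretely, at $\eta=0$ one gets $W_1''-W_2''=\mu_1+2h+\mu_2+2h>0$, and at $\eta=\pi$ one gets $W_1''-W_2''=\mu_1+2h-\mu_2+2h>0$. Alternatively, strict negativity at these two points follows by continuity from the limit of the explicit expression.

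Completeness is the step needing slightly more care. The factor satisfies $2(W_1-W_2)\ge 4m_2$, so $\Phi^*g_h\ge 4m_2(d\xi^2+d\eta^2)$. The flat cylinder metric is complete, so any $\Phi^*g_h$-Cauchy sequence is Cauchy for the flat metric, hence bounded in $\xi$ and convergent. Equivalently, closed bounded sets are compact by Hopf–Rinow. I expect the only real obstacle to be the bookkeeping in the cross term and the branch-point check; the rest is routine.
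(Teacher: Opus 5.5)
Your proof is correct and follows the paper's route. The steps are the conformal curvature formula, $\Delta(1/r)=1/r^3$, the cross term collapsed via the constant vector $(x-c_1)-(x-c_2)=c_2-c_1$, and completeness from $\Phi^*g_h\ge 4m_2(d\xi^2+d\eta^2)$; your $W_1''-W_2''$ computation at the branch points makes explicit the paper's remark that the curvature has a negative limit at $\pm e_1$.

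One harmless slip: for $\Phi^*g_h=2(W_1-W_2)(d\xi^2+d\eta^2)$ the formula is $\mathcal K=-\Delta_{\xi\eta}\log(W_1-W_2)/\bigl(4(W_1-W_2)\bigr)$, with a $4$ rather than a $2$ in the denominator. With the $4$ one gets $-(m_1+2h)/(8m_2^2)$ at $(0,0)$, matching the limit of \eqref{eq:curvature}, and the sign is unaffected either way.
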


\begin{proof}
	For a conformal metric $\Lambda\,(dx_1^2+dx_2^2)$ one has
	$\mathcal{K}=-\frac{1}{2\Lambda}\Delta\log\Lambda$, hence with
	$\Lambda=2(h+U)$
	\[
	\mathcal{K}_{g_h}=-\frac{1}{4(h+U)^3}
	\Bigl((h+U)\,\Delta U-\vert\nabla U\vert^2\Bigr).
	\]
	In the plane $\Delta(1/r)=1/r^3$, so
	$\Delta U=\frac{m_1}{r_1^3}+\frac{m_2}{r_2^3}$. Setting $A=\frac{m_1}{r_1}$,
	$B=\frac{m_2}{r_2}$ and using $\nabla A=-\frac{A}{r_1}\hat r_1$,
	$\nabla B=-\frac{B}{r_2}\hat r_2$, a direct computation gives
	\begin{equation}
		\label{eq:UDU}
		(h+U)\,\Delta U-\vert\nabla U\vert^2
		= h\Bigl(\frac{m_1}{r_1^3}+\frac{m_2}{r_2^3}\Bigr)
		+\frac{m_1m_2}{r_1r_2}
		\Bigl\vert\frac{\hat r_1}{r_1}-\frac{\hat r_2}{r_2}\Bigr\vert^2 .
	\end{equation}
	The second summand simplifies: since $(x-c_1)-(x-c_2)=c_2-c_1$ is a
	\emph{constant} vector,
	\[
	\Bigl\vert\frac{\hat r_1}{r_1}-\frac{\hat r_2}{r_2}\Bigr\vert^2
	=\frac{r_2^2+r_1^2-2\,(x-c_1)\cdot(x-c_2)}{r_1^2r_2^2}
	=\frac{\vert c_2-c_1\vert^2}{r_1^2r_2^2}
	=\frac{\ell^2}{r_1^2r_2^2},
	\]
	so that it equals $m_1m_2\ell^2/(r_1^3r_2^3)$. Both summands are nonnegative
	and the second is strictly positive, whence $\mathcal{K}_{g_h}<0$ for every
	$h\ge0$. 
	
	For the second part, assuming $c_i = (-1)^i e_1$, let us observe that thanks to \eqref{eq:euclidean_elliptic}, \eqref{eq:conformal_factor} and \eqref{eq:lambdaU}  one has that 
	\[
	\Phi^*g_h = 2(h+U\circ \Phi)(\cosh^2\xi-\cos^2\eta)(d\xi^2+d\eta^2) = 2(h r_1r_2 +\mu_1 \cosh\xi-\mu_2 \cos \eta)(d\xi^2+d\eta^2)
	\]
	which is smooth and satisfies $\Phi^*g_h\ge c(d\xi^2+d\eta^2)$ where $c>0$. Thus, it is always complete. A direct computation shows that the limit of $\mathcal{K}_{g_h}$ at $\pm e_1$ exists and is negative, concluding the proof.
\end{proof}

We turn now to the following definition of geodesic convexity.
\begin{definition}
  \label{def:gh_convex}
  $\Omega$ is \emph{strictly $g_h$-convex} if for every $q\in\partial\Omega$
  there are a neighbourhood $V$ of $q$ and a $g_h$-geodesic $\sigma_q$ through
  $q$ tangent to $\partial\Omega$ with
  $\sigma_q\cap\overline\Omega\cap V=\{q\}$.
\end{definition}

The definition involves no second derivatives, so it makes sense for a
$\mathcal{C}^1$ boundary.

\begin{prop}
  \label{prop:no_grazing}
  If $\Omega$ is strictly $g_h$-convex, then:
  \begin{enumerate}
    \item[(i)] no $g_h$-geodesic segment contained in $\overline\Omega$ can be
    tangent to $\partial\Omega$ at one of its endpoints unless it is trivial;
    \item[(ii)] $D_h^+=D_h^-=\mathbb S^1$, so
    $\mathcal G_h^\pm\subset M_h$, and every forward-asymptotic stable branch
    issued from $\partial\Omega$ remains in $\overline\Omega$;
    \item[(iii)] the billiard map is well defined and continuous on
    $M_h\setminus\mathcal W_h^s$, while $\mathcal B^{-1}$ is well defined and
    continuous on $M_h\setminus\mathcal W_h^u$;
    \item[(iv)] for every $q_1,q_2\in\overline\Omega\setminus[c_1,c_2]$ and every homotopy class
    of paths joining them in $\overline\Omega\setminus[c_1,c_2]$, the
    corresponding $g_h$-geodesic is contained in $\overline\Omega$.
  \end{enumerate}
\end{prop}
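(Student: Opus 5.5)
We prove the four items in order; (i) is the local geometric core, and the other three follow from it together with the negative curvature of Lemma~\ref{lem:neg_curvature}.

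\textbf{Item (i).} Suppose a nontrivial $g_h$-geodesic segment $\sigma\subset\overline\Omega$ is tangent to $\partial\Omega$ at an endpoint $q$. Let $\sigma_q$ be the geodesic through $q$ tangent to $\partial\Omega$ given by Definition~\ref{def:gh_convex}. Both $\sigma$ and $\sigma_q$ pass through $q$ with the same tangent line. A geodesic is determined by its initial point and direction, so $\sigma$ coincides near $q$ with $\sigma_q$ or with its reversal. Then $\sigma\cap V\subset\sigma_q\cap\overline\Omega\cap V=\{q\}$, so $\sigma$ is trivial near $q$, which is a contradiction. If $q$ is a collision point, we instead work on the regularised cylinder with the smooth metric $\Phi^*g_h$, where the same uniqueness argument applies. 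We also check that tangency is meaningful for a $\mathcal C^1$ boundary: it is, since it only involves first derivatives.

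\textbf{Item (ii).} Fix $\theta$ and consider the forward-asymptotic branch through $\gamma(\theta)$ with velocity $X^h_\pm$. Its future trajectory tends to the segment $[c_1,c_2]\subset\Omega$. Suppose first that $X^h_\pm$ pointed outward or were tangent at $\gamma(\theta)$. Follow the branch backwards from deep inside $\Omega$: by Lemma~\ref{lemma:monotonicity_xi} $\xi$ is monotone along it, so the branch comes from outside $\overline\Omega$. At the first parameter where it enters $\overline\Omega$ it either crosses transversally or touches tangentially. A tangential touch is excluded by (i), applied to the geodesic piece inside $\overline\Omega$ (or by the one-sided local picture from Definition~\ref{def:gh_convex}). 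To conclude, we plan a continuity/degree argument. The set $D_h^\pm$ is open by Lemma~\ref{lem:alpha_formula}. Its complement consists of points where the pairing with the normal is $\le0$. On its boundary the pairing vanishes, which gives a stable leaf tangent to $\partial\Omega$ whose future lies in $\Omega$. Such a leaf is a geodesic segment in $\overline\Omega$ tangent at its endpoint, contradicting (i). Hence $D_h^\pm$ is open, closed and nonempty, so it equals $\mathbb S^1$. Nonemptiness comes from points of $\partial\Omega$ of maximal $\xi$: there the outward normal is $\nabla\xi$, and $X^h_\pm$ has negative $\xi$-component $-a$. Remaining in $\overline\Omega$ follows the same way: the first exit point would again be a tangency, excluded by (i), or a transversal exit followed by a re-entry whose last re-entry point is tangent.

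\textbf{Item (iii).} Outside $\mathcal W_h^s$, Proposition~\ref{prop:exit_time_finite} together with (ii) gives $\tau<\infty$, and (i) shows that the arrival is transversal. The implicit function theorem then gives continuity of $\tau$, hence of $\mathcal B$. The statement for $\mathcal B^{-1}$ follows from the involution $\iota$ in \eqref{eq:involution_alpha}.

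\textbf{Item (iv), the main obstacle.} Lift to the universal cover of $\mathcal C$ minus the branch points, or work directly with the complete negatively curved $(\mathcal C,\Phi^*g_h)$ of Lemma~\ref{lem:neg_curvature}. There the geodesic in a given homotopy class exists and is unique. We deform the endpoints to the interior and then run a continuity method. The set of parameters for which the geodesic lies in $\overline\Omega$ is closed. It is also open, because the first loss of containment would produce a geodesic touching $\partial\Omega$ tangentially at an interior point. That touching is excluded by the local strict convexity in Definition~\ref{def:gh_convex}: the touching geodesic must coincide with $\sigma_q$, which meets $\overline\Omega$ only at $q$ near $q$. We expect the hardest part to be handling homotopy classes relative to $[c_1,c_2]$ rather than to the two points, and the possibility of collisions along the way.
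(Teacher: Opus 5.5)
Your items (i) and (iii) match the paper, and the continuity argument in (iv) is sound. Item (ii), however, has a genuine gap.

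\textbf{The gap in (ii).} The open--closed argument rests on the assertion that a boundary point of $D_h^\pm$ ``gives a stable leaf tangent to $\partial\Omega$ whose future lies in $\Omega$''. Nothing supports this, and strict $g_h$-convexity says the opposite. A leaf tangent to $\partial\Omega$ at $q$ coincides near $q$ with $\sigma_q$ (or its reversal). Then $\sigma_q\cap\overline\Omega\cap V=\{q\}$ forces it to leave $\overline\Omega$ immediately, so (i) cannot be applied to it.

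What must be excluded is a \emph{global} configuration. A geodesic grazes $\partial\Omega$ from outside (or exits transversally) at $q$. It then re-enters $\overline\Omega$ for good, as it must, since $\xi\to0$ along the leaf. Your first paragraph of (ii) correctly shows that this last entry is transversal, but you never derive a contradiction from it.

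The same problem affects ``remains in $\overline\Omega$''. A transversal exit followed by a transversal re-entry is not excluded by anything local. Your claim that the last re-entry point must be tangent is unfounded, and it contradicts what you had just shown. This global step is exactly where the paper passes to the universal cover, in which $\Phi^{-1}(\overline\Omega)$ becomes an infinite strip, and argues there.

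\textbf{How to close it.} The missing ingredient is already in your (iv), provided you prove it first and as a convexity statement.
\begin{enumerate}
\item[(a)] The universal cover of $(\mathcal C,\Phi^*g_h)$ is smooth and Cartan--Hadamard, so collisions play no role. There $\Phi^{-1}(\overline\Omega)$ lifts to a closed strip $S$. Both boundary curves of $S$ are locally strictly convex, because the deck involution $\mathfrak d$ is an isometry.
\item[(b)] Fix $p\in\operatorname{int}S$ and consider the set of $p'\in\operatorname{int}S$ whose geodesic segment $[p,p']$ lies in $S$. This set is non-empty. It is closed by continuous dependence of geodesics on their endpoints. It is open because any contact of $[p,p']$ with $\partial S$ occurs at an interior point of the segment, must be tangential, and is then excluded by Definition~\ref{def:gh_convex}. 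By connectedness and passing to the closure, $S$ is geodesically convex.
\item[(c)] Now (ii) is immediate. Suppose $X^h_\pm$ were tangent or outward at $q$. Then the leaf $\ell$ issued from $q$ would leave $S$ for small $t>0$. Yet for $T$ large, $\ell(T)\in S$, and $\ell|_{[0,T]}$ is the unique geodesic joining two points of $S$, hence lies in $S$. The same reasoning shows that the whole forward leaf stays in $\overline\Omega$.
\item[(d)] Item (iv) then follows by projection, the geodesic remaining in $\{\xi>0\}$ as in Proposition~\ref{prop:uniqueness}.
\end{enumerate}
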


\begin{proof}
  Item (i) follows directly from Definition~\ref{def:gh_convex}: a non-trivial
  geodesic segment contained in $\overline\Omega$ and tangent at $q$ would meet
  $\overline\Omega\cap V$ in points other than $q$.

  To prove (ii), assume by contradiction that $D_h^+\neq \mathbb{S}^1$. By continuity, there exists a stable branch through $\gamma(\theta)$ that is tangent to $\partial \Omega$. By convexity, it must lie outside $\Omega$ in a small neighbourhood of $\theta$.  However, far enough in
  forward time it lies in a small neighbourhood of $\Gamma_h$, hence in the
  interior of $\Omega$, and thus it must cross $\partial\Omega$ somewhere.  Let $\gamma(\theta')$ be the first such intersection point and, without loss of generality, assume that it is a transverse intersection.
  
  We can pass to the universal cover of $\mathcal{C}$ so that $\Omega$ pulls back to an infinite strip, which is simply connected. The stable branch we fixed lifts to a length minimizing curve between $\tilde{\gamma}(\theta)$ and $\tilde {\gamma}(\theta')$ and lies completely outside the strip. For any point of  the lift of $\partial \Omega$ between $\tilde{\gamma}(\theta)$ and $\tilde {\gamma}(\theta')$ there exists a unique length minimizer joining it to $\tilde{\gamma}(\theta)$. Since the stable branch is tangent to $\partial \Omega$ and the derivative of the distance is proportional to  
  \(
  \langle \dot{ \gamma}(\varphi),v(\varphi)\rangle 
  \),
  where $v(\varphi)$ is the velocity of the length minimizer joining $\tilde{\gamma}(\theta)$ to $\tilde{\gamma}(\varphi)$, we can conclude that near $\tilde{\gamma}(\theta)$ the derivative is positive whereas near $\tilde{\gamma}(\theta')$ is negative. So, there must be a zero. This, however, contradicts the strict convexity since the first zero would give a minimizer which is tangent to $\partial\Omega$ and contained in $\Omega$. The reasoning for $D_h^-$ is completely analogous. 

  For (iii), Proposition~\ref{prop:exit_time_finite} now shows that the only
  initial conditions with infinite return time are precisely the ones in  $\mathcal W_h^s$.  If
  $z\notin\mathcal W_h^s$, its first return is finite.  Since the orbit segment before
  that first return is contained in $\overline\Omega$, it can never be tangent to $\partial \Omega$ thanks to (i).  The implicit function theorem then gives continuity of
  the return time and of the reflected datum.  The assertion for the inverse
  follows by definition of the involution \eqref{eq:involution_alpha}.

  Finally, for (iv), lift the endpoints according to the prescribed homotopy
  class and apply the same argument used in point (ii) to the unique geodesic joining
  the two lifts.
\end{proof}

Of course, 
when $\partial\Omega$ is of class $\mathcal{C}^2$, strict $g_h$-convexity becomes
a pointwise inequality. We write $\kappa_{g_h}$ for the geodesic curvature of a
curve with respect to the metric $g_h$.

\begin{lemma}
  \label{lem:curvature_inequality}
  Let $\partial\Omega$ be of class $\mathcal{C}^2$ and let $\kappa_e$ be its
  Euclidean curvature with respect to the inward normal $N$. Then
  \begin{equation}
    \label{eq:kappa_gh}
    \kappa_{g_h}
    =\frac{1}{\sqrt{2(h+U)}}
      \left(\kappa_e-\frac{\langle\nabla U,N\rangle}{2(h+U)}\right),
  \end{equation}
  and $\Omega$ is strictly $g_h$-convex if and only if
  \begin{equation}
    \label{eq:admissibility}
    \kappa_e(\theta)\ >\ \frac{1}{2\bigl(h+U(\gamma(\theta))\bigr)}
      \left(\frac{m_1}{r_1^2}\langle\hat r_1,\nu\rangle
           +\frac{m_2}{r_2^2}\langle\hat r_2,\nu\rangle\right)
    \qquad\text{for every }\theta .
  \end{equation}
\end{lemma}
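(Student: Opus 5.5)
The plan has two parts. First I derive \eqref{eq:kappa_gh} from the standard conformal-change formula for geodesic curvature. Then I show that, for a $\mathcal C^2$ boundary, strict $g_h$-convexity in the sense of Definition~\ref{def:gh_convex} is equivalent to $\kappa_{g_h}>0$ pointwise, which unwinds to \eqref{eq:admissibility}.

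For the curvature formula, write $g_h=e^{2\phi}(dx_1^2+dx_2^2)$ with $e^{2\phi}=2(h+U)$, so that $\phi=\tfrac12\log(2(h+U))$. The classical formula for a conformal change of metric in dimension two reads
\[
\kappa_{g}=e^{-\phi}\bigl(\kappa_e-\langle\nabla\phi,N\rangle\bigr),
\]
where $N$ is the Euclidean unit normal used to orient $\kappa_e$. With the inward normal and the sign convention for which convex curves have positive curvature, this is the form I will use. It can be checked directly by parametrising the curve by Euclidean arclength, computing the covariant derivative of the $g$-unit tangent $e^{-\phi}T$ through the Christoffel symbols of a conformal metric, and pairing the result with the $g$-unit normal $e^{-\phi}N$. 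Since $\nabla\phi=\nabla U/(2(h+U))$ and $e^{-\phi}=(2(h+U))^{-1/2}$, this gives \eqref{eq:kappa_gh}. Finally, $\nabla U=-\frac{m_1}{r_1^2}\hat r_1-\frac{m_2}{r_2^2}\hat r_2$ and $\nu=-N$, so $-\langle\nabla U,N\rangle=-\bigl(\frac{m_1}{r_1^2}\langle\hat r_1,\nu\rangle+\frac{m_2}{r_2^2}\langle\hat r_2,\nu\rangle\bigr)$. Hence $\kappa_{g_h}>0$ is exactly \eqref{eq:admissibility}, because the prefactor $(2(h+U))^{-1/2}$ is positive. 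The boundary avoids the centres, so all quantities are smooth there.

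For the equivalence I compare $\partial\Omega$ near $q$ with the $g_h$-geodesic $\sigma_q$ tangent to it at $q$. I work in $g_h$-geodesic normal (Fermi) coordinates along $\sigma_q$, in which $\sigma_q$ is the axis $\{y=0\}$ and the inward side of $\Omega$ near $q$ is $\{y>0\}$ to first order. In these coordinates $\partial\Omega$ is the graph $y=u(s)$ with $u(0)=u'(0)=0$ and $u''(0)=\kappa_{g_h}(q)$, up to the positive metric normalisation at $q$.
\begin{enumerate}
\item[(a)] If $\kappa_{g_h}(q)>0$, then $u(s)>0$ for $0<|s|$ small. Hence $\sigma_q$ meets $\overline\Omega$ near $q$ only at $q$, which is the condition in Definition~\ref{def:gh_convex}.
\item[(b)] Conversely, suppose $\Omega$ is strictly $g_h$-convex. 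The tangent geodesic at $q$ is unique, so the condition forces $u\le0$ near $0$, giving $\kappa_{g_h}(q)\ge0$.
\end{enumerate}

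The main obstacle is the degenerate case $\kappa_{g_h}(q)=0$. A point of zero curvature can still satisfy the set-theoretic condition, for instance through a fourth-order contact $u\sim s^4$. So the \emph{only if} direction cannot literally give strict inequality, and the lemma's ``if and only if'' needs care. I will handle this in one of two ways, reading the lemma accordingly. The first option is to interpret the pointwise inequality as the $\mathcal C^2$ form of the convexity notion intended in the paper, noting that \eqref{eq:admissibility} always implies Definition~\ref{def:gh_convex} and that the definition implies the non-strict version of \eqref{eq:admissibility}. The second option is to strengthen Definition~\ref{def:gh_convex} to a quantitative version, with uniform quadratic separation on a fixed neighbourhood, under which the converse becomes strict by the same Fermi-coordinate expansion. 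I expect the sufficiency direction together with the explicit formula to carry all later uses, since Proposition~\ref{prop:no_grazing} only needs that implication.
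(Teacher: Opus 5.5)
Your proposal follows the paper's own route: the conformal-change formula for geodesic curvature with $\Lambda=2(h+U)$, then a second-order comparison with the tangent $g_h$-geodesic; your caveat is also correct, since Definition~\ref{def:gh_convex} only forces $\kappa_{g_h}\ge0$ (fourth-order contact satisfies it), so the paper's one-line appeal to ``the usual second-order comparison'' only yields \eqref{eq:admissibility} $\Rightarrow$ strict $g_h$-convexity $\Rightarrow$ the non-strict inequality, exactly as you say. Two small corrections: in (b) the definition forces $u\ge0$ (indeed $u>0$ for $s\neq0$), not $u\le0$; and the strict necessity direction is used later, because Lemma~\ref{lem:grazing_collar} sets $\kappa_0=\min_{\partial\Omega}\kappa_{g_h}>0$ from strict $g_h$-convexity alone, so one of your two repairs, not sufficiency by itself, is genuinely needed for the rest of the paper.
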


\begin{proof}
	It is well known that for a conformal metric $g = \Lambda g_e$ the geodesic curvature can be computed as
	 \begin{equation*}
		\kappa_{g}=\frac{1}{\sqrt\Lambda}
		\left(\kappa_e-\frac{\partial_N\Lambda}{2\Lambda}\right).
	\end{equation*}
	
  Let us apply the formula with $\Lambda=2(h+U)$, so that
  $\partial_N\Lambda/(2\Lambda)=\langle\nabla U,N\rangle/(2(h+U))$. Since
  $\nabla U=-\tfrac{m_1}{r_1^2}\hat r_1-\tfrac{m_2}{r_2^2}\hat r_2$ and
  $N=-\nu$, one gets
  $\langle\nabla U,N\rangle=\tfrac{m_1}{r_1^2}\langle\hat r_1,\nu\rangle
  +\tfrac{m_2}{r_2^2}\langle\hat r_2,\nu\rangle$, whence
  \eqref{eq:admissibility} is $\kappa_{g_h}>0$; this is equivalent to
  Definition~\ref{def:gh_convex} by the usual second-order comparison between a
  curve of positive geodesic curvature and the geodesic tangent to it.
\end{proof}

Let us observe that at any boundary point for which
$\langle\hat r_i,\nu\rangle>0$ for both centres, the right-hand side of
\eqref{eq:admissibility} is positive, so strict $g_h$-convexity is stronger
than ordinary Euclidean convexity there.  Thus the attractive force may bend a
trajectory enough for a strictly convex boundary to be grazed.

Of course, ellipses are strictly $g_h$-convex. This can be easily seen from the explicit integration of the system in Section \ref{sec:two_centres}: confocal ellipses are given by $\{\xi=\xi_0\}$ and the $\xi$ component of solutions can have only minima. We can also compute the curvature directly using Lemma \ref{lem:curvature_inequality}.

\begin{remark}[Confocal ellipses]
  \label{rem:ellipse_admissible}
  If $\partial\Omega=\{\xi=\xi_0\}$ then, by \eqref{eq:speed_on_shell},
  $\Phi^*g_h=2\bigl(W_1(\xi)-W_2(\eta)\bigr)(d\xi^2+d\eta^2)$, and \eqref{eq:kappa_gh} in the $(\xi,\eta)$-plane gives
  \[
    \kappa_{g_h}=\frac{1}{\sqrt{2(W_1-W_2)}}\cdot\frac{W_1'(\xi)}{2(W_1-W_2)},
    \qquad
    W_1'(\xi)=\sinh\xi\,(\mu_1+2h\cosh\xi)>0
    \quad\text{for }\xi>0 .
  \]
  Hence every confocal ellipse is strictly $g_h$-convex at every energy $h\ge0$;
  and since \eqref{eq:admissibility} is open in the $\mathcal{C}^2$ topology, so
  is every domain sufficiently close to one.
\end{remark}

The last observation we make is that $g_h$-convexity for strictly convex Euclidean domains is recovered in the high-energy regime. 
\begin{prop}[Strict convexity suffices at high energy]
  \label{prop:high_energy}
  Let $\Omega$ be bounded and strictly convex with $\mathcal{C}^2$ boundary and
  $[c_1,c_2]\subset\Omega$, and set
  $\kappa_{\min}=\min_\theta\kappa_e(\theta)>0$ and
  $d=\operatorname{dist}(\partial\Omega,\{c_1,c_2\})>0$. Then $\Omega$ is
  strictly $g_h$-convex for every $h>\bar h:=\mu_1/(2d^{2}\kappa_{\min})$.
\end{prop}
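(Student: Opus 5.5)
The approach is to verify inequality \eqref{eq:admissibility} of Lemma~\ref{lem:curvature_inequality} pointwise on $\partial\Omega$. Once it holds at every $\theta$, that lemma gives strict $g_h$-convexity. The idea is that the left-hand side is bounded below by $\kappa_{\min}$ uniformly in $h$, while the right-hand side decays like $1/h$.

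First, bound the numerator of the right-hand side. For every boundary point we have $r_i\ge d$ and $|\langle\hat r_i,\nu\rangle|\le1$. Hence
\[
\frac{m_1}{r_1^2}\langle\hat r_1,\nu\rangle+\frac{m_2}{r_2^2}\langle\hat r_2,\nu\rangle\le\frac{m_1+m_2}{d^2}=\frac{\mu_1}{d^2}.
\]
Only an upper bound is needed, so negative terms, which occur when a centre lies on the outward side, can simply be replaced by their absolute values.

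Second, bound the denominator. Since $U>0$, we get $2(h+U(\gamma(\theta)))>2h$. For $h>0$ this gives
\[
\frac{1}{2(h+U)}\Bigl(\frac{m_1}{r_1^2}\langle\hat r_1,\nu\rangle+\frac{m_2}{r_2^2}\langle\hat r_2,\nu\rangle\Bigr)\le \frac{\mu_1}{2hd^2}.
\]
If the bracket is negative, the left-hand side is already negative and there is nothing to prove. If it is non-negative, the displayed bound holds, and it is strictly less than $\kappa_{\min}\le\kappa_e(\theta)$ as soon as $h>\mu_1/(2d^2\kappa_{\min})=\bar h$. Therefore \eqref{eq:admissibility} holds at every $\theta$, and Lemma~\ref{lem:curvature_inequality} concludes.

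The only point needing care, and the main obstacle, is the use of the equivalence in Lemma~\ref{lem:curvature_inequality} between the pointwise curvature inequality and Definition~\ref{def:gh_convex}. That equivalence rests on the comparison between a curve of positive geodesic curvature and its tangent geodesic; since $\partial\Omega$ is $\mathcal C^2$, it applies directly here. One could also note that $U\ge\mu_1/\operatorname{diam}$-type lower bounds would sharpen $\bar h$, but the crude bound $U>0$ already yields the stated threshold.
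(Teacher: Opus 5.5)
Your proof is correct and follows the paper's argument: you bound the right-hand side of \eqref{eq:admissibility} by $\mu_1/(2d^2h)$ using $r_i\ge d$, $\langle\hat r_i,\nu\rangle\le1$ and $h+U>h$, observe that this is below $\kappa_{\min}\le\kappa_e$ once $h>\bar h$, and then invoke Lemma~\ref{lem:curvature_inequality}. Your separate handling of a negative bracket is a harmless extra detail that the paper leaves implicit.
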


\begin{proof}
  On $\partial\Omega$ one has $r_i\ge d$ and $\langle\hat r_i,\nu\rangle\le1$, so
  the right-hand side of \eqref{eq:admissibility} is at most
  $\mu_1/\bigl(2d^2(h+U)\bigr)\le\mu_1/(2d^2h)$.
\end{proof}

\subsection{Invariant curves near the boundary}
\label{subsec:lazutkin}

Theorem~\ref{thm:local_nonintegrability} says that the dynamics admits no analytic
first integral; the obstruction to integrability is the presence of a chaotic subset, built in Section \ref{sec:symbolic}. However, this subset is localized in phase space in a neighbourhood of the intersection of the stable graphs and their reflections and gives little information about the global behaviour of the billiard map $\mathcal{B}$. 
For instance, the invariant set $\Lambda_h$ of
Corollary~\ref{cor:semiconjugacy} carries positive entropy, yet nothing so far
prevents its orbits from visiting the whole of $M_h$, nor rules out that
$\mathcal{B}$ be ergodic.
In this subsection we show that the latter is not the case. Near each
of the two grazing circles the phase cylinder carries a Cantor family of
essential (i.e. non-contractible) invariant curves on which the dynamics is conjugated to a rotation. These curves are also called \textit{rotational invariant curve}. Since an essential curve separates the cylinder,
each of them bounds an invariant region which no orbit can leave. The phase
space thus splits into a regular part at its two ends and a part containing the
chaotic set in the middle. To achieve this result, we need additional assumptions: strict
$g_h$-convexity and $\mathcal{C}^7$ regularity of the boundary.

Throughout this subsection $\partial\Omega$ is parametrised by $g_h$-arclength,
it is of class $\mathcal{C}^7$, and $\Omega$ is strictly $g_h$-convex. We write
\begin{equation}
  \label{eq:grazing_angle}
  \varphi:=\frac\pi2-\alpha\ \in\ (0,\pi)
\end{equation}
for the angle between the velocity and the tangent, so that the grazing circles
of Definition~\ref{def:theta_alpha} are $\{\varphi=0\}$ and $\{\varphi=\pi\}$,
and we fix a collar
\begin{equation}
  \label{eq:def_collar}
  \mathcal{N}:=\bigl\{x\in\overline\Omega:\
  \operatorname{dist}_{g_h}(x,\partial\Omega)<\varepsilon_1\bigr\},
  \qquad
  \varepsilon_1<\operatorname{dist}_{g_h}\bigl(\partial\Omega,[c_1,c_2]\bigr),
\end{equation}
on which $g_h$ is a real-analytic Riemannian metric, the two centres lying at
positive distance from it.

The existence of the essential curves follows from the same arguments given in \cite{DiasCarneiroEtAl2024}. The only point that requires some care is to show that near-grazing orbits never leave the
collar. Once this is known, everything happens on $\mathcal{N}$ and the classical
theory applies unchanged.

\begin{lemma}[Near-grazing orbits stay in the collar]
  \label{lem:grazing_collar}
  There are $\varphi_0>0$ and $C>0$ such that every geodesic arc leaving
  $\gamma(\theta_1)$ into $\Omega$ with angle
  $0<\varphi_1<\varphi_0$ meets $\partial\Omega$ again at a point
  $\gamma(\theta_2)$ before leaving $\mathcal N$.  Moreover its $g_h$-length is
  at most $C\varphi_1$, its maximal $g_h$-distance from the boundary is at most
  $C\varphi_1^2$, and
  $|\theta_2-\theta_1|\le C\varphi_1$.  In particular the arc has winding
  number $k=0$ around $[c_1,c_2]$.  After possibly decreasing $\varphi_0$, the
  strips $\{0<\varphi<\varphi_0\}$ and
  $\{\pi-\varphi_0<\varphi<\pi\}$ are disjoint from the stable graphs.
\end{lemma}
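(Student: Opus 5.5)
The plan is to work in $g_h$-Fermi (geodesic normal) coordinates along $\partial\Omega$ on the collar $\mathcal N$. Since $\partial\Omega$ is $\mathcal C^7$ and the centres lie at positive distance from $\mathcal N$, the metric $g_h$ is smooth there. The map $(s,n)\mapsto\exp_{\gamma(s)}(nN_{g_h}(s))$, with $s$ the $g_h$-arclength and $n\in[0,\varepsilon_1)$, is then a $\mathcal C^5$ diffeomorphism onto $\mathcal N$, possibly after shrinking $\varepsilon_1$. In these coordinates the metric reads $dn^2+G(s,n)\,ds^2$ with $G(s,0)=1$ and $\partial_nG(s,0)=-2\kappa_{g_h}(s)$. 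By strict $g_h$-convexity (Lemma~\ref{lem:curvature_inequality}, together with compactness of $\partial\Omega$) we have $\kappa_{g_h}\ge\kappa_0>0$.

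\textbf{Step 1 (transverse equation).} For a unit-speed geodesic $t\mapsto(s(t),n(t))$ leaving $(s_1,0)$ with angle $\varphi_1$, the geodesic equations give $n(0)=0$, $\dot n(0)=\sin\varphi_1$ and
\[
\ddot n=\tfrac12\partial_nG\,\dot s^2=-\kappa_{g_h}(s)\dot s^2+O(n)\dot s^2 .
\]
As long as $n\le C\varphi_1^2$ and $t\le C\varphi_1$, we have $\dot s^2=1-O(\varphi_1^2)$, hence $\ddot n\le-\kappa_0/2$ for $\varphi_1$ small. A bootstrap (continuity) argument on the maximal time interval where $n\ge0$ and $n<\varepsilon_1$ then yields three facts: $\dot n$ decreases at rate at least $\kappa_0/2$, so $\dot n$ vanishes before time $2\sin\varphi_1/\kappa_0$; $n$ returns to $0$ before time $t_2\le 4\varphi_1/\kappa_0$; and $\max n\le \sin^2\varphi_1/\kappa_0\le C\varphi_1^2$. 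Choosing $\varphi_0$ with $C\varphi_0^2<\varepsilon_1$ closes the bootstrap, so the arc never leaves $\mathcal N$. Its $g_h$-length is $t_2\le C\varphi_1$, and since $|\dot s|\le 1/\sqrt{G}\le 2$, we get $|s_2-s_1|\le C\varphi_1$, which translates into $|\theta_2-\theta_1|\le C\varphi_1$ because the parametrisations are comparable. The first return is genuine: the arc reaches $n=0$ with $\dot n<0$, i.e.\ transversally.

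\textbf{Step 2 (winding and stable graphs).} The winding number vanishes because the arc stays in $\mathcal N$, and for $\varphi_1$ small its length is much shorter than the $g_h$-length of $\partial\Omega$. The arc is therefore homotopic in $\overline\Omega\setminus[c_1,c_2]$, with endpoints fixed, to a short boundary arc, which makes no turn around the segment. For the last claim, note that the stable graphs are the angles $\alpha_h^\pm(\theta)$ of $X^h_\pm$. The boundary is compact, and $D_h^\pm=\mathbb S^1$ by Proposition~\ref{prop:no_grazing}(ii), so the functions $\alpha_h^\pm$ are continuous with values in $(-\pi/2,\pi/2)$ and hence bounded away from $\pm\pi/2$. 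Alternatively, and more intrinsically, the data on $\mathcal W^s_h$ have infinite return time by Proposition~\ref{prop:exit_time_finite}, whereas Step 1 shows that data in the thin strips return in finite time. Either argument places the stable graphs outside the strips once $\varphi_0$ is reduced. The strip near $\varphi=\pi$ follows by the involution $\iota$, i.e.\ time reversal.

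\textbf{Main obstacle.} The delicate point is the bootstrap in Step 1: the bound $\ddot n\le-\kappa_0/2$ must be justified uniformly in $s_1$ before knowing that the arc stays in the region where the Taylor expansion of $G$ in $n$ holds. I would handle this by fixing a thin band $\{n\le\varepsilon_2\}$ on which $\partial_nG\le-\kappa_0$ and $G\ge1/4$, and checking that the exit time from that band exceeds the return time $4\varphi_1/\kappa_0$.
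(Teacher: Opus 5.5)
Your proof is correct and follows essentially the same route as the paper. Both arguments use a boundary-adapted normal coordinate, the concavity bound $\ddot n\le-\kappa_0/2$ coming from positive $g_h$-geodesic curvature, and the parabolic comparison giving $\max n\le\sin^2\varphi_1/\kappa_0$ and return time $O(\varphi_1)$; both then homotope the arc to the short boundary arc and bound $|\alpha_h^\pm|\le\alpha_1<\pi/2$ using $D_h^\pm=\mathbb S^1$ and compactness. Your bootstrap also supplies a justification the paper skips, namely that the concavity bound holds only for near-grazing arcs and not for every geodesic in the collar, and your return-time alternative for the last claim is valid as well.
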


\begin{proof}
  Work in coordinates $(s,r)$ in a fixed collar of the boundary,
  where $s$ is boundary arclength and $r\ge0$ is the inward normal distance.
  Define
  $\kappa_0:=\min_{\partial\Omega}\kappa_{g_h}>0$.  If
  $q(t)=(s(t),r(t))$ is a unit-speed geodesic issued from the boundary with
   angle $\varphi_1$, then
  \[
    r(0)=0,\qquad \dot r(0)=\sin\varphi_1,
    \qquad
    \ddot r(0)=-\kappa_{g_h}(\theta_1)\cos^2\varphi_1.
  \]
  The coefficients of the metric and their derivatives are uniformly bounded
  on the collar.  Hence, after shrinking the collar to a width $\ve_1$, one can assume
  $\ddot r(t)\le-\kappa_0/2$ as long as the geodesic remains in that collar.  
  Therefore,
  $r$ is strictly concave there.
  Let $\tau$ be the first exit time from the collar. Then, for some $a_0>0$ and for all $t\in[0,\tau]$, we have
  \[
  \ddot{r}(t)\le -\kappa_0/2, \quad 0<\dot{r}(0)\le a_0, \quad r(0)=0.
  \]
  By the standard comparison principle for ODEs, it holds that $r(t)\le a_0 t-t^2\kappa_0/4$ on $[0,\tau]$. However, the upper bound has a maximum at the point $2 a_0/\kappa_0$ and thus $r(t)\le a_0^2/\kappa_0$. Thus, since the estimates are uniform in the shooting angle $\varphi_1$, no trajectory that starts with a small enough angle can exit the collar neighbourhood. Choosing
  $\varphi_0$ so that $(\sin \varphi_0)^2<\kappa_0\varepsilon_1$ 
  proves that the whole arc lies in $\mathcal N$.
  
  Since $\sin \varphi_1\sim \varphi_1$, the argument also shows that there exist $C>0$  such that, first, $r(t)$ becomes positive and then returns to zero
  at some time
  \[
    0<t_2\le C\varphi_1,
  \]
  while $0\le r(t)\le C\varphi_1^2$ for $0\le t\le t_2$.  
  Since $|\dot s|$ is uniformly
  bounded in these coordinates, we have also
  $|\theta_2-\theta_1|\le C\varphi_1$.

  An arc contained in the collar is homotopic in
  $\overline\Omega\setminus[c_1,c_2]$ to the short boundary arc between its
  endpoints, hence has winding number $0$.  Finally set
  \[
    \xi_\partial:=\min_{\partial\Omega}\xi>0.
  \]
  By \eqref{eq:factorisation}, $a(\xi)\ge a(\xi_\partial)>0$ on the boundary,
  whereas $b$ is bounded.  Together with \eqref{eq:alpha_pm} and
  Proposition~\ref{prop:no_grazing}(ii), this yields a uniform
  $\alpha_1<\pi/2$ with $|\alpha_h^\pm|\le\alpha_1$.  Taking
  $\varphi_0<\pi/2-\alpha_1$ excludes both grazing strips from the stable
  graphs; the corresponding geometric estimate near $\varphi=\pi$ follows by
  time reversal.
  
\end{proof}

\begin{prop}[Lazutkin curves]
  \label{prop:lazutkin}
  Let $h\ge0$ and let $\Omega$ be a bounded domain with $[c_1,c_2]\subset\Omega$,
  with $\partial\Omega$ of class $\mathcal{C}^7$, and assume that $\Omega$ is strictly
  $g_h$-convex. Then there is a Cantor set
  $\mathcal{K}\subset(0,\tfrac12)\setminus\mathbb{Q}$ accumulating at $0$ such
  that for every $\varrho\in\mathcal{K}$ the billiard map $\mathcal{B}$ has a
  rotational invariant curve $C_\varrho\subset M_h$ with rotation number
  $\varrho$, on which $\mathcal{B}$ is conjugate to the rotation by $\varrho$.
  The $C_\varrho$ are graphs over $\mathbb{S}^1$ and accumulate on the grazing
  circle $\{\varphi=0\}$; the involution $\iota$ of
  \eqref{eq:involution_alpha} gives the corresponding family at
  $\{\varphi=\pi\}$.
\end{prop}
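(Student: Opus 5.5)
The plan is to reduce the statement to Lazutkin's theorem for billiards in a Riemannian metric, in the form developed in \cite{DiasCarneiroEtAl2024}. We apply it to the metric $g_h$ restricted to the collar $\mathcal N$ of \eqref{eq:def_collar}, where $g_h$ is real-analytic and the centres are at positive distance. First we fix $\varphi_0$ as in Lemma~\ref{lem:grazing_collar}. On the strip $\mathcal S:=\{0<\varphi<\varphi_0\}$ that lemma shows that the billiard map $\mathcal B$ is well defined, with no trapping and no tangential returns. Indeed, by Proposition~\ref{prop:no_grazing}(i) there is no tangential return, and the strip is disjoint from $\mathcal W^s_h$. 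Moreover, each arc issued from $\mathcal S$ lies in $\mathcal N$, has $g_h$-length $O(\varphi_1)$ and winding number zero. Hence on $\mathcal S$ the map $\mathcal B$ coincides with the billiard map of the metric $g_h$ on a thin annulus bounded by $\partial\Omega$. That map depends only on the geometry of $(\mathcal N,g_h)$ and not on the collision structure, and it extends continuously to $\{\varphi=0\}$ as the identity.

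Next we establish the standard ingredients. The map $\mathcal B$ is exact symplectic on $\mathcal S$ with respect to $\cos\varphi\,d\theta\wedge d\varphi$, with generating function the $g_h$-distance $L(s_1,s_2)$ between boundary points, where $s$ is the $g_h$-arclength. This is the Jacobi--Maupertuis principle, valid because the arcs are the unique short geodesics in $\mathcal N$. Using $\mathcal C^7$ regularity of $\partial\Omega$ and the strict positivity $\kappa_0=\min\kappa_{g_h}>0$ from Lemma~\ref{lem:curvature_inequality}, we expand $L$ near the diagonal exactly as in the Euclidean case. Geodesic normal coordinates along the boundary replace the Euclidean chord expansion, and the metric curvature terms enter only at higher order. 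This gives
\[
\mathcal B(s,\varphi)=\bigl(s+2\kappa_{g_h}(s)^{-1}\varphi+O(\varphi^2),\ \varphi+O(\varphi^2)\bigr),
\]
with the $O(\varphi^2)$ coefficients made explicit to the order required by Lazutkin.

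We then pass to Lazutkin coordinates
\[
x=c\int_0^s\kappa_{g_h}^{-2/3},\qquad y=4c^{-1}\kappa_{g_h}(s)^{-1/3}\sin(\varphi/2),
\]
so that $\mathcal B$ becomes $(x+y+O(y^3),\,y+O(y^4))$, a small perturbation of an integrable twist map. The finite regularity $\mathcal C^7$ of the boundary yields enough smoothness of this normal form, as in \cite{DiasCarneiroEtAl2024}. Moser's twist theorem, or its finitely differentiable version used there, then gives a Cantor family of invariant graphs $C_\varrho$ with Diophantine rotation numbers $\varrho\in\mathcal K$ accumulating at $0$. The dynamics on each $C_\varrho$ is conjugate to the rotation by $\varrho$. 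The family at $\{\varphi=\pi\}$ follows by applying $\iota$, since $\iota\mathcal B\iota=\mathcal B^{-1}$ and invariant curves of $\mathcal B^{-1}$ are invariant curves of $\mathcal B$.

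The main obstacle is the second step: checking that the $g_h$-geodesic billiard in the collar has the same asymptotic expansion near grazing as in \cite{DiasCarneiroEtAl2024}, with error terms that are uniform. The key inputs are Lemma~\ref{lem:grazing_collar}, which confines the relevant orbits to $\mathcal N$ where the metric is analytic, and the bound $\kappa_{g_h}\ge\kappa_0>0$. Once these hold, the expansion is the metric-billiard computation of that reference, and we would invoke it rather than redo it.
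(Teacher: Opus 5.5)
Your proposal follows essentially the same route as the paper: confine near-grazing orbits to the collar $\mathcal N$ via Lemma~\ref{lem:grazing_collar}, identify $\mathcal B$ there with the $g_h$-billiard generated by the short-chord length, take the near-grazing expansion from \cite{DiasCarneiroEtAl2024}, apply a finitely differentiable KAM theorem (the paper cites Douady's Corollary~II-2 for the version you allude to), and transfer the family to $\{\varphi=\pi\}$ with $\iota$. Two of your formulas need correcting, although neither affects the structure of the argument. First, with $\varphi$ the angle to the tangent, the invariant form is $\sin\varphi\,d\theta\wedge d\varphi$ (its primitive is $\cos\varphi\,d\theta$), not $\cos\varphi\,d\theta\wedge d\varphi$. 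Second, Lazutkin's coordinates are $x=c\int_0^s\kappa_{g_h}^{2/3}$ and $y=4c\,\kappa_{g_h}^{-1/3}\sin(\varphi/2)$: with your $\kappa_{g_h}^{-2/3}$ one gets $x_1-x\approx 2c\,\kappa_{g_h}^{-5/3}\varphi$, which is not $y$ to leading order unless $\kappa_{g_h}$ is constant.
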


\begin{proof}
  By Lemma~\ref{lem:grazing_collar} the restriction of $\mathcal{B}$ to
  $\{0\le\varphi<\varphi_0\}$ is a well-defined map of a half-open annulus, all of
  whose orbits are contained in the collar $\mathcal{N}$, on which $g_h$ is a
  real-analytic Riemannian metric and $\partial\Omega$ is a $\mathcal{C}^7$ curve
  of strictly positive $g_h$-geodesic curvature. The hypotheses of
  \cite{DiasCarneiroEtAl2024} are therefore met on $\mathcal{N}$, and we may
  apply their results verbatim.

  By \cite[Theorem~1 and Lemma~3.2]{DiasCarneiroEtAl2024}, the local
  billiard map in the collar is a conservative twist map.  For nearby boundary
  points it is generated by the $g_h$-length $S(\theta_1,\theta_2)$ of the
  unique short geodesic chord, with
  $\partial_1S=-\cos\varphi_1$, $\partial_2S=\cos\varphi_2$ and
  $\partial_{12}S>0$; it preserves
  $d\theta\wedge\sin\varphi\,d\varphi$. By
  \cite[Proposition~3.4]{DiasCarneiroEtAl2024},
  \begin{equation}
    \label{eq:lazutkin_expansion}
    \theta_2=\theta_1+\frac{2\varphi_1}{\kappa_{g_h}(\theta_1)}+o(\varphi_1),
    \qquad
    \varphi_2=\varphi_1+o(\varphi_1),
  \end{equation}
  so the twist is uniform up to the grazing circle. With
  \eqref{eq:lazutkin_expansion} and $\mathcal{C}^7$ regularity, Douady's
  Corollary~II-2 \cite{Douady1982} yields the Cantor family of invariant curves,
  as in \cite[Theorem~2]{DiasCarneiroEtAl2024}; that they are graphs is
  Birkhoff's invariant curve theorem \cite{Birkhoff1932}.
\end{proof}

\begin{remark}
  \label{rem:lazutkin_examples}
  By Remark~\ref{rem:ellipse_admissible} every confocal ellipse is strictly
  $g_h$-convex at every energy, and \eqref{eq:admissibility} is an open condition;
  by Proposition~\ref{prop:high_energy} every strictly convex domain is strictly
  $g_h$-convex for $h>\bar h$. Proposition~\ref{prop:lazutkin} therefore applies
  to every $\mathcal{C}^7$ domain sufficiently close to a confocal ellipse, at
  every energy, and to every strictly convex $\mathcal{C}^7$ domain at high
  energy. 
  \end{remark}

\section{Approximation of the invariant manifolds}
\label{sec:approximation}

In this section we prove that the stable and unstable manifolds of the
collision--reflection orbit $\Gamma_h$ from
Proposition~\ref{prop:collision_orbit} can be shadowed by trajectories that
approach $[c_1,c_2]$ and then return to $\partial\Omega$. These approximating
arcs are indexed by their winding number around the segment, and the approximation
error decays exponentially with this number. We also prove that, after choosing
the boundary interval appropriately, these arcs are genuine billiard arcs. This
last step is what allows us to avoid any convexity assumption on $\Omega$.

Throughout this section, $h\ge 0$ is fixed, and we work in the following
setting
\begin{center}
\begin{minipage}{0.92\textwidth}{\it 
	\centering
  $\Omega\subset\mathbb{R}^2$ is a bounded domain with $[c_1,c_2]\subset\Omega$ whose boundary $\partial\Omega$ is a simple
  closed curve of class $\mathcal{C}^1$ which 
   is \emph{not} an ellipse with foci $c_1$ and $c_2$.}
\end{minipage}
\end{center}

We use the same symbol
$f$ for the function
$\frac12(|x-c_1|+|x-c_2|)$, defined on the plane, and for its pullback to $\mathbb{S}^1$ via $\gamma$, given by 
$f(\theta)=f(\gamma(\theta))$. We write
\begin{equation}
  \label{eq:def_fbar}
  \bar f:=\min_{\mathbb{S}^1}f>1,
  \qquad
  \bar\xi:=\operatorname{arccosh}\bar f>0,
  \qquad
  E_{<c}:=\{x:\ f(x)<c\},
\end{equation}
the strict inequality on the minimum holding because $\partial\Omega$ is compact and disjoint
from the segment $[c_1,c_2]$.

The first step of the construction consists in defining a suitable family of trajectories and understanding their properties. We will take again a geometric perspective and apply the Jacobi--Maupertuis principle: trajectories with fixed energy $h$, up to
a reparametrization of time, are geodesics of the corresponding Jacobi--Maupertuis metric $g_h$.
\begin{definition}
  \label{def:sk}
  Fix $\theta_0\in\mathbb{S}^1$ and $k\in\mathbb{Z}$. For
  $\theta_1,\theta_2\in\mathbb{S}^1\setminus\{\theta_0\}$ we denote by
  $\sk$ the minimiser, whose existence and uniqueness are proved below, of the
  Jacobi--Maupertuis length $\mathcal{L}_h$ among the curves in
  $\mathbb{R}^2\setminus[c_1,c_2]$ joining $\gamma(\theta_1)$ to
  $\gamma(\theta_2)$ and satisfying
  \[
    \sk\,\#\,\gamma\vert_{[\theta_2,\theta_1]}\ \sim\ \gamma^k ,
  \]
  where $\#$ denotes concatenation, $\gamma\vert_{[\theta_2,\theta_1]}$ is the
  boundary arc from $\theta_2$ to $\theta_1$ not containing $\theta_0$, and
  $\gamma^k$ is the $k$-th iterate of a loop generating
  $\pi_1\bigl(\mathbb{R}^2\setminus[c_1,c_2]\bigr)\cong\mathbb{Z}$.
\end{definition}

Three angles enter in the construction, with three different roles. An arc joining two boundary points
is not a loop, so it has no homotopy class of its own. To assign one, we close it
up along $\partial\Omega$. There are two ways of doing so, yielding closed loops whose winding number differs by one. Fixing $\theta_0$ selects one of them, namely the arc of $\partial\Omega$
running from the arrival point back to the departure point \emph{without crossing
	$\gamma(\theta_0)$}.

The angles $\theta_1$ and $\theta_2$ specify, respectively, the departure and arrival points on $\partial\Omega$. Finally, the integer $k$ counts the windings around
$[c_1,c_2]$ and its sign records the direction.

Note that the minimization is performed among curves in $\mathbb{R}^2\setminus[c_1,c_2]$, and
\emph{not} in $\overline\Omega\setminus[c_1,c_2]$. The fact that
$\sk$ nonetheless lies in $\overline\Omega$ is the content of
Proposition~\ref{prop:confinement} below. Existence and uniqueness are proved in Proposition~\ref{prop:uniqueness}. For the variational properties of minimisers in this setting, see
\cite{Bolotin1984,BolotinNegrini2001,SoaveTerracini2012,%
	BarutelloCanneoriTerracini2021,BaranziniCanneori2024}.

As a final remark, note that the curves $\sk$
depend on $h$, but since we work on a fixed energy shell we do not record this in
the notation. It remains now to check that Definition \ref{def:sk} is well posed.

\begin{prop}
  \label{prop:uniqueness}
  For all $\theta_1,\theta_2\in\mathbb{S}^1\setminus\{\theta_0\}$ and every
  $k\in\mathbb{Z}$, except for the trivial case
  $\theta_1=\theta_2$ and $k=0$, there exists exactly one non-constant
  energy-$h$ trajectory in $\mathbb{R}^2\setminus[c_1,c_2]$ joining
  $\gamma(\theta_1)$ to $\gamma(\theta_2)$ in the prescribed class, and it is
  the minimiser $\sk$.  In the trivial case, the unique length
  minimiser is the constant curve.
\end{prop}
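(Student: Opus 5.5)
The argument will be carried out on the regularised cylinder and then lifted to its universal cover, where homotopy classes of paths become endpoints. The set $\mathbb{R}^2\setminus[c_1,c_2]$ is identified, via $\Phi$, with the half-cylinder $\{\xi>0\}\subset\mathcal{C}$, which deformation retracts onto $\{\xi=\xi_0\}$. Its universal cover is the half-plane $\widetilde H=\{(\xi,s):\xi>0,\ s\in\mathbb{R}\}$, with deck group generated by $s\mapsto s+2\pi$. The prescribed class of Definition~\ref{def:sk} corresponds to a choice of lifts $\tilde q_1$ of $\gamma(\theta_1)$ and $\tilde q_2$ of $\gamma(\theta_2)$: we fix $\tilde q_1$ and take $\tilde q_2$ to be the lift reached by lifting $\gamma|_{[\theta_2,\theta_1]}^{-1}\#\gamma^k$. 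Energy-$h$ trajectories in the class are then, up to reparametrisation, exactly the $\Phi^*g_h$-geodesics in $\widetilde H$ from $\tilde q_1$ to $\tilde q_2$. This uses the Jacobi--Maupertuis principle and the fact that $\Phi$ is a local isometry from $(\{\xi>0\},\Phi^*g_h)$ onto $(\mathbb{R}^2\setminus[c_1,c_2],g_h)$.

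\emph{Uniqueness.} I would work in the universal cover $\widetilde{\mathcal C}\cong\mathbb{R}^2$ of the full cylinder. By Lemma~\ref{lem:neg_curvature}, $\Phi^*g_h$ is complete and negatively curved there, so $\widetilde{\mathcal C}$ is a Hadamard surface, and two points are joined by exactly one geodesic. Since $\widetilde H$ embeds isometrically as an open subset of $\widetilde{\mathcal C}$, it follows that two geodesics in $\widetilde H$ with the same endpoints $\tilde q_1,\tilde q_2$ coincide. This gives uniqueness of the trajectory in each class. In the trivial case $\tilde q_1=\tilde q_2$, the only geodesic loop is the constant one, since a Hadamard surface has no nonconstant geodesic loops.

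\emph{Existence.} This is the main obstacle, because $\widetilde H$ is not complete: its boundary $\{\xi=0\}$ is the lift of the segment, and a minimising sequence could in principle collapse onto it. I would prove that the unique $\widetilde{\mathcal C}$-geodesic $\sigma$ from $\tilde q_1$ to $\tilde q_2$ stays inside $\{\xi>0\}$. The plan is to apply Lemma~\ref{lemma:monotonicity_xi} to the trajectory $\sigma$, whose endpoints satisfy $\xi>0$.
\begin{itemize}
\item In case (i), $\xi$ changes sign exactly once.
\item In case (ii), $\xi$ never vanishes.
\item In case (iii), $\xi$ vanishes only along $\Gamma_h$ itself, which cannot join points with $\xi>0$.
\end{itemize}
It therefore suffices to exclude case (i). In that case $\xi(\text{start})>0$ while $\xi$ is strictly monotone, so $\xi(\text{end})$ would have the opposite sign. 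This contradicts the fact that both endpoints lie in $\{\xi>0\}$, where the fixed lift of the sheet $\{\xi>0\}$ is used. Hence $\sigma\subset\widetilde H$, and projecting it gives a trajectory in $\mathbb{R}^2\setminus[c_1,c_2]$ in the right class.

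\emph{Minimality.} It remains to check that this trajectory is the length minimiser. Any curve in the class lifts to a curve in $\widetilde H\subset\widetilde{\mathcal C}$ with the same endpoints. In the Hadamard surface $\widetilde{\mathcal C}$ the geodesic is the global length minimiser, so its length is at most that of the competitor, and $\sigma$ is admissible. Thus the minimum is attained by $\sigma$, which is $\sk$. The nontrivial classes yield $\tilde q_1\neq\tilde q_2$, so $\sk$ is non-constant. The only delicate point in this step is to make precise that $\widetilde H$ sits inside $\widetilde{\mathcal C}$ as the lift of one sheet, which holds because $\{\xi>0\}$ is a half of the cylinder and lifts to a half-plane.
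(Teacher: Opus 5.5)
Your proposal follows the paper's proof essentially step for step: Cartan--Hadamard on the universal cover of the full cylinder $(\mathcal C,\Phi^*g_h)$ gives uniqueness and minimality, and Lemma~\ref{lemma:monotonicity_xi} shows that the connecting geodesic never leaves $\{\xi>0\}$. The one slip is in case (i), where you cannot (and need not) exclude $K_0>\bar K_0$, because the lemma says the \emph{maximal} orbit crosses $\xi=0$ once, not that your finite segment does; the correct argument, which is the paper's, is that $\xi$ is strictly monotone along the segment and therefore stays between its two positive endpoint values.
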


\begin{proof}
  By Lemma~\ref{lem:neg_curvature} the pull-back
  $\Phi^*g_h$ is a real-analytic complete metric of strictly negative curvature
  on the cylinder $\mathcal C$, and $\pi_1(\mathcal C)\cong\mathbb Z$ is
  generated by a loop $\{\xi=\xi_0\}$.  Its universal cover is therefore a
  Cartan--Hadamard surface.  After choosing lifts of the two endpoints according
  to the prescribed homotopy class, there is a unique geodesic joining them,
  and this geodesic is the unique length minimiser, up to reparametrization.  If the two lifted endpoints
  coincide, which is precisely the trivial case $\theta_1=\theta_2$, $k=0$,
  this geodesic is constant.  In every other case it is non-constant and, after
  Maupertuis reparametrisation, is an energy-$h$ trajectory.

  It remains to check that every non-constant connecting geodesic stays in the
  half-cylinder $\{\xi>0\}$, which $\Phi$ maps diffeomorphically onto
  $\mathbb R^2\setminus[c_1,c_2]$.  By
  Lemma~\ref{lemma:monotonicity_xi}, if $K_0<\bar K_0$ the orbit is confined to
  one component of $\{|\xi|\ge\xi_*\}$, while if $K_0>\bar K_0$ the coordinate
  $\xi$ is strictly monotone; in the latter case, between two endpoints with
  positive $\xi$ it remains between their two positive values.  The level
  $K_0=\bar K_0$ consists of the periodic orbit and its separatrices, and a
  separatrix reaches $\xi=0$ only asymptotically, so it cannot escape $\{\xi>0\}$.  Hence the connecting geodesic remains in
  $\{\xi>0\}$, as claimed.
\end{proof}

\subsection{Shadowing of $W^s(\Gamma_h)$ and $W^s(\Gamma_h^{-1})$}
Without loss of generality, for the rest of this section we will assume that $c_1=-e_1$ and $c_2=e_1$.
Now, we discuss the behaviour of the curves $\sk$ as $\vert k \vert \to \infty$.
\begin{prop}
  \label{prop:approximation_stable_manifold}
  For any $\ve>0$, $T>0$ and $\theta_0\in\mathbb S^1$ there exists $k_0>0$
  such that, for every $|k|\ge k_0$,
  \begin{align*}
    \sup_{\theta_1,\theta_2\in\mathbb S^1\setminus\{\theta_0\}}
      \bigl\|\sk(t)-s_\infty^{\theta_1}(t)\bigr\|_{C^1[0,T]}
      &\le\ve,\\
    \sup_{\theta_1,\theta_2\in\mathbb S^1\setminus\{\theta_0\}}
      \bigl\|\skinv(t)-s_{-\infty}^{\theta_2}(t)\bigr\|_{C^1[0,T]}
      &\le\ve.
  \end{align*}
  Here $s^{\theta_1}_{\infty}$ is the branch of $W^{s}(\Gamma_h)$ issuing
  from $\gamma(\theta_1)$
  with initial regularised velocity $X^h_{\operatorname{sgn}k}$, and
  $s^{\theta_2}_{-\infty}$ is the corresponding branch of
  $W^{s}(\Gamma_h^{-1})$ at $\gamma(\theta_2)$.
\end{prop}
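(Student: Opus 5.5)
The plan is to work entirely in the regularised coordinates $(\xi,\eta)$ on the half-cylinder $\{\xi>0\}$, using the regularised time of the flow $\Psi^t$ of $K$. Both $\sk$ and the stable branches then solve the same smooth ODE on $\{K=0\}$, so $C^1[0,T]$-closeness of the curves reduces, by continuous dependence on initial data, to closeness of the initial regularised momenta $(p_\xi,p_\eta)$ at $\tilde\gamma(\theta_1)$. Since $\tilde\gamma(\mathbb S^1)$ is compact and contained in $\{\xi\ge\xi_\partial\}$, this dependence is uniform in $\theta_1$. So the statement amounts to showing that the initial velocity of $\sk$ converges to $X^h_{\operatorname{sgn}k}(\tilde\gamma(\theta_1))$, uniformly in $\theta_1,\theta_2$. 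The second estimate is the first one applied to the time-reversed arc. The reversed curve $\skinv$ traversed backwards is $s_{k,\theta_0}^{\theta_1,\theta_2}$-type data with the endpoints exchanged, and $-X^h_\pm$ gives the unstable directions. It therefore suffices to treat the first estimate.

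\textbf{Step 1: the separation constant tends to $\bar K_0$.} Because $\sk$ lies in $\{\xi>0\}$ (Proposition~\ref{prop:uniqueness}) and has winding number $k$, the coordinate $\eta$ must increase (or decrease) by about $2\pi|k|$ along it, up to $O(1)$ coming from the endpoints. I would split into the cases of Lemma~\ref{lemma:monotonicity_xi}. If $K_0>\bar K_0$, then $\xi$ is monotone and crosses $0$, which is impossible for two endpoints with $\xi>0$; so this case does not occur. The case $K_0=\bar K_0$ is excluded as well, since separatrix orbits never reach two boundary points in finite time in this configuration. Hence $K_0<\bar K_0$, and the arc has one turning point $\xi_{\min}$.

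The total regularised time is $T_{\delta_0}(K_0)+O(1)$, where the $O(1)$ is uniform because outside $\{\xi\le\delta_0\}$ one has $|p_\xi|$ bounded below, as $\rho$ is small. Moreover $|\dot\eta|=|p_\eta|=\sqrt{2(-K_0-W_2)}$, which is bounded above and below by \eqref{eq:pos_peta}. So the $\eta$-increment is comparable to the time. By Lemma~\ref{lem:transit_time} this gives $\frac1\lambda\log\frac1\rho\gtrsim |k|$, that is, $\rho\le Ce^{-c|k|}\to0$ uniformly in $\theta_1,\theta_2$.

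\textbf{Step 2: the initial momenta converge.} At $\tilde\gamma(\theta_1)$ the momentum of $\sk$ satisfies $p_\xi^2=2(K_0+W_1(\xi))$ and $p_\eta^2=2(-K_0-W_2(\eta))$. As $\rho\to0$ these tend to $a(\xi)^2$ and $b(\eta)^2$, uniformly on the compact boundary, where $a\ge a(\xi_\partial)>0$.

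The signs still have to be fixed. First, $p_\xi<0$ at departure, since the arc must descend to its turning point; it cannot start by ascending, because each orbit has a single turning point, which is a minimum, and the endpoint lies at positive $\xi$. Second, $\operatorname{sgn}p_\eta=\operatorname{sgn}k$ for $|k|$ large: $\eta$ is strictly monotone along the arc, since $p_\eta^2\ge 4m_2$ in the limit, and the winding is $k$. Therefore the initial velocity tends to $X^h_{\operatorname{sgn}k}(\tilde\gamma(\theta_1))$.

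\textbf{Step 3 and the main obstacle.} Continuous dependence on $[0,T]$ finishes the argument, with one check: the time domain $[0,T]$ lies inside the arc's lifetime for large $|k|$, which holds because that lifetime is at least of order $|k|$.

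The main difficulty is the uniformity of Step 1 as $\theta_1,\theta_2$ approach $\theta_0$. The homotopy normalisation changes by one winding there, but this only shifts $k$ by $\pm1$ and the $\eta$-increment by a bounded amount, so the estimate $\rho\le Ce^{-c|k|}$ still holds uniformly. The relation between winding number and $\eta$-increment should be made precise by lifting to the universal cover $\eta\in\mathbb R$. Both endpoint lifts stay in a fixed compact fundamental strip, so the difference of $\eta$ is $2\pi k+O(1)$.
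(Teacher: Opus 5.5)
Your overall route is the same as the paper's. You get exponential smallness of $\rho=|K_0-\bar K_0|$ from Lemma~\ref{lem:transit_time}, which is the content of Lemma~\ref{lem:rho_k}. The initial momenta converge because the radicands $a^2,b^2$ are bounded away from zero on the boundary. You then use continuous dependence on $[0,T]$ once the arc's lifetime diverges, and time reversal for the second estimate. The problem is Step~1, where the case analysis is justified incorrectly.

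For $K_0>\bar K_0$, $\xi$ is monotone and vanishes once along the \emph{maximal} orbit, but $\sk$ is only a piece of that orbit. With both endpoints at positive $\xi$, a monotone $\xi$ simply stays between the two endpoint values; this is exactly the case recorded in the proof of Proposition~\ref{prop:uniqueness}. So nothing forces a crossing of $\xi=0$, and the case is not excluded by topology. Likewise, in the paper's setting (no convexity), a separatrix segment with $K_0=\bar K_0$ can join two boundary points, e.g.\ a stable leaf that leaves $\overline\Omega$ and re-enters it. Both cases can only be ruled out for large $|k|$, and the argument is quantitative. Along such an arc $\xi$ is monotone and stays in $[\xi_b,\xi_B]$, where $p_\xi^2\ge 2\bigl(W_1(\xi_b)-W_1(0)\bigr)>0$. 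Hence the travel time, the $\eta$-increment, and therefore $|k|$, are uniformly bounded. A similar bounded-time argument shows that for large $|k|$ the arc must contain its turning point. You use this implicitly in Step~2 to get $p_\xi<0$ at departure.

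There is also a circularity. You estimate the total time as $T_{\delta_0}(K_0)+O(1)$, controlling the $O(1)$ because ``$|p_\xi|$ is bounded below outside $\{\xi\le\delta_0\}$ as $\rho$ is small.'' That presupposes the smallness of $\rho$ you are trying to prove, and Lemma~\ref{lem:transit_time} only applies for $0<\rho\ll1$. You need the intermediate step the paper supplies:
\begin{itemize}
\item the separation constants of trajectories through boundary points lie in a fixed compact interval;
\item if $K_0\le\bar K_0-\rho_0$, then $\xi_{\min}\ge c(\rho_0)>0$;
\item the travel time is then uniformly bounded, since the square-root singularity at the turning point is integrable, so again $|k|$ is bounded.
\end{itemize}
Only after this does large $|k|$ force $\rho\to0$, and your transit-time argument then gives $\rho\le Ce^{-c|k|}$. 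With these repairs, the rest of your proof goes through as in the paper.
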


We write, as in Section~\ref{sec:billiard_map},
\[
  \rho(\sk):=\bigl\vert K_0(\sk)-\bar K_0\bigr\vert\ \ge0,
  \qquad \bar K_0=-(\mu_1+h),
  \qquad \lambda=\sqrt{\mu_1+2h},
\]
and we note that, $\partial\Omega$ being compact and disjoint from
$[c_1,c_2]$, the endpoints obey uniform bounds
$0<\xi_b\le\xi(\theta_j)\le\xi_B$. To prove the proposition, we shall make use of the following lemma.

\begin{lemma}
  \label{lem:rho_k}
  There exist $C,c>0$ and $k_*>0$, depending only on $h,m_1,m_2$ and
  $\Omega$, such that, for $|k|\ge k_*$ and all
  $\theta_1,\theta_2\in\mathbb S^1\setminus\{\theta_0\}$,
  \begin{equation}
    \label{eq:rho_k_exp}
    K_0(\sk)<\bar K_0,
    \qquad
    \rho(\sk)\le C e^{-c|k|}.
  \end{equation}
  In particular $\sk$ has a unique interior turning point in $\xi$.  If
  $t_k^*$ denotes the time at which that turning point is reached when the arc
  is parametrised from $\gamma(\theta_1)$, then
  $t_k^*\to+\infty$ as $|k|\to\infty$, uniformly in the endpoints.
\end{lemma}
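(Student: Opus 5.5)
The plan is to use the separated structure of $K$ and to count how much $\eta$ advances along the arc. Lift $\sk$ to the strip $\{\xi>0\}$ of the universal cover of $\mathcal C$, so that $\eta$ becomes a real-valued function. The homotopy condition in Definition~\ref{def:sk} then says that the total increment $\Delta\eta$ along the arc equals $2\pi k+O(1)$. The $O(1)$ term is uniform in $\theta_1,\theta_2$, because closing up along the boundary arc that avoids $\theta_0$ changes $\eta$ by at most $2\pi$ plus a fixed constant. By \eqref{eq:speed_on_shell} the regularised speed is bounded above on $\Phi^{-1}(\overline\Omega)$. Along any energy-$h$ orbit one has $\dot\eta=p_\eta$ with
\[
p_\eta^2=2\bigl(-K_0-W_2(\eta)\bigr),
\]
and $W_2$ is bounded. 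Hence $|\dot\eta|\le C_1$ as long as $K_0$ stays in a bounded range.

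The first step is to show that the regularised transit time $T_k$ of $\sk$ tends to infinity. I would argue by contradiction. If $K_0$ stays bounded, then $|\dot\eta|\le C_1$, so $T_k\ge(2\pi|k|-C)/C_1$. If instead $|K_0|\to\infty$ along a subsequence, I would rule this out by length minimality. When $K_0\to+\infty$, $p_\xi^2=2(K_0+W_1)$ is large; by Lemma~\ref{lemma:monotonicity_xi}(i) the arc crosses $\xi=0$, which Proposition~\ref{prop:uniqueness} excludes. When $K_0\to-\infty$, $p_\xi^2\ge0$ forces $W_1(\xi)\ge -K_0$, so $\xi\to\infty$, leaving any bounded region in which the endpoints lie. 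Both cases are contradictions, so $K_0$ is bounded and $T_k\gtrsim|k|$.

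The second step is to locate $K_0$ relative to $\bar K_0$. Suppose $K_0\ge\bar K_0$. By Lemma~\ref{lemma:monotonicity_xi}(i),(iii) the coordinate $\xi$ is then monotone or the arc is a separatrix. A separatrix cannot join two points with $\xi>0$ in finite time and return. A monotone $\xi$ forces a sign change of $\xi$, and Proposition~\ref{prop:uniqueness} forbids that for endpoints on the $\{\xi>0\}$ sheet. Hence $K_0<\bar K_0$, and the arc has exactly one turning point $\xi_{\min}$. Outside $\{\xi\le\delta_0\}$ the time spent is bounded uniformly, since $|p_\xi|$ is bounded below there for $K_0$ near $\bar K_0$, and because $\xi$ moves monotonically between $\delta_0$ and $\xi_B$ on each leg. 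So $T_k=T_{\delta_0}(K_0)+O(1)$. Lemma~\ref{lem:transit_time} gives $T_{\delta_0}=\lambda^{-1}\log(1/\rho)+O(1)$ once $\rho$ is small. Combining this with $T_k\ge c'|k|$ yields
\[
\log(1/\rho)\ge \lambda c'|k|-C,
\]
that is, $\rho\le Ce^{-c|k|}$.

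The main obstacle is circularity: Lemma~\ref{lem:transit_time} needs $\rho$ small a priori, but smallness is what we want to prove. I would break the circle with a compactness argument. For $K_0$ in a compact subset of $(-\infty,\bar K_0)$ bounded away from $\bar K_0$, the transit time is uniformly bounded, since the turning point $\xi_{\min}$ stays away from $0$ and the integrand's square-root singularity is integrable. For such $K_0$, the arc therefore contradicts $T_k\to\infty$ once $|k|$ is large. This forces $\rho\to0$, after which the exponential bound follows from the previous step.

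Finally, $t_k^*$ is the time from $\xi(\theta_1)$ down to $\xi_{\min}$. It equals half of $T_{\delta_0}$ plus $O(1)$, hence it is at least $(2\lambda)^{-1}\log(1/\rho)-C$, which tends to infinity uniformly in the endpoints.
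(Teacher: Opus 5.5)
Your route is the paper's: you compare the $\eta$-increment $2\pi k+O(1)$ with the regularised transit time, identify that time with $\lambda^{-1}\log(1/\rho)+O(1)$ via Lemma~\ref{lem:transit_time}, force $\rho\to0$ by compactness, and obtain $t_k^*$ as half the transit.

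\textbf{The gap is in your second step, where you exclude $K_0\ge\bar K_0$.} The claim that a monotone $\xi$ forces a sign change of $\xi$ is false. Lemma~\ref{lemma:monotonicity_xi}(i) says that the \emph{maximal} orbit crosses $\xi=0$. The finite arc $\sk$, however, joins two points of $\{\xi>0\}$, so a monotone $\xi$ simply stays between $\xi(\theta_1)$ and $\xi(\theta_2)$. Likewise, a piece of separatrix can join two boundary points with different $\xi$ without ever returning.

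Your argument would therefore give $K_0<\bar K_0$ for every $k$ and every pair of endpoints, which is false in general. Suppose $\partial\Omega$ has a short piece running nearly along a hyperbola $\{\eta=\eta_0\}$. The short arc joining two nearby points of it has small $p_\eta$, hence $K_0=-\tfrac12p_\eta^2-W_2(\eta)\ge-\tfrac12p_\eta^2-(\mu_2+h)>\bar K_0$.

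The same confusion appears in your $K_0\to+\infty$ subcase of the first step. There it is harmless, because that case is vacuous. Evaluate $K_0=\tfrac12p_\xi^2-W_1(\xi)$ at $\gamma(\theta_1)$, where both momenta are bounded by \eqref{eq:speed_on_shell}. This shows directly that $K_0$ ranges in a fixed compact interval, as the paper observes at the outset.

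\textbf{The fix is the quantitative argument the paper uses.} If $K_0\ge\bar K_0$, then $\xi$ is monotone along the arc and hence confined to $[\xi_b,\xi_B]$. There
$p_\xi^2=2(K_0+W_1(\xi))\ge 2\bigl(W_1(\xi_b)-W_1(0)\bigr)>0$.
So the regularised transit time is at most $(\xi_B-\xi_b)/\sqrt{2(W_1(\xi_b)-W_1(0))}$, uniformly in the endpoints. This contradicts your own lower bound $T_k\gtrsim|k|$ once $|k|$ is large. It also handles the separatrix case $K_0=\bar K_0$.

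With this replacement, $K_0<\bar K_0$ holds for $|k|\ge k_*$, as the statement asserts, and not for all $k$. The rest of your argument then goes through as in the paper: compactness forces $\rho\to0$, Lemma~\ref{lem:transit_time} gives $\rho\le Ce^{-c|k|}$, and $t_k^*$ is half the transit up to $O(1)$.
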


\begin{proof}
  Since the boundary is compact and disjoint from $[-e_1,e_1]$, its lift
  satisfies $0<\xi_b\le\xi(\theta)\le\xi_B$.  The separation constant of a
  trajectory meeting the boundary also ranges in a fixed compact interval,
  because on $\{K=0\}$ both momenta are bounded at such points by
  \eqref{eq:speed_on_shell}.

  We first show that  $K_0<\bar K_0$ and
  $K_0\to\bar K_0$.  If $K_0\ge\bar K_0$, then $\xi$ is monotone by
  Lemma~\ref{lemma:monotonicity_xi}; since by construction it cannot cross the segment $[-e_1,e_1]$, it remains
  in $[\xi_b,\xi_B]$, and there $|p_\xi|$ is bounded away from zero.  The travel
  time and hence the variation of $\eta$ are therefore uniformly bounded, so this can occur for bounded $k$ only.  
  
  If instead
  $K_0\le\bar K_0-\rho_0$ for some fixed $\rho_0>0$, the turning point
  satisfies $\xi_{\min}\ge c(\rho_0)>0$.  As done in Lemma~\ref{lem:transit_time}, one can show that this gives a uniform control on the time spent in the strip $[\xi_{\min},\xi_B]$ by $\sk$. In turn, this gives a bound on the variation of $\eta$ and on $k$.
  Consequently, $|k|$ large enough implies
  $K_0<\bar K_0$ and that $\bar K_0-K_0$ is arbitrarily small.

We deal now with the exponential bound. 
  Fix $0<\delta_0<\xi_b$.  We have just mentioned that for $\rho$ small (i.e. $k$ large) the curve $\sk$ enters the annulus $\{0<\xi< \delta_0\}$. The time that $\sk$ takes to travel from $\partial \Omega$ to $\{\xi=\delta_0\}$ is bounded by some uniform constant. However, thanks to Lemma~\ref{lem:transit_time}, the time spent inside
  $\{\xi\le\delta_0\}$ grows like
  \[
    T_{\delta_0}(K_0)=\frac1\lambda\log\frac1\rho+O(1).
  \]
  Moreover, for $K_0$ close to $\bar K_0$,
  $p_\eta^2/2=-K_0-W_2(\eta)$ is bounded above and below by positive constants,
  uniformly in $\eta$.  Thus the total variation of $\eta$ is comparable with
  the time spent inside $\{\xi\le \delta_0\}$.  By the definition of $\sk$, that variation
  equals $2\pi|k|+c$, where $c\in [-\pi,\pi]$.
  Hence
  \[
    |k|\le C_1\log\frac1\rho+C_2,
  \]
  which is equivalent, after changing the constants, to
  \eqref{eq:rho_k_exp}.

  Finally, the time needed to go from $\xi=\delta_0$ to the turning point is one half of the time spent in $\{\xi\le \delta_0\}$, up to a uniformly bounded term.
  Therefore $t_k^*\to+\infty$ uniformly as $|k|\to\infty$.
\end{proof}

\begin{proof}[Proof of Proposition~\ref{prop:approximation_stable_manifold}]
  We work on $\{K=0\}$, in regularised time.  For $|k|$ large,
  Lemma~\ref{lem:rho_k} gives $K_0(\sk)<\bar K_0$ and we can assume its $\xi$ component to be decreasing.  Both $\sk$ and
  $s^{\theta_1}_\infty$ start at the same point and evolve under the same
  real-analytic vector field: the Hamiltonian vector field of $K$, denoted by $X_K$. Their initial momenta, as functions of the separation constant, are
  \[
    p_\xi(0)=-\sqrt{2\bigl(K_0+W_1(\xi(\theta_1))\bigr)},
    \qquad
    p_\eta(0)=\operatorname{sgn}(k)
      \sqrt{2\bigl(-K_0-W_2(\eta(\theta_1))\bigr)}.
  \]
  At $K_0=\bar K_0$ the two radicands are uniformly bounded away from zero on
  the boundary, as observed in  \eqref{eq:factorisation} and \eqref{eq:pos_peta}.  Hence, the
  initial momentum depends Lipschitz-continuously on $K_0$, uniformly in
  $\theta_1$, and
  \[
    |z_{\sk}(0)-z_{s_\infty}(0)|\le C\rho(\sk).
  \]

  For a fixed $T$, the family of stable arcs
  $s^{\theta_1}_\infty([0,T])$, with $\theta_1\in\mathbb{S}^1$, lies in a
  compact subset of $\{\xi>0\}$.  By the uniform divergence of $t_k^*$ in
  Lemma~\ref{lem:rho_k}, the same is true for the corresponding initial pieces
  of $\sk$ when $|k|$ is large.  Uniform continuous dependence and Gronwall's
  inequality therefore give
  \[
    \|z_{\sk}-z_{s_\infty}\|_{C^0[0,T]}
      \le C(T)\rho(\sk),
  \]
  uniformly in both endpoints.  Since $\dot z=X_K(z)$, the same estimate gives
  $C^1$ convergence of the configuration components.  Using
  Lemma~\ref{lem:rho_k} yields the exponential bound
  \[
    \|\sk-s^{\theta_1}_\infty\|_{C^1[0,T]}
      \le C(T)e^{-c|k|}.
  \]
  The second estimate follows by applying the same argument to the time-reversed
  arc, with the endpoints exchanged.
\end{proof}

\subsection{The stable manifolds at minimum points}
\label{subsec:confinement}

The arcs $\sk$ were defined by an unconstrained minimisation, thus, nothing so far
prevents them from leaving $\Omega$. We now show that, if we restrict to initial conditions in a suitably chosen segment of the boundary near the minimum set of $f$, they do
not. We remark that no convexity hypothesis on $\partial\Omega$ is needed beyond the standing ones.
We begin with some simple but useful facts.

\begin{lemma}[No immediate return to the boundary]
  \label{lem:no_return}
  Let $\alpha_0<\pi/2$.
  Then there exists $t_1>0$ such that every orbit of $\Psi^t$ issued from
  $\gamma(\theta)$, $\theta\in \mathbb{S}^1$, with initial velocity making an angle at
  most $\alpha_0$ with the inward normal $N(\theta)$, satisfies
  \[
    \Phi(\Psi^t z)\in\Omega,
    \qquad 0<t\le t_1 .
  \]
\end{lemma}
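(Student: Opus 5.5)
The plan is a uniform local-graph argument near $\partial\Omega$. It uses only the $\mathcal C^1$ regularity of the boundary, its compactness, and the fact that $\partial\Omega$ stays at positive distance from the centres.

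\emph{Step 1: uniform local graphs.} Since $\gamma$ is $\mathcal C^1$ on the compact $\mathbb S^1$, the unit tangent $T(\theta)$ is uniformly continuous. Let $\omega$ be its modulus of continuity, expressed in terms of distance in the plane along $\partial\Omega$. Then there is $r_0>0$, independent of $\theta$, with the following property. In orthonormal coordinates $(u,v)$ centred at $\gamma(\theta)$, with $u$ along $T(\theta)$ and $v$ along $N(\theta)$, the set $\partial\Omega\cap([-r_0,r_0]\times[-r_0,r_0])$ is the graph $\{v=g_\theta(u)\}$ of a $\mathcal C^1$ function. It satisfies $g_\theta(0)=g_\theta'(0)=0$ and
\[
|g_\theta(u)|\le \epsilon(r_0)\,|u|, \qquad \epsilon(r_0)\to0 \text{ as } r_0\to0.
\]
Moreover, $\Omega$ meets this box exactly in the region $\{v>g_\theta(u)\}$. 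This is the standard implicit-function, collar description of a compact $\mathcal C^1$ Jordan curve, and the uniformity comes from compactness. We choose $r_0$ small enough that $\epsilon(r_0)\le\tfrac12\cos\alpha_0$ and that the $r_0$-neighbourhood of $\partial\Omega$ avoids $[c_1,c_2]$.

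\emph{Step 2: uniform bounds on the orbits.} Work in the original plane, pushing the regularised flow forward by $\Phi$. On the closed $r_0$-neighbourhood $\mathcal N_0$ of $\partial\Omega$, the map $\Phi$ and the time change $r_1r_2$ are smooth and bounded above and below. The energy-$h$ vector field is also smooth there. By \eqref{eq:speed_on_shell}, on $\mathcal N_0$ the speed $|\dot x|$ of $x(t)=\Phi(\Psi^tz)$ satisfies $0<\sigma\le|\dot x|\le\Sigma$ and $|\ddot x|\le A$, with constants independent of the initial datum. Hence, as long as the orbit stays in $\mathcal N_0$, we have, in the coordinates of Step 1,
\[
x(t)=\gamma(\theta)+t\,\dot x(0)+R(t),\qquad |R(t)|\le \tfrac{A}{2}t^2 .
\]
This holds in particular for $t\le t_1':=r_0/(2\Sigma)$, since up to that time the orbit cannot leave the ball of radius $r_0/2$ around $\gamma(\theta)$.

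\emph{Step 3: comparison.} The angle condition gives $v(t)\ge t\sigma\cos\alpha_0-\tfrac A2t^2$ and $|u(t)|\le t\Sigma+\tfrac A2t^2$. Therefore
\[
v(t)-g_\theta(u(t))\ \ge\ t\sigma\cos\alpha_0-\tfrac12\cos\alpha_0\bigl(t\Sigma+\tfrac A2t^2\bigr)-\tfrac A2 t^2 .
\]
The $\Sigma$ term is the one place where care is needed. I would fix it by choosing $r_0$ so small that $\epsilon(r_0)\le \tfrac{\sigma}{4\Sigma}\cos\alpha_0$ rather than $\tfrac12\cos\alpha_0$. This is legitimate because $\sigma$ and $\Sigma$ only increase or decrease mildly as $r_0$ shrinks, so we can fix them on a larger collar first. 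With this choice the right-hand side is at least $\tfrac{\sigma}{2}\cos\alpha_0\,t-Ct^2$, which is positive for $0<t\le t_1'':=\sigma\cos\alpha_0/(2C)$. Setting $t_1=\min(t_1',t_1'')$, converted back to regularised time through the bounded factor $r_1r_2$, the point $x(t)$ lies in the box and strictly above the graph, hence in $\Omega$.

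The main obstacle is Step 1: producing the uniform graph box and the uniform sublinear bound $|g_\theta(u)|\le\epsilon|u|$ from mere $\mathcal C^1$ regularity. It requires a careful use of uniform continuity of $T$, together with the observation that the local side $\{v>g_\theta\}$ is the interior side, which follows from the orientation of the inward normal. Everything else is a Taylor estimate with constants that are uniform thanks to compactness.
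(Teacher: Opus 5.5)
Your argument is correct, but it takes a more hands-on, quantitative route than the paper.

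\textbf{The paper's route.} It takes a $\mathcal C^1$ defining function $\rho$ with $\Omega=\{\rho>0\}$ and $\nabla\rho$ pointing inward on $\partial\Omega$. The angle condition and compactness of the set of admissible initial states give $\frac{d}{dt}\rho(\Phi(\Psi^tz))|_{t=0}=\langle\nabla\rho,v\rangle\ge c>0$ uniformly. Continuity of the flow then keeps this derivative positive on a uniform interval $[0,t_1]$, so $\rho$ becomes and stays positive.

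\textbf{Your route.} You build uniform graph boxes explicitly and compare a second-order Taylor expansion of the orbit with the sublinear bound $|g_\theta(u)|\le\epsilon|u|$.

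\textbf{Comparison.} Both proofs rest on the same geometric input. Your Step 1 is essentially what one needs to construct the paper's $\rho$ for a merely $\mathcal C^1$ boundary: the signed distance need not be $\mathcal C^1$, so $\rho$ has to be patched together from local functions of the form $v-g_\theta(u)$. The paper's version is shorter, because compactness and uniform continuity absorb all the constants at once. Yours yields an explicit $t_1$ in terms of $\sigma$, $\Sigma$, $A$, $\cos\alpha_0$ and $r_0$.

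\textbf{A simplification in Step 3.} The difficulty with the $\Sigma$ term is self-inflicted. Keep $|\dot x(0)|$ in both estimates:
\[
v(t)\ge t|\dot x(0)|\cos\alpha_0-\tfrac A2t^2,\qquad |u(t)|\le t|\dot x(0)|+\tfrac A2t^2 .
\]
The linear term is then $t|\dot x(0)|(\cos\alpha_0-\epsilon)$, so your original choice $\epsilon\le\tfrac12\cos\alpha_0$ already suffices. No detour through $\sigma/\Sigma$ or a preliminary larger collar is needed.
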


\begin{proof}
  Since $\partial \Omega$ is compact and disjoint from the centres, the flow is
  smooth on a neighbourhood of the corresponding set of initial states.
  Let $\rho$ be a $C^1$ defining function for $\Omega$,
  chosen so that $\Omega=\{\rho>0\}$ and $\nabla\rho$ points inward on
  $\partial\Omega$.  The angle condition and compactness give
  \[
    \frac{d}{dt}\rho\bigl(\Phi(\Psi^t z)\bigr)\Big|_{t=0}
    =\langle \nabla\rho(\Phi(z)),v(z)\rangle \ge c>0
  \]
  uniformly over all admissible initial states.  By continuity of the flow,
  this inequality remains positive for $0\le t\le t_1$, after decreasing
  $t_1$ if necessary.  Hence
  $\rho(\Phi(\Psi^t z))>0$ for every $0<t\le t_1$.
\end{proof}

Recall that $\bar{f}$ denotes the minimum of the function $f$ on $\partial \Omega$. The following Lemma shows that, near the minimum set of $f$ on $\partial \Omega$, the fields $X^h_\pm$ are always uniformly transverse to the boundary and thus, Lemma \ref{lem:no_return} applies. We use the notation $\tilde{\gamma}$ for the curve satisfying $\Phi(\tilde \gamma) =\gamma$ and having positive $\xi$ coordinate.

\begin{lemma}
	\label{lem:cone_free}
	Let $A:=\{\theta\in\mathbb S^1:f(\theta)=\bar f\}$.  For every
	$\theta\in A$,
	\[
	\langle X^h_\pm(\tilde \gamma(\theta)),\tilde N(\theta)\rangle
	=a(\bar\xi)>0,
	\]
	and therefore
	$\tan\alpha_h^\pm(\theta)=\pm b(\eta(\theta))/a(\bar\xi)$ and
	$|\alpha_h^\pm(\theta)|<\pi/2$.  Consequently there exist a closed
	neighbourhood $J$ of $A$ and $\alpha_1<\pi/2$ such that
	$|\alpha_h^\pm(\theta)|\le\alpha_1$ for every $\theta\in J$.
\end{lemma}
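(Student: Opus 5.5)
The plan is to compute everything in the lifted picture on the sheet $\{\xi>0\}$, using that $\Phi$ is conformal, so angles and normal directions can be read off in the Euclidean $(\xi,\eta)$-plane (as remarked after Definition~\ref{def:theta_alpha}). Write $\tilde\gamma(\theta)=(\xi(\theta),\eta(\theta))$. Since $f\circ\Phi=\cosh\xi$, the pulled-back function is $f(\theta)=\cosh\xi(\theta)$. Hence $A$ is exactly the set where $\xi(\theta)=\bar\xi$ attains its minimum, and at every $\theta\in A$ we have $\xi'(\theta)=0$. Consequently the unit tangent there is $\tilde T=(0,\pm1)$ and the unit normal is $(\pm1,0)$.

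The key step is to identify which of the two normals is inward, namely $\tilde N(\theta)=(-1,0)$. For this I will use the inner confocal ellipse of Figure~\ref{fig:inner_ellipse}. The open set $E_{<\bar f}=\{f<\bar f\}$ is the interior of a confocal ellipse, so it is connected and contains $[c_1,c_2]$. It is also disjoint from $\partial\Omega$, because $f\ge\bar f$ there. Since $[c_1,c_2]\subset\Omega$, connectedness gives $E_{<\bar f}\subset\Omega$. Lifting to $\{\xi>0\}$, the points $\tilde\gamma(\theta)+s(-1,0)$ with small $s>0$ have $\xi<\bar\xi$, so they lie in the lift of $E_{<\bar f}$ and hence in the lift of $\Omega$. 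Because $\partial\Omega$ is $\mathcal C^1$, the inward normal is characterised as the normal direction along which nearby points lie in $\Omega$. Therefore $\tilde N=(-1,0)$. This is the only genuinely geometric step, and I expect it to be the main (mild) obstacle. The care needed is to transfer the inward/outward property through the conformal chart and to use the $\mathcal C^1$ regularity correctly.

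With the normal identified, the computation is immediate from \eqref{eq:def_Xpm}. We get $\langle X^h_\pm,\tilde N\rangle=\langle(-a(\bar\xi),\pm b),(-1,0)\rangle=a(\bar\xi)$, which is positive by \eqref{eq:factorisation} since $\bar\xi>0$. Likewise $\langle X^h_\pm,\tilde T\rangle=\pm b(\eta(\theta))$, where the orientation of $\tilde T$ is absorbed into the sign convention. Formula \eqref{eq:alpha_pm} then gives $\tan\alpha^\pm_h(\theta)=\pm b(\eta(\theta))/a(\bar\xi)$, which is finite, so $|\alpha_h^\pm(\theta)|<\pi/2$ and $A\subset D_h^+\cap D_h^-$.

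For the final claim, I will use Lemma~\ref{lem:alpha_formula}, which says the sets $D_h^\pm$ are open and $\alpha_h^\pm$ is continuous on them. The set $A$ is compact, being a closed subset of $\mathbb S^1$. Since $b$ is bounded, $\alpha_2:=\max_{A}|\alpha_h^\pm|<\pi/2$. Fix $\alpha_1\in(\alpha_2,\pi/2)$. By continuity, the set $\{\theta\in D_h^+\cap D_h^-:\ |\alpha_h^\pm(\theta)|<\alpha_1\}$ is an open neighbourhood of $A$. Any closed neighbourhood $J$ of $A$ contained in it, for instance a finite union of closed intervals around the points of $A$ obtained by compactness, satisfies the required bound.
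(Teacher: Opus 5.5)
Your proof is correct and follows the paper's argument: $\xi'=0$ on $A$, the inward normal there is $-\partial_\xi$ because $E_{<\bar f}\subset\Omega$, a direct computation with $X^h_\pm=(-a,\pm b)$, and compactness of $A$ for the uniform bound on a closed neighbourhood $J$. You also supply details the paper leaves implicit, namely the connectedness argument giving $E_{<\bar f}\subset\Omega$ and the continuity/compactness construction of $J$; the sign in $\tan\alpha_h^\pm=\pm b/a(\bar\xi)$ corresponds to the counterclockwise orientation of $\gamma$ (for which $\tilde T=(0,1)$ at points of $A$), but this does not affect $|\alpha_h^\pm|<\pi/2$.
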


\begin{proof}
	Since $f=\cosh\xi$, every $\theta\in A$ satisfies
	$\xi(\theta)=\bar\xi$ and $\xi'(\theta)=0$.  Hence $\partial\Omega$ is
	tangent at $\gamma(\theta)$ to the confocal ellipse
	$\{\xi=\bar\xi\}$.  As $E_{<\bar f}\subset\Omega$, its inward normal is
	$-\partial_\xi$.  Therefore
	\[
	\langle X_\pm^h(\tilde\gamma(\theta)),\tilde N\rangle=a(\bar\xi)>0,
	\]
	since $X_\pm^h=(-a,\pm b)$.  The formula for $\alpha_h^\pm$ follows
	immediately, and compactness of $A$ gives the uniform estimate on a
	neighbourhood of $A$.
\end{proof}

In view of the Miranda argument of Section~\ref{sec:symbolic}, we explicitly note that a neighbourhood of the  \emph{minimum} set of $f$ provides the sign change of $f'$ we will require. 

\begin{lemma}
  \label{lem:choice_of_I}
  For every $\delta>0$ there exist $\theta_-<\theta_+$ such that, setting
  $I=[\theta_-,\theta_+]$,
  \begin{equation}
    \label{eq:choice_of_I}
    f'(\theta_-)<0<f'(\theta_+),
    \qquad
    \max_I |f'|<\delta,
    \qquad
    \max_I f\le \bar f+\delta .
  \end{equation}
\end{lemma}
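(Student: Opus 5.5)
The plan is to pick a single component of a sublevel set of $f$ and to get the two signs of $f'$ from the mean value theorem on either side of a minimum point. Here $f$ is a non-constant $\mathcal C^1$ function on $\mathbb S^1$ by Lemma~\ref{lemma:critical_point}(i), and $A=\{f=\bar f\}$ is its (closed, non-empty) minimum set. Every point of $A$ is a critical point of $f$, so $f'\equiv 0$ on $A$. A direct localisation near one point of $A$ can fail, because $f$ may be constant on one side of that point. I avoid this by working with a whole component of a sublevel set.

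\emph{Step 1: control of $f'$ on thin sublevel sets.} Fix $\delta>0$. I claim there is $\eta\in(0,\delta]$ with
\[
V_\eta:=\{\theta\in\mathbb S^1:\ f(\theta)<\bar f+\eta\}\subset\{|f'|<\delta\}.
\]
Suppose not. Then there are $\theta_n$ with $f(\theta_n)<\bar f+1/n$ and $|f'(\theta_n)|\ge\delta$. Passing to a subsequence, $\theta_n$ converges to a limit $\theta_\infty$. By continuity $f(\theta_\infty)=\bar f$, so $\theta_\infty\in A$ and $f'(\theta_\infty)=0$, while also $|f'(\theta_\infty)|\ge\delta$. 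This contradiction proves the claim. Shrinking $\eta$ further, I also impose $\eta<\max f-\bar f$, which is possible because $f$ is non-constant. Then $V_\eta\neq\mathbb S^1$.

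\emph{Step 2: a component arc and the mean value theorem.} The set $V_\eta$ is open, proper and contains $A$. Take $\theta^*\in A$ and let $W$ be the connected component of $V_\eta$ containing $\theta^*$. Since $V_\eta\neq\mathbb S^1$, $W$ is an open arc $(p,q)$ with $p\neq q$ (working in a lift to $\mathbb R$). Its endpoints lie outside $V_\eta$, so by continuity $f(p)=f(q)=\bar f+\eta$.

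Now apply the mean value theorem on the two sides of $\theta^*$. On $[p,\theta^*]$ we have $f(\theta^*)-f(p)=-\eta<0$, which gives $\theta_-\in(p,\theta^*)$ with $f'(\theta_-)<0$. On $[\theta^*,q]$ we have $f(q)-f(\theta^*)=\eta>0$, which gives $\theta_+\in(\theta^*,q)$ with $f'(\theta_+)>0$. Thus $\theta_-<\theta_+$ and $I=[\theta_-,\theta_+]\subset W\subset V_\eta$. On $I$ this yields $|f'|<\delta$ by Step 1, and $f<\bar f+\eta\le\bar f+\delta$, which are all three conditions in \eqref{eq:choice_of_I}.

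The only delicate point is Step 1, namely making the bound on $f'$ uniform on a whole sublevel set instead of only near a single point of $A$. This is exactly what rules out the degenerate situation where $A$ contains arcs or accumulating pieces. It follows from the compactness argument above, and only $\mathcal C^1$ regularity of $\gamma$ is used.
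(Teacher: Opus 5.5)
Your proof is correct and follows essentially the same route as the paper. Both take a thin sublevel set $\{f<\bar f+\eta\}$ on which $|f'|<\delta$, pick the component arc containing a minimum point, and apply the mean value theorem on each side of that point. Your Step 1 spells out the compactness argument behind the paper's choice $\{f<\bar f+\varepsilon\}\Subset U$. Your condition $\eta<\max f-\bar f$ makes explicit why that component is a genuine arc whose endpoints lie at level $\bar f+\eta$.
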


\begin{proof}
  Let $A=\{f=\bar f\}$.  Since $f\in C^1$ and $A$ is compact, there is a
  neighbourhood $U$ of $A$ such that
  \[
    |f'|<\delta,
    \qquad
    f<\bar f+\delta
    \quad\text{on }U.
  \]
  As $f$ is not constant, we may choose $0<\varepsilon<\delta$ so that
  $\{f<\bar f+\varepsilon\}\Subset U$.

  Let $(a,b)$ be a connected component of this sublevel set meeting $A$,
  and choose $\theta_*\in(a,b)$ with $f(\theta_*)=\bar f$.  Since
  \[
    f(a)=f(b)=\bar f+\varepsilon>\bar f,
  \]
  the mean value theorem yields
  \[
    \theta_-\in(a,\theta_*),\qquad
    \theta_+\in(\theta_*,b)
  \]
  with $f'(\theta_-)<0<f'(\theta_+)$.  Since
  $[\theta_-,\theta_+]\subset U$, the remaining two estimates in
  \eqref{eq:choice_of_I} follow immediately.
\end{proof}

We are now in a position to prove the main result of this section. 
\begin{prop}
  \label{prop:confinement}
  There exist $\delta_0>0$ and $k_1\in\mathbb{N}$, depending only on $h$,
  $m_1$, $m_2$ and $\Omega$, such that if $I$ is chosen as in
  Lemma~\ref{lem:choice_of_I} with $\delta<\delta_0$, then for all
  $\theta_1,\theta_2\in I$ and all $k:\vert k\vert \ge k_1$ the arc $\sk$ is contained in
  $\overline\Omega$, meets $\partial\Omega$ only at its endpoints, and does so
  transversally.
\end{prop}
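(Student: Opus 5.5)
I would split the arc $\sk$ into three pieces: an initial piece of bounded regularised time $[0,T]$, a final piece of the same length (parametrised backwards from $\gamma(\theta_2)$), and a middle piece. The middle piece is handled by the geometry of confocal ellipses, the two end pieces by Proposition~\ref{prop:approximation_stable_manifold} together with Lemmas~\ref{lem:no_return} and~\ref{lem:cone_free}.

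\emph{Middle piece.} By Lemma~\ref{lem:rho_k}, for $|k|$ large we have $K_0(\sk)<\bar K_0$. Hence, by Lemma~\ref{lemma:monotonicity_xi}(ii), $\xi$ has a single turning point along $\sk$, and $\xi$ is monotone decreasing before it and increasing after it. Consequently, as long as $\xi(t)\le\min(\xi(\theta_1),\xi(\theta_2))$, the point lies in the sublevel set $E_{<\bar f+\delta}$ of $f$. The delicate region is $E_{<\bar f+\delta}\setminus E_{<\bar f}$, where $\partial\Omega$ can enter.

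I would first choose a smaller level $\bar\xi_-<\bar\xi$, say $\cosh\bar\xi_-=\bar f-\eta_0$ with $\eta_0>0$ fixed. Then $\overline{E_{<\bar f-\eta_0}}\subset\Omega$ at positive distance from $\partial\Omega$. Every portion of $\sk$ with $\xi\le\bar\xi_-$ is therefore automatically in $\Omega$. It remains to control the two transit pieces, where $\xi$ runs monotonically from $\xi(\theta_j)\le\operatorname{arccosh}(\bar f+\delta)$ down to $\bar\xi_-$. Along the limiting stable branch $s^{\theta_1}_\infty$, the time needed to reach $\{\xi=\bar\xi_-\}$ is bounded by some $T$ uniform in $\theta_1\in I$, because $a$ is bounded below on $[\bar\xi_-,\xi_B]$. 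I fix this $T$ and then apply Proposition~\ref{prop:approximation_stable_manifold} with this $T$.

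\emph{End pieces.} These pieces must stay in $\Omega$ for $t\in(0,T]$. I would argue this first for the limiting branch $s^{\theta_1}_\infty$ with $\theta_1\in I$, and the branch is tangent to $X^h_\pm$. I expect this to be the main obstacle, because $\Omega$ is not convex.

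For small $t\le t_1$, Lemma~\ref{lem:cone_free} (taking $I\subset J$, possible for $\delta<\delta_0$) and Lemma~\ref{lem:no_return} keep the branch inside $\Omega$. For $t\in[t_1,T]$, $\xi$ decreases strictly along the branch at a rate bounded below by $a(\bar\xi_-)$. Hence $\xi(t)\le\xi(\theta_1)-c t_1\le\operatorname{arccosh}(\bar f+\delta)-ct_1$, and for $\delta$ small compared with $c\,t_1$ this is $<\bar\xi$. Since $E_{<\bar f}\subset\Omega$ (as $\bar f$ is the minimum of $f$ on $\partial\Omega$ and $[c_1,c_2]\subset\Omega$), the point lies in the interior of $\Omega$, at a distance from $\partial\Omega$ bounded below uniformly once $\delta_0$ is fixed.

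One subtlety is that $t_1$ and $c$ must not depend on $\delta$. I would secure this by fixing $J$ and $\alpha_1$ first from Lemma~\ref{lem:cone_free}, then $t_1$ from Lemma~\ref{lem:no_return} with $\alpha_0$ slightly larger than $\alpha_1$, and only then choose $\delta_0$.

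\emph{Perturbation to $\sk$.} The $C^1$ closeness on $[0,T]$ from Proposition~\ref{prop:approximation_stable_manifold} transfers these conclusions to $\sk$:
\begin{itemize}
\item the initial velocity stays in the cone of angle $<\alpha_0$, so Lemma~\ref{lem:no_return} applies on $(0,t_1]$;
\item on $[t_1,T]$ the arc stays within $\ve$ of a set at positive distance from $\partial\Omega$.
\end{itemize}
The same argument applies to the final piece via time reversal and the branch $s_{-\infty}^{\theta_2}$. Transversality at the endpoints follows from the cone condition, since the angle is at most $\alpha_0<\pi/2$. Thus the arc meets $\partial\Omega$ only at its endpoints, and it lies in $\overline\Omega$.
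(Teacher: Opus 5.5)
Your proof is correct. It uses the same ingredients as the paper's:
\begin{itemize}
\item the cone condition near the minimum set (Lemma~\ref{lem:cone_free});
\item Lemma~\ref{lem:no_return}, with $t_1$ fixed before $\delta$;
\item the inclusion $E_{<\bar f}\subset\Omega$;
\item unimodality of $\xi$ along $\sk$ once $K_0<\bar K_0$.
\end{itemize}
Where you differ is in how you obtain the descent estimate.

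\textbf{The paper's route.} The paper never compares $\sk$ with the limiting branch beyond time $0$. Lemmas~\ref{lem:rho_k} and~\ref{lem:transit_time} force the turning point of $\sk$ itself below $\bar\xi/2$. Then along $\sk$, while $\xi\ge\bar\xi$, one has $|\dot\xi|^2=2\bigl(W_1(\xi)-W_1(\xi_{\min})\bigr)\ge2\beta$ with $\beta=W_1(\bar\xi)-W_1(\bar\xi/2)$. Hence $\sk$ reaches $\{\xi=\bar\xi\}$ within time $\Delta(\delta)/\sqrt{2\beta}<t_1$, where $\Delta(\delta)=\operatorname{arccosh}(\bar f+\delta)-\bar\xi$. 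Lemma~\ref{lem:no_return} protects it up to that time, and afterwards it lies in $E_{<\bar f}$ until the symmetric crossing. The only input taken from Proposition~\ref{prop:approximation_stable_manifold} is convergence of the initial direction, and by time reversal of the final one.

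\textbf{Your route.} You obtain the speed bound on the limiting branch $s^{\theta_1}_\infty$, where $|\dot\xi|=a(\xi)$. You show that after time $t_1$ this branch sits a definite, $\delta$-independent depth below $\bar\xi$. You then transfer this to $\sk$ through the $C^0$ part of the finite-time shadowing estimate on $[0,T]$.

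\textbf{Trade-off.} Your version needs the full $C^1[0,T]$ closeness rather than just convergence of the endpoint velocities. In exchange, it spares you any control of $\xi_{\min}(\sk)$, so it reads more like a genuine shadowing argument. The paper's version is more economical and is quantitative directly on $\sk$.

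\textbf{A simplification.} The window $[t_1,T]$ is more than you need. Since $t_k^*\to\infty$, $\xi$ is still decreasing along $\sk$ at time $t_1$. It therefore suffices to know that $\xi(\sk(t_1))<\bar\xi$, together with the analogous fact for the reversed arc; unimodality then keeps the whole middle portion in $E_{<\bar f}$. In other words, you could take $T=t_1$.
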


\begin{proof}
  Let $A=\{f=\bar f\}$ and $\bar{\xi}=\operatorname{arccosh}(\bar{f})$.  By Lemma~\ref{lem:cone_free}, there exist a closed
  neighbourhood $J$ of $A$ and $\alpha_1<\pi/2$ such that
  $|\alpha_h^\pm|\le\alpha_1$ on $J$.  Fix
  \[
    \alpha_0:=\frac{\alpha_1+\pi/2}{2}<\frac{\pi}{2},
  \]
  and let $t_1>0$ be given by Lemma~\ref{lem:no_return}.  We also set
  \begin{equation}
  	\label{eq:def_beta}
    \beta:=W_1(\bar\xi)-W_1(\bar\xi/2)>0.
  \end{equation}
  Since $A$ is compact, there exists $\delta_J>0$ such that
  \[
    \{f\le \bar f+\delta_J\}\subset J.
  \]
  
  Therefore, there exists an interval $I\subset J$ with the properties prescribed by Lemma \ref{lem:choice_of_I}.
  By Lemma~\ref{lem:rho_k}, the estimates of
  Lemma~\ref{lem:transit_time}, and \eqref{eq:rho_k_exp}, after increasing
  $k_1$ if necessary we may assume, uniformly in
  $\theta_1,\theta_2\in I$, that $K_0(\sk)<\bar K_0$ and that the unique
  turning point of $\sk$ satisfies
  \[
    \xi_{\min}\le\frac{\bar\xi}{2}.
  \]
  Since $W_1$ is increasing on $(0,+\infty)$, along either branch of the
  orbit while $\xi\ge\bar\xi$ one has
  \[
    |\dot\xi|^2
      =2\bigl(W_1(\xi)-W_1(\xi_{\min})\bigr)
      \ge 2\beta.
  \]
  Thus, the orbit crosses the region between its endpoint and the level
  $\{\xi=\bar\xi\}$ with a uniform speed bounded away from zero.

  If $\theta\in I$, we have
  $f(\theta)\le\bar f+\delta$.  Since $f=\cosh\xi$, this implies
  \[
    \xi(\theta)\le \bar\xi+\Delta(\delta),
    \qquad
    \Delta(\delta):=
      \operatorname{arcosh}(\bar f+\delta)-\bar\xi
      \longrightarrow0
  \]
  as $\delta\to0$.  Hence, the time needed to reach $\{\xi=\bar\xi\}$ is at
  most
  \[
    \frac{\Delta(\delta)}{\sqrt{2\beta}}.
  \]
  We may therefore choose $\delta_0<\delta_J$ so that this quantity is
  smaller than $t_1$ whenever $\delta<\delta_0$. 
  
  Proposition~\ref{prop:approximation_stable_manifold} shows that, uniformly
  in the endpoints, the initial velocity of $\sk$ converges as $k\to\infty$
  to the corresponding stable direction.  Enlarging $k_1$ once more, its
  angle with the inward normal is therefore at most $\alpha_0$.
  Lemma~\ref{lem:no_return} then implies that the initial part of the arc
  remains in $\Omega$ until it reaches $\{\xi=\bar\xi\}$.  From that moment
  until the second crossing of this level the arc lies in
  $ E_{<\bar f}\subset\Omega$.
  
  Applying the same argument to the time-reversed arc controls the final
  portion up to $\gamma(\theta_2)$.  Hence $\sk$ is contained in
  $\overline\Omega$ and meets $\partial\Omega$ only at its endpoints.
  The same uniform angle bound $\alpha_0<\pi/2$ also yields transversality
  at both impact points.
\end{proof}

\subsection{Generating functions}

We now recast the approximation result in terms of the first variation of the
Jacobi--Maupertuis length with respect to the endpoints.

\begin{definition}
  \label{def:generating}
  Fix $\theta_0\in\mathbb S^1$ and $k\in\mathbb Z$.  The \emph{$k$-th
  generating function} is
  \begin{equation}
    \label{eq:def_generating}
    S^k_{\theta_0}(\theta_1,\theta_2)
      :=\mathcal L_h\bigl(\sk\bigr),
    \qquad
    (\theta_1,\theta_2)\in
    Q_{\theta_0}:=
    \bigl(\mathbb S^1\setminus\{\theta_0\}\bigr)^2 .
  \end{equation}
\end{definition}

Thus $S^k_{\theta_0}(\theta_1,\theta_2)$ is the Jacobi--Maupertuis length of
the unique minimising arc joining $\gamma(\theta_1)$ to
$\gamma(\theta_2)$ in the homotopy class prescribed by the winding number
$k$.

By Proposition~\ref{prop:uniqueness}, the minimiser defining
$S^k_{\theta_0}$ is unique.  For $k\ne0$, the corresponding lifted
endpoints are distinct, so the distance is smooth in the endpoints;
hence $S^k_{\theta_0}$ is at least $\mathcal C^1$, with the same regularity
as $\gamma$.  The case $k=0$ is similar: one has only to avoid the diagonal. Howevver, this will not be
used below.

For every non-constant minimiser, the first variation formula yields
\begin{equation}
  \label{eq:first_variation}
  \partial_1S^k_{\theta_0}
   =-\bigl\langle \hat v_1,\dot\gamma(\theta_1)\bigr\rangle_{g_h},
  \qquad
  \partial_2S^k_{\theta_0}
   =\bigl\langle \hat v_2,\dot\gamma(\theta_2)\bigr\rangle_{g_h},
\end{equation}
where $\hat v_1$ and $\hat v_2$ denote the $g_h$-unit tangent vectors to
$\sk$ at its initial and final endpoints, respectively.

\begin{prop}
  \label{prop:generating_convergence}
  Let $h\ge0$ and $\theta_0\in\mathbb{S}^1$ be fixed. Then, as
  $\vert k\vert\to\infty$,
  \[
    \sup_{(\theta_1,\theta_2)\in Q_{\theta_0}}
    \Bigl\vert\,\partial_1S^k_{\theta_0}(\theta_1,\theta_2)
      +\bigl\langle \hat X^h_{\epsilon},\dot\gamma(\theta_1)\bigr\rangle_{g_h}\Bigr\vert
    +\Bigl\vert\,\partial_2S^k_{\theta_0}(\theta_1,\theta_2)
      +\bigl\langle \hat X^h_{-\epsilon},\dot\gamma(\theta_2)\bigr\rangle_{g_h}\Bigr\vert
    \ \longrightarrow\ 0 ,
  \]
  where $\epsilon=\mathrm{sgn}\,k$ and $\hat X^h_\pm$ denotes the
  $g_h$-normalisation of the field \eqref{eq:def_Xpm}. The convergence is
  exponential in $\vert k\vert$, with the rate of Lemma~\ref{lem:rho_k}.
\end{prop}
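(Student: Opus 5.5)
The plan is to reduce everything to the first variation formula \eqref{eq:first_variation}: since $\partial_1 S^k_{\theta_0}=-\langle\hat v_1,\dot\gamma(\theta_1)\rangle_{g_h}$ and $\partial_2 S^k_{\theta_0}=\langle\hat v_2,\dot\gamma(\theta_2)\rangle_{g_h}$, it suffices to prove the following. First, $\hat v_1$ converges to $\hat X^h_\epsilon(\tilde\gamma(\theta_1))$. Second, $\hat v_2$ converges to $-\hat X^h_{-\epsilon}(\tilde\gamma(\theta_2))$. Both convergences are to hold in the $g_h$-unit tangent bundle over $\partial\Omega$, at an exponential rate and uniformly in $(\theta_1,\theta_2)\in Q_{\theta_0}$. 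Since $\dot\gamma$ is bounded and $g_h$ is a smooth metric on a neighbourhood of the compact set $\partial\Omega$ (which avoids the centres), Cauchy--Schwarz then transfers the estimate to the derivatives of $S^k_{\theta_0}$.

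For the initial endpoint I will reuse the first step of the proof of Proposition~\ref{prop:approximation_stable_manifold}, rather than its $C^1[0,T]$ statement alone. There one writes the initial regularised momentum of $\sk$ as an explicit function of the separation constant $K_0$:
\[
p_\xi(0)=-\sqrt{2\bigl(K_0+W_1(\xi(\theta_1))\bigr)},\qquad p_\eta(0)=\epsilon\sqrt{2\bigl(-K_0-W_2(\eta(\theta_1))\bigr)}.
\]
At $K_0=\bar K_0$ this momentum equals $X^h_\epsilon(\tilde\gamma(\theta_1))$, and both radicands are bounded away from zero on the boundary by \eqref{eq:factorisation} and \eqref{eq:pos_peta}. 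Hence the momentum is Lipschitz in $K_0$, uniformly in $\theta_1$, and its distance from $X^h_\epsilon$ is at most $C\rho(\sk)\le Ce^{-c|k|}$ by Lemma~\ref{lem:rho_k}. The fact that $\sk$ starts with $\xi$ decreasing and with $\operatorname{sgn}p_\eta=\epsilon$ for $|k|$ large comes from Lemma~\ref{lem:rho_k}: there is a unique turning point, and $\eta$ must increase by about $2\pi|k|$ in the direction fixed by the sign of $k$.

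Next I will pass from regularised momenta to $g_h$-unit vectors. Because $\Phi$ is conformal, $\Phi^*g_h$ is a scalar multiple of $d\xi^2+d\eta^2$, and normalising with respect to $g_h$ is the same as Euclidean normalisation in the $(\xi,\eta)$-plane followed by $d\Phi$ and a fixed conformal rescaling. By \eqref{eq:speed_on_shell} the Euclidean norm of $(p_\xi,p_\eta)$ is at least $2\sqrt{m_2}$, so $v\mapsto v/|v|$ is uniformly Lipschitz on the relevant set. This gives $|\hat v_1-\hat X^h_\epsilon|\le Ce^{-c|k|}$.

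For the final endpoint I will apply the same argument to the reversed arc. Reversing $\sk$ gives an arc from $\gamma(\theta_2)$ to $\gamma(\theta_1)$ that winds in the opposite direction; by uniqueness (Proposition~\ref{prop:uniqueness}) it is $\skinv$, up to the bookkeeping of the closing boundary arc, which the uniform estimates ignore. Its initial velocity $-\hat v_2$ is therefore exponentially close to $\hat X^h_{-\epsilon}(\tilde\gamma(\theta_2))$, which yields the second term. Note that $\rho$ is unchanged by reversal, since $K_0$ is invariant under $p\mapsto -p$.

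I expect the main obstacle to be uniformity over the non-compact set $Q_{\theta_0}$, where the endpoints may approach $\theta_0$ from either side and the homotopy bookkeeping jumps. I will handle this by noting that every constant above depends only on $\xi_b$, $\xi_B$, $h$, $m_1$ and $m_2$, and on the bound of Lemma~\ref{lem:rho_k}, which is already uniform in the endpoints. The choice of $\theta_0$ only shifts the total $\eta$-variation by an amount in $[-\pi,\pi]$, and this is absorbed into the constants.
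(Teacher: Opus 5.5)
Your proposal is correct and follows the paper's proof. Both arguments combine the first variation formula \eqref{eq:first_variation} with the convergence of the initial velocity of $\sk$ to $X^h_\epsilon$ and of the reversed arc $\skinv$ to $X^h_{-\epsilon}$, then normalise. The paper simply cites Proposition~\ref{prop:approximation_stable_manifold} for these velocity limits, whereas you take the $t=0$ momentum estimate directly from its proof, which makes the exponential rate $C\rho(\sk)\le Ce^{-c|k|}$ explicit; note also that the reversed arc is exactly $\skinv$, so no bookkeeping correction is needed.
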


\begin{proof}
  By Proposition~\ref{prop:approximation_stable_manifold} the initial velocity of
  $\sk$ converges to that of $s^{\theta_1}_\infty$, that is to
  $X^h_{\epsilon}$, and the initial velocity of $\skinv$ converges to that of
  $s^{\theta_2}_{-\infty}$, that is to $X^h_{-\epsilon}$; hence the \emph{final}
  velocity of $\sk$ converges to $-X^h_{-\epsilon}$. Normalising and inserting
  into \eqref{eq:first_variation} gives the two limits.
\end{proof}

This formulation is tailored to the variational argument below.  If we try to concatenate multiple $\sk$ arcs to find billiard trajectories, the derivative of the
action with respect to an impact point $\theta$ is the sum of the contributions from the
incoming and outgoing arcs.  Proposition~\ref{prop:generating_convergence}
shows that, in the large-winding limit, this derivative converges to
\[
  -\bigl\langle \hat X^h_+ + \hat X^h_-,
    \dot\gamma(\theta)\bigr\rangle_{g_h}.
\]
Since $X^h_+$ and $X^h_-$ have the same pointwise norm, their $g_h$ normalisations involve the same positive factor.  Hence the limiting
stationarity condition is equivalent to
\[
  \bigl\langle \hat X^h_+ + \hat X^h_-,
    \dot{\gamma}(\theta)\bigr\rangle=0,
\]
which, by Lemma~\ref{lemma:critical_point}, is precisely the condition
$f'(\theta)=0$.

\section{Symbolic dynamics and non-integrability}
\label{sec:symbolic}

In this section we complete the variational construction of billiard orbits shadowing $W^s(\Gamma_h)$ and $W^s(\Gamma_h^{-1})$ and deduce the main dynamical consequences that this construction entails.

Proposition~\ref{prop:generating_convergence} shows that the
critical-point equations for chains of $\sk$ having $\vert k\vert$ large are a small
perturbation of the equation $f'=0$.  On the interval
$I=[\theta_-,\theta_+]$ chosen in Lemma~\ref{lem:choice_of_I}, the resulting
sign condition allows us to apply the Poincar\'e--Miranda theorem
\cite{Miranda1940,Kulpa1997}.  Its finite-dimensional form yields periodic
chains, while the countable-product version proved below gives bi-infinite
itineraries and hence the symbolic dynamics.  A one-sided variant of the same
construction will then be used to rule out non-constant real-analytic first
integrals.  Finite subalphabets provide the compact invariant sets needed for
the entropy estimates.

The standing assumptions are those of Section~\ref{sec:approximation}: $\Omega$
is a bounded domain with $\mathcal{C}^1$ boundary, containing $[c_1,c_2]$, whose
boundary is not a confocal ellipse; and $h\ge0$. 

\subsection{Standing choices}
\label{subsec:standing_choices}

Several constants enter the construction, and the order in which they are fixed
matters; we record it here once and for all in five items. Each item depends only on the previous ones.

\begin{enumerate}
  \item[(C1)] The arc $J$, the angle $\alpha_0<\pi/2$, the time $t_1$, the
  constant $\beta$ of \eqref{eq:def_beta} and the threshold $\delta_0$ are given
  by Proposition~\ref{prop:confinement}. They depend only on $h$, $m_1$, $m_2$
  and $\Omega$.
  \item[(C2)] A parameter $\delta<\delta_0$ is chosen, and with it the interval
  $I=[\theta_-,\theta_+]$ of Lemma~\ref{lem:choice_of_I}, satisfying
  \eqref{eq:choice_of_I}. A base point $\theta_0\notin I$ is fixed.
  \end{enumerate}
  Note that  $t_1$ and $\beta$ of (C1) are independent of $\delta$,
  which is what makes Proposition~\ref{prop:confinement} available for every
  sufficiently small $\delta$.
  
  We introduce now another parameter, essential for the application of the Poincar\'e--Miranda theorem. Recall that, by Lemma~\ref{lemma:critical_point}(ii) and
  \eqref{eq:sum_Xpm} the function
  \begin{equation}
  	\label{eq:def_Upsilon}
  	\Upsilon(\theta):=-\bigl\langle \hat X^h_++\hat X^h_-,\
  	\dot\gamma(\theta)\bigr\rangle_{g_h}
  \end{equation}
  is a positive multiple of $f'(\theta)$ at every $\theta$ (compare with \eqref{eq:function_ellipse} and \eqref{eq:sum_Xpm}); in particular
  $\Upsilon$ and $f'$ vanish simultaneously and have the same sign. Since
  $f'(\theta_-)<0<f'(\theta_+)$ by \eqref{eq:choice_of_I}, we may set
  \begin{equation}
  	\label{eq:eps}
  	2\ve:=\min\bigl\{\vert\Upsilon(\theta_-)\vert,\
  	\vert\Upsilon(\theta_+)\vert\bigr\}>0 ,
  \end{equation}
  and then choose, by Proposition~\ref{prop:generating_convergence}, an integer
  $k_0$ such that
  \begin{equation}
  	\label{eq:approximation_gradients}
  	\sup_{(\theta_1,\theta_2)\in I\times I}
  	\Bigl\vert\,\partial_1S^{k}_{\theta_0}(\theta_1,\theta_2)
  	+\bigl\langle\hat X^h_+,\dot\gamma(\theta_1)\bigr\rangle_{g_h}\Bigr\vert
  	+\Bigl\vert\,\partial_2S^{k}_{\theta_0}(\theta_1,\theta_2)
  	+\bigl\langle\hat X^h_-,\dot\gamma(\theta_2)\bigr\rangle_{g_h}\Bigr\vert
  	\ \le\ \frac{\ve}{2}
  \end{equation}
  for every $k\ge k_0$.
  
  We can now introduce the last two standing choices for the rest of the paper. 
  \begin{enumerate}
  \item[(C3)] The tolerance $\ve>0$ depends on the choice of $I$ and is given by \eqref{eq:eps}.
  \item[(C4)] The threshold $k_0$ is chosen so that
  \eqref{eq:approximation_gradients} holds and $k_0\ge k_1$, with $k_1$ as in
  Proposition~\ref{prop:confinement}. 
  \item[(C5)] The alphabet is defined as
  \begin{equation}
    \label{eq:def_alphabet}
    \mathcal{S}:=\{k\in\mathbb{N}:\ k\ge k_0\}.
  \end{equation}
\end{enumerate}

Note also that
$k_0$ depends on $I$, hence on $\Omega$ and $h$: this is the origin of the
$k_0=k_0(\Omega,h)$ appearing in Theorem~\ref{thm:A}.
Note, moreover, that by Proposition~\ref{prop:confinement} and (C4), for all $\theta_1,\theta_2\in I$
and all $k\in\mathcal{S}$ the arc $\sk$ is a genuine billiard arc: it lies in
$\overline\Omega$ and meets $\partial\Omega$ transversally at its two endpoints
only. This is used silently from now on.

\begin{remark}
  \label{rem:one_sided_alphabet}
  The alphabet \eqref{eq:def_alphabet} consists of windings of one sign only, and
  this is not a convention but a necessity. At an impact point $\theta_i$ the two
  limiting fields are $\hat X^h_{-\epsilon_{i-1}}$, coming from the incoming arc,
  and $\hat X^h_{\epsilon_i}$, coming from the outgoing one, where
  $\epsilon_j=\mathrm{sgn}\,\sigma_j$. They combine into
  $\hat X^h_++\hat X^h_-$, and hence into $\nabla f$ by
  \eqref{eq:def_Upsilon}, \emph{only if} $\epsilon_{i-1}=\epsilon_i$. For a
  sequence with mixed signs one gets $2\hat X^h_+$ or $2\hat X^h_-$ instead,
  which is not related to $f$. Replacing $\mathcal{S}$ by $\{k\le-k_0\}$ gives
  the mirror statement.
\end{remark}

\subsection{Chains and the Poincar\'e--Miranda Theorem}

Given a sequence $\sigma$ indexed by a set $J'\subseteq\mathbb{Z}$ of
consecutive integers, with $\sigma_i\in\mathcal{S}$, and a configuration
$\theta=(\theta_i)$ of points of $I$, we consider the chain whose $i$-th arc is
$s^{\theta_i,\theta_{i+1}}_{\sigma_i,\theta_0}$.  We define the  action as the formal sum:

\begin{equation}
  \label{eq:def_action}
  \mathcal{A}_\sigma(\theta)=\sum_{i}S^{\sigma_i}_{\theta_0}(\theta_i,\theta_{i+1}).
\end{equation}
By \eqref{eq:first_variation}, the stationarity equation at an interior node is
\begin{equation}
  \label{eq:def_G}
  G_i(\theta):=\partial_{\theta_i}\mathcal{A}_\sigma
   =\partial_2S^{\sigma_{i-1}}_{\theta_0}(\theta_{i-1},\theta_i)
    +\partial_1S^{\sigma_i}_{\theta_0}(\theta_i,\theta_{i+1})
   =\bigl\langle \hat v^{\,\mathrm{in}}_i-\hat v^{\,\mathrm{out}}_i,
                 \dot\gamma(\theta_i)\bigr\rangle_{g_h},
\end{equation}
where $\hat v^{\,\mathrm{in}}_i$ and $\hat v^{\,\mathrm{out}}_i$ are the
$g_h$-unit arrival and departure velocities at $\gamma(\theta_i)$.
At a fixed boundary point the metric $g_h$ is conformal to the Euclidean
metric.  Since the incoming and outgoing vectors are unit vectors and lie on
opposite sides of the tangent line, equality of their tangential components is
equivalent to specular reflection.  Hence $G_i(\theta)=0$ if and only if the
elastic reflection law holds at $\gamma(\theta_i)$. Thus:
\begin{center}
  \emph{a configuration $\theta\in I^{J'}$ with $G_i(\theta)=0$ for every $i$ is
  precisely a billiard trajectory realising the itinerary $\sigma$,}
\end{center}
the arcs being genuine billiard arcs by Proposition~\ref{prop:confinement}.

Two features of $G_i$ make the infinite-dimensional argument work. First, $G_i$
depends only on the three variables $\theta_{i-1},\theta_i,\theta_{i+1}$, hence
it is continuous on $I^{J'}$ for the \emph{product} topology.  Second, its sign on
the two faces $\{\theta_i=\theta_\mp\}$ is prescribed, uniformly in all the other
variables. 
We prove now the second property and then discuss Poincar\'e--Miranda theorem and the continuity of the map $G$ in the product topology. 

\begin{lemma}
  \label{lem:sign_on_faces}
  For every $\sigma$ with values in $\mathcal{S}$, every $i$ and every
  $\theta\in I^{J'}$,
  \[
    \theta_i=\theta_-\ \Longrightarrow\ G_i(\theta)\le-\ve,
    \qquad
    \theta_i=\theta_+\ \Longrightarrow\ G_i(\theta)\ge \ve .
  \]
\end{lemma}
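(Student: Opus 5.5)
The plan is a triangle inequality built on the uniform gradient approximation \eqref{eq:approximation_gradients}. Fix $\sigma$ with values in $\mathcal S$, an index $i$ and $\theta\in I^{J'}$. By \eqref{eq:def_G},
\[
G_i(\theta)=\partial_2S^{\sigma_{i-1}}_{\theta_0}(\theta_{i-1},\theta_i)+\partial_1S^{\sigma_i}_{\theta_0}(\theta_i,\theta_{i+1}).
\]
Both $\sigma_{i-1}$ and $\sigma_i$ are $\ge k_0$, and all three points lie in $I$, so (C4) applies to each summand with $\epsilon=+1$. For the second summand we use the $\partial_1$ part of \eqref{eq:approximation_gradients}, with the pair $(\theta_i,\theta_{i+1})$. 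For the first we use the $\partial_2$ part, with the pair $(\theta_{i-1},\theta_i)$. Together they give
\[
\Bigl|G_i(\theta)+\bigl\langle\hat X^h_-,\dot\gamma(\theta_i)\bigr\rangle_{g_h}+\bigl\langle\hat X^h_+,\dot\gamma(\theta_i)\bigr\rangle_{g_h}\Bigr|\le\frac{\ve}{2}+\frac{\ve}{2}=\ve,
\]
that is, $|G_i(\theta)-\Upsilon(\theta_i)|\le\ve$, with $\Upsilon$ as in \eqref{eq:def_Upsilon}. The key point is that this bound is uniform in $\theta_{i\pm1}$ and in the symbols $\sigma_{i-1},\sigma_i$. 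The one-signed alphabet (Remark~\ref{rem:one_sided_alphabet}) is exactly what makes the two limiting fields combine into $\hat X^h_++\hat X^h_-$.

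Next, evaluate on the faces. If $\theta_i=\theta_-$, then $\Upsilon(\theta_-)$ has the sign of $f'(\theta_-)<0$, and by \eqref{eq:eps} $|\Upsilon(\theta_-)|\ge2\ve$. Hence $\Upsilon(\theta_-)\le-2\ve$ and $G_i(\theta)\le-2\ve+\ve=-\ve$. Symmetrically, at $\theta_i=\theta_+$ we get $\Upsilon(\theta_+)\ge2\ve$ and $G_i\ge\ve$.

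The only delicate points are bookkeeping ones. First, one must check that \eqref{eq:approximation_gradients} holds over the whole range of symbols. This is fine because it was required for every $k\ge k_0$, and that requirement is justified by the uniform exponential convergence in Proposition~\ref{prop:generating_convergence}. Second, one must check that the incoming arc's final velocity indeed approximates $-X^h_-$, so that the $\partial_2$ term contributes $-\langle\hat X^h_-,\dot\gamma\rangle$. This is the content of Proposition~\ref{prop:generating_convergence} with $\epsilon=+1$.

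Finally, if $J'$ is bounded and $i$ is an endpoint index, $G_i$ is only defined at interior nodes. We would either restrict the statement to interior $i$, or, for periodic chains, read indices cyclically so that every node is interior.
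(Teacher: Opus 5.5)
Your proof is correct and is essentially the paper's argument. Apply \eqref{eq:approximation_gradients} to both summands of \eqref{eq:def_G}, using the one-signed alphabet, to get $|G_i(\theta)-\Upsilon(\theta_i)|\le\ve$ uniformly in the other coordinates; then conclude from $\Upsilon(\theta_-)\le-2\ve$ and $\Upsilon(\theta_+)\ge2\ve$, which follow from \eqref{eq:eps}. Your remark that $G_i$ is only defined at interior nodes when $J'$ is finite is a sensible bookkeeping point that the paper leaves implicit: in all its applications (bi-infinite, cyclic, or one-sided with $\theta_1$ fixed) every free node is interior.
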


\begin{proof}
  By Remark~\ref{rem:one_sided_alphabet} both $\sigma_{i-1}$ and $\sigma_i$ lie
  in $\mathcal{S}$, so \eqref{eq:approximation_gradients} applies to both
  summands of \eqref{eq:def_G} and gives
  $\vert G_i(\theta)-\Upsilon(\theta_i)\vert\le\ve$. The claim follows from
  \eqref{eq:eps}, since $\Upsilon(\theta_-)\le-2\ve$ and
  $\Upsilon(\theta_+)\ge2\ve$. The bound is uniform in $\theta_{i-1}$,
  $\theta_{i+1}$ and in the rest of the configuration, because
  \eqref{eq:approximation_gradients} is.
\end{proof}

Recall the classical statement of
\cite{Miranda1940}: if $\mathcal{G}=(\mathcal{G}_1,\dots,\mathcal{G}_n)$ is continuous on a box
$\prod_{i=1}^n[a_i,b_i]$, with $\mathcal{G}_i\le0$ on the face $\{x_i=a_i\}$ and
$\mathcal{G}_i\ge0$ on $\{x_i=b_i\}$, then $\mathcal{G}$ vanishes somewhere. The following version,
which is all we need, requires no functional-analytic machinery: it follows from
the finite-dimensional case and Tychonoff's Theorem.

\begin{lemma}[Poincar\'e--Miranda Theorem on a countable product]
  \label{lem:miranda_infinite}
  Let $J'$ be a countable set, let $[a,b]\subset\mathbb{R}$ and let
  $Q=[a,b]^{J'}$ carry the product topology. Let $\mathcal{G}_j\colon Q\to\mathbb{R}$,
  $j\in J'$, be continuous, and assume there is $\ve>0$ such that, for every
  $j\in J'$ and every $\theta\in Q$,
  \[
    \theta_j=a\ \Longrightarrow\ \mathcal{G}_j(\theta)\le-\ve,
    \qquad
    \theta_j=b\ \Longrightarrow\ \mathcal{G}_j(\theta)\ge\ve .
  \]
  Then there exists $\theta\in Q$ with $\mathcal{G}_j(\theta)=0$ for every $j\in J'$.
\end{lemma}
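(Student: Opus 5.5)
The plan is to exhaust $J'$ by finite sets, solve each finite problem with the classical theorem, and pass to the limit using compactness. Enumerate the countable set as $J'=\{j_1,j_2,\dots\}$ (if $J'$ is finite there is nothing beyond the classical case). For each $n$ set $F_n=\{j_1,\dots,j_n\}$.

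\emph{Step 1: the finite problem.} Fix once and for all a reference point $c\in[a,b]$, say $c=a$. For $x\in[a,b]^{F_n}$ define $\iota_n(x)\in Q$ by $\iota_n(x)_j=x_j$ for $j\in F_n$ and $\iota_n(x)_j=c$ otherwise. The map $\iota_n$ is continuous into the product topology, so the maps $\mathcal{G}_j\circ\iota_n$, $j\in F_n$, are continuous on the box $[a,b]^{F_n}$. The face conditions hold there because the hypothesis is uniform in all the other coordinates. The classical Poincar\'e--Miranda theorem \cite{Miranda1940} then gives $x^{(n)}$ with $\mathcal{G}_j(\iota_n(x^{(n)}))=0$ for every $j\in F_n$. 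Set $\theta^{(n)}:=\iota_n(x^{(n)})\in Q$.

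\emph{Step 2: compactness.} By Tychonoff's theorem $Q$ is compact. Since $J'$ is countable and $[a,b]$ is metrisable, $Q$ is metrisable, hence sequentially compact; equivalently, one can run a diagonal extraction coordinate by coordinate. This yields a subsequence $\theta^{(n_m)}\to\theta^*$ in the product topology, that is, coordinatewise.

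\emph{Step 3: passing to the limit.} Fix $j\in J'$. Then $j\in F_{n}$ for all large $n$, so $\mathcal{G}_j(\theta^{(n_m)})=0$ for all large $m$. Continuity of $\mathcal{G}_j$ on $Q$ with the product topology gives $\mathcal{G}_j(\theta^*)=0$. Since $j$ was arbitrary, $\theta^*$ is the required common zero.

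The only delicate point is making sure continuity is understood for the product topology: without it, Step 3 fails. This is exactly why the paper stresses that each $G_i$ depends on only three coordinates. Notice that the margin $\varepsilon>0$ is not actually needed for this argument; strict or even non-strict face signs suffice. The uniformity in the remaining coordinates, however, is essential in Step 1.
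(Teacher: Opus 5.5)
Your proof is correct and follows the paper's argument essentially verbatim. Like the paper, you exhaust $J'$ by finite sets $F_n$, freeze the remaining coordinates, apply the classical Poincar\'e--Miranda theorem, extract a convergent subsequence using Tychonoff's theorem and metrisability, and pass to the limit using continuity in the product topology. Your side remark is also right: the classical theorem holds with non-strict face signs, so the uniform margin $\varepsilon$ plays no role in this particular lemma.
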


\begin{proof}
  Since $J'$ is countable we may write $J'=\bigcup_{n\ge1}F_n$ with
  $F_1\subseteq F_2\subseteq\cdots$ finite. Fix $\theta^\ast\in Q$ and, for each
  $n$, consider
  \[
    \mathcal{G}^{(n)}\colon [a,b]^{F_n}\to\mathbb{R}^{F_n},
    \qquad
    \mathcal{G}^{(n)}_j(\vartheta)=\mathcal{G}_j\bigl(\vartheta,\theta^\ast_{J'\setminus F_n}\bigr),
    \quad j\in F_n,
  \]
  obtained by freezing the coordinates outside $F_n$. It is continuous, and by
  hypothesis $\mathcal{G}^{(n)}_j\le-\ve<0$ on $\{\vartheta_j=a\}$ and $\mathcal{G}^{(n)}_j\ge\ve>0$
  on $\{\vartheta_j=b\}$. By the finite-dimensional Poincar\'e--Miranda theorem there is
  $\vartheta^{(n)}$ with $\mathcal{G}^{(n)}(\vartheta^{(n)})=0$. Let $\theta^{(n)}\in Q$
  agree with $\vartheta^{(n)}$ on $F_n$ and with $\theta^\ast$ elsewhere, so that
  \begin{equation}
    \label{eq:approx_zero}
    \mathcal{G}_j\bigl(\theta^{(n)}\bigr)=0\qquad\text{for every }j\in F_n .
  \end{equation}
  By Tychonoff's Theorem $Q$ is compact, and metrisable because $J'$ is
  countable; hence a subsequence $\theta^{(n_m)}$ converges to some $\theta$.
  Fix $j\in J'$; for $m$ large $j\in F_{n_m}$, so \eqref{eq:approx_zero} gives
  $\mathcal{G}_j(\theta^{(n_m)})=0$, and continuity of $\mathcal{G}_j$ for the product topology
  yields $\mathcal{G}_j(\theta)=0$.
\end{proof}

\begin{remark}
  The hypothesis that each $\mathcal{G}_j$ be continuous for the \emph{product} topology is
  essential, and is exactly what fails for a generic map on $[a,b]^{J'}$. In our
  application it holds for the most elementary of reasons: by \eqref{eq:def_G}
  the function $G_i$ depends on three coordinates only. This is also why no
  compactness needs to be established by hand: Tychonoff's Theorem provides it,
  and metrisability turns nets into sequences.
\end{remark}

\subsection{The symbolic dynamics}

\begin{thm}
  \label{thm:shadowing_sequences}
  With the choices (C1)--(C4), for every bi-infinite sequence
  $\sigma\in\mathcal{S}^{\mathbb{Z}}$ there exists a complete billiard
  trajectory at energy $h$, all of whose impact points lie in $I$ and whose
  $i$-th arc is $s^{\theta_i,\theta_{i+1}}_{\sigma_i,\theta_0}$; in particular it
  winds $\sigma_i$ times around $[c_1,c_2]$ between the $i$-th and the
  $(i+1)$-th impact.
\end{thm}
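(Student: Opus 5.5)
We apply Lemma~\ref{lem:miranda_infinite} with $J'=\mathbb{Z}$, $[a,b]=I=[\theta_-,\theta_+]$ and $\mathcal{G}_i=G_i$ as in \eqref{eq:def_G}. The first task is to check that each $G_i$ is a well-defined, continuous real-valued function on $Q=I^{\mathbb{Z}}$ with the product topology. Fix $\sigma\in\mathcal{S}^{\mathbb{Z}}$. Since $\theta_0\notin I$, every pair $(\theta_{i-1},\theta_i)$ and $(\theta_i,\theta_{i+1})$ lies in $I\times I\subset Q_{\theta_0}$. Moreover $\sigma_{i-1},\sigma_i\ge k_0>0$, so the corresponding lifted endpoints are distinct. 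By the discussion following Definition~\ref{def:generating}, $S^{\sigma_{i-1}}_{\theta_0}$ and $S^{\sigma_i}_{\theta_0}$ are therefore $\mathcal C^1$ on $I\times I$. Consequently $G_i$ is a continuous function of the three coordinates $(\theta_{i-1},\theta_i,\theta_{i+1})$ alone. Since coordinate projections are continuous in the product topology, $G_i$ is continuous on $Q$.

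Note that the formal action $\mathcal{A}_\sigma$ is an infinite sum and need not converge. This causes no difficulty, because only the local expressions $G_i$ are used and they never involve the full sum.

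The boundary sign condition is exactly Lemma~\ref{lem:sign_on_faces}. It holds uniformly in $i$ and in all the other coordinates, with the single constant $\ve$ of (C3); this uniformity is what Lemma~\ref{lem:miranda_infinite} requires. That lemma then yields $\theta\in I^{\mathbb{Z}}$ with $G_i(\theta)=0$ for every $i\in\mathbb{Z}$.

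It remains to convert this configuration into a billiard trajectory. By (C4) and Proposition~\ref{prop:confinement}, for every $i$ the arc $s^{\theta_i,\theta_{i+1}}_{\sigma_i,\theta_0}$ is contained in $\overline\Omega$. It meets $\partial\Omega$ only at its two endpoints, and does so transversally. Its interior is therefore a free motion of the two-centre flow inside $\Omega$. Because the arc avoids $[c_1,c_2]$ by Proposition~\ref{prop:uniqueness}, no regularisation is needed along it.

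At each node $\gamma(\theta_i)$ the incoming arc arrives and the outgoing arc departs on the inner side of $\partial\Omega$. The condition $G_i=0$ says that the two $g_h$-unit velocities have equal tangential components. Since $g_h$ is conformal to the Euclidean metric at a point, this is equivalent to specular reflection. Concatenating the arcs, reparametrised to physical time at energy $h$, gives a complete bi-infinite billiard trajectory with every impact point in $I$.

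Finally, the winding statement follows from the homotopy class fixed in Definition~\ref{def:sk}. The $i$-th arc closed up by the boundary arc not containing $\theta_0$ is homotopic to $\gamma^{\sigma_i}$. Since $I$ does not contain $\theta_0$ and all impacts lie in $I$, the closing arc is the short arc inside $I$, so the $i$-th arc winds $\sigma_i$ times around the segment.

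I expect the only delicate point to be the continuity and uniformity bookkeeping: checking that the estimate \eqref{eq:approximation_gradients} and the confinement of Proposition~\ref{prop:confinement} hold for all $\theta_1,\theta_2\in I$ and all $k\ge k_0$ simultaneously. This simultaneity is already built into the choices (C1)--(C4).
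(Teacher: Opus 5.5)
Your proposal is correct and is essentially the paper's proof. You apply Lemma~\ref{lem:miranda_infinite} on $I^{\mathbb{Z}}$ with the $G_i$ of \eqref{eq:def_G}, get product-topology continuity from their dependence on three coordinates and the face conditions from Lemma~\ref{lem:sign_on_faces}, and then invoke Proposition~\ref{prop:confinement} to see that the arcs are genuine billiard arcs. Your extra bookkeeping makes explicit what the paper's short proof leaves implicit: the $\mathcal C^1$ regularity of $S^k_{\theta_0}$ on $I\times I$ from $\theta_0\notin I$ and $k\ge k_0>0$, the equivalence of $G_i=0$ with specular reflection, and the winding count.
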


\begin{proof}
  Apply Lemma~\ref{lem:miranda_infinite} with $J'=\mathbb{Z}$,
  $[a,b]=[\theta_-,\theta_+]$ and $G_i$ as in \eqref{eq:def_G}: the $G_i$ are
  continuous for the product topology because each depends on three coordinates
  only, and the sign conditions hold with the $\ve$ of \eqref{eq:eps} by
  Lemma~\ref{lem:sign_on_faces}. We obtain $\theta\in I^{\mathbb{Z}}$ with
  $G_i(\theta)=0$ for every $i$, which by \eqref{eq:def_G} is the reflection law
  at every impact; and the arcs are billiard arcs by
  Proposition~\ref{prop:confinement}.
\end{proof}

\begin{corollary}[Periodic orbits]
  \label{cor:periodic}
  For every finite sequence $\sigma=(\sigma_1,\dots,\sigma_n)$ with values in
  $\mathcal{S}$ there is an $n$-periodic point of the billiard map (the
  actual minimal period may divide $n$), all of whose impacts lie in $I$, and whose
  consecutive arcs are $s^{\theta_i,\theta_{i+1}}_{\sigma_i,\theta_0}$.
\end{corollary}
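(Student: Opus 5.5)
Given $\sigma=(\sigma_1,\dots,\sigma_n)$, I would extend it $n$-periodically to $\tilde\sigma\in\mathcal S^{\mathbb Z}$, $\tilde\sigma_{i+n}=\tilde\sigma_i$. I would then look for a configuration that is itself $n$-periodic, rather than invoking Theorem~\ref{thm:shadowing_sequences} directly. That theorem gives some solution for $\tilde\sigma$, but nothing forces its impact sequence to be periodic, since uniqueness of the Miranda zero is not available.

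The plan is to impose periodicity at the outset and apply the finite-dimensional Poincar\'e--Miranda theorem on the box $I^n=[\theta_-,\theta_+]^n$. On this box I define the cyclic action
\[
\mathcal A^{\mathrm{per}}_\sigma(\theta_1,\dots,\theta_n)=\sum_{i=1}^{n}S^{\sigma_i}_{\theta_0}(\theta_i,\theta_{i+1}),\qquad \theta_{n+1}:=\theta_1 .
\]
Its partial derivatives are $G_i(\theta)=\partial_2S^{\sigma_{i-1}}_{\theta_0}(\theta_{i-1},\theta_i)+\partial_1S^{\sigma_i}_{\theta_0}(\theta_i,\theta_{i+1})$, with indices taken mod $n$ (so $\theta_0$ in the index sense means $\theta_n$; I would rename to avoid the clash with the base point). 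Each $G_i$ is continuous on $I^n$, because $S^k_{\theta_0}$ is $\mathcal C^1$ on $Q_{\theta_0}\supset I\times I$, the base point $\theta_0\notin I$ having been fixed in (C2). For $n=1$ the single equation reads $G_1(\theta_1)=\partial_2S^{\sigma_1}+\partial_1S^{\sigma_1}$ evaluated at $(\theta_1,\theta_1)$. This still makes sense because $\sigma_1\ne0$, so the minimiser is non-constant and \eqref{eq:first_variation} applies.

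The sign conditions on the faces come verbatim from Lemma~\ref{lem:sign_on_faces}. Its proof uses only \eqref{eq:approximation_gradients} on $I\times I$ for the two adjacent windings, both of which lie in $\mathcal S$, together with \eqref{eq:eps}. It therefore gives $G_i\le-\varepsilon$ on $\{\theta_i=\theta_-\}$ and $G_i\ge\varepsilon$ on $\{\theta_i=\theta_+\}$, uniformly in the remaining coordinates, and this holds in the cyclic setting as well. The finite Miranda theorem then yields $\theta^\ast\in I^n$ with $G_i(\theta^\ast)=0$ for all $i$.

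To conclude, as in the discussion after \eqref{eq:def_G}, each equation $G_i=0$ is the specular reflection law at $\gamma(\theta^\ast_i)$. By Proposition~\ref{prop:confinement} and (C4), every arc $s^{\theta^\ast_i,\theta^\ast_{i+1}}_{\sigma_i,\theta_0}$ is a genuine billiard arc: it lies in $\overline\Omega$ and meets $\partial\Omega$ transversally only at its endpoints. The billiard map therefore sends the datum at $\theta^\ast_i$ (the outgoing angle of the $i$-th arc) to the datum at $\theta^\ast_{i+1}$. After $n$ steps we return to $\theta^\ast_1$ with the outgoing velocity of the first arc, so the point is $n$-periodic.

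The only delicate point is bookkeeping: the cyclic closing arc and the case $n=1$ must be checked to still fall within the scope of \eqref{eq:approximation_gradients} and the $\mathcal C^1$ regularity of $S^k_{\theta_0}$. Both hold because all points lie in $I\subset \mathbb S^1\setminus\{\theta_0\}$ and all windings are at least $k_0>0$. The minimal period may be a proper divisor of $n$, for instance when $\sigma$ is itself periodic, and the statement allows for this.
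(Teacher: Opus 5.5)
Your proposal is correct and is essentially the paper's own proof: index cyclically in $\mathbb{Z}/n\mathbb{Z}$, apply the finite-dimensional Poincar\'e--Miranda theorem to $(G_1,\dots,G_n)$ on $I^n$, and take the face signs from Lemma~\ref{lem:sign_on_faces}, which the cyclic closing does not affect. Your extra bookkeeping is accurate and only makes explicit what the paper leaves implicit: the case $n=1$, the closing arc, and why a zero given by Theorem~\ref{thm:shadowing_sequences} for the periodically extended sequence need not itself be periodic.
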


\begin{proof}
  Read the indices cyclically in $\mathbb{Z}/n\mathbb{Z}$ and apply the
  finite-dimensional Poincar\'e--Miranda theorem to $(G_1,\dots,G_n)$ on $I^n$; the sign
  conditions of Lemma~\ref{lem:sign_on_faces} are unchanged.
\end{proof}

With $M_h$ the phase space of the billiard map of
Definition~\ref{def:theta_alpha}, set
\begin{equation}
  \label{eq:def_Lambda}
  \Lambda_h:=\Bigl\{z\in M_h:\ \text{for every }n\in\mathbb{Z},\
  \mathcal{B}^{n}(z)\text{ has impact point in }I\text{ and winding in }
  \mathcal{S}\Bigr\}.
\end{equation}

\begin{corollary}[Coding and entropy]
  \label{cor:semiconjugacy}
  The winding code defines a continuous surjection
  \[
    \Pi\colon\Lambda_h\longrightarrow\mathcal S^{\mathbb Z}
  \]
  satisfying
  \[
    \Pi\circ\mathcal B=\mathfrak s\circ\Pi .
  \]
 
  For every finite subset $A\subset\mathcal S$ with $|A|\ge2$, however,
  \begin{equation}
    \label{eq:def_Lambda_A}
    \Lambda_h(A)
      :=\{z\in\Lambda_h:\Pi(z)\in A^{\mathbb Z}\}
  \end{equation}
  is a non-empty compact $\mathcal B$-invariant set, and
  \[
    \Pi|_{\Lambda_h(A)}\colon
    \Lambda_h(A)\longrightarrow A^{\mathbb Z}
  \]
  is a factor map onto the full shift.  Consequently,
  \[
    h_{\mathrm{top}}
      \bigl(\mathcal B|_{\Lambda_h(A)}\bigr)
      \ge \log|A|.
  \]
  In particular, $\mathcal B$ admits compact invariant subsystems with
  arbitrarily large topological entropy.
\end{corollary}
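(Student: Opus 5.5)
To prove Corollary~\ref{cor:semiconjugacy} I would first make the winding code precise. For $z\in\Lambda_h$, every iterate $\mathcal B^n(z)$ has impact point $\theta_n\in I$, and the arc from $\gamma(\theta_n)$ to $\gamma(\theta_{n+1})$ is a billiard arc in $\overline\Omega\setminus[c_1,c_2]$. Closing it up along the boundary arc that avoids $\theta_0\notin I$ assigns to it a winding number $\sigma_n$, which lies in $\mathcal S$ by the definition \eqref{eq:def_Lambda}. I set $\Pi(z):=(\sigma_n)_{n\in\mathbb Z}$. The conjugacy $\Pi\circ\mathcal B=\mathfrak s\circ\Pi$ is then a tautology, because shifting the orbit by one step shifts its itinerary.

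Surjectivity comes from Theorem~\ref{thm:shadowing_sequences}. Given $\sigma\in\mathcal S^{\mathbb Z}$, that theorem yields a complete trajectory with impacts in $I$ whose $i$-th arc is $s^{\theta_i,\theta_{i+1}}_{\sigma_i,\theta_0}$. Its initial datum $z$ therefore lies in $\Lambda_h$ and satisfies $\Pi(z)=\sigma$.

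For continuity it suffices to show that each coordinate $z\mapsto\sigma_n(z)$ is locally constant on $\Lambda_h$, since $\mathcal S$ is discrete. The key inputs are the following.
\begin{itemize}
\item On $\Lambda_h$ every arc meets $\partial\Omega$ transversally, and only at its endpoints: it is an energy-$h$ trajectory in the prescribed class, so by the uniqueness in Proposition~\ref{prop:uniqueness} it coincides with $s^{\theta_n,\theta_{n+1}}_{\sigma_n,\theta_0}$, and Proposition~\ref{prop:confinement} applies.
\item Transversality lets the implicit function theorem give continuity of the return time and of $\mathcal B$ near each point of $\Lambda_h$, and hence of each iterate $\mathcal B^n$.
\item A nearby orbit segment is then a small $C^0$ perturbation in $\mathbb R^2\setminus[c_1,c_2]$ with endpoints in $I$, so it is homotopic to the original through the closing boundary arcs, which avoid $\theta_0$. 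Its winding number is therefore unchanged.
\end{itemize}
I expect this continuity step to be the main technical point. What needs care is that the return time stays bounded on small neighbourhoods, which holds because the arcs have bounded length for fixed $\sigma_n$.

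For a finite alphabet $A$, the set $\Lambda_h(A)=\Pi^{-1}(A^{\mathbb Z})$ is $\mathcal B$-invariant because $A^{\mathbb Z}$ is shift invariant. It is non-empty by surjectivity. For compactness I would represent $\Lambda_h(A)$ through its impact configurations $(\theta_i)\in I^{\mathbb Z}$.
\begin{itemize}
\item The datum $z$ is determined continuously by $(\theta_0,\theta_1,\sigma_0)$, since it is the initial velocity of $s^{\theta_0,\theta_1}_{\sigma_0,\theta_0}$.
\item The conditions $G_i=0$ are closed in the product topology, as are the finitely many choices of $\sigma_i\in A$.
\end{itemize}
The set of admissible pairs $(\theta,\sigma)$ is therefore a closed subset of the compact space $I^{\mathbb Z}\times A^{\mathbb Z}$. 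Its image under the continuous map $(\theta,\sigma)\mapsto z(\theta_0,\theta_1,\sigma_0)$ is exactly $\Lambda_h(A)$, using the uniqueness of arcs once more, and so it is compact.

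Finally, $\Pi|_{\Lambda_h(A)}$ is a continuous surjection intertwining $\mathcal B$ with the shift, i.e. a factor map. Topological entropy does not increase under factors, so
\[
h_{\mathrm{top}}\bigl(\mathcal B|_{\Lambda_h(A)}\bigr)\ge h_{\mathrm{top}}\bigl(\mathfrak s|_{A^{\mathbb Z}}\bigr)=\log|A|.
\]
Since $\mathcal S$ is infinite, we can take $|A|$ arbitrarily large, which gives compact invariant subsystems of arbitrarily large entropy.
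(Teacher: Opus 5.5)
Your proof is correct and follows the paper's argument: surjectivity comes from Theorem~\ref{thm:shadowing_sequences}, continuity of $\Pi$ from the local constancy of the winding number of transversal collisionless arcs, and the entropy bound from the factor map onto $A^{\mathbb Z}$. The only variation is in the compactness of $\Lambda_h(A)$. You obtain it as the continuous image of the closed set of admissible configurations in $I^{\mathbb Z}\times A^{\mathbb Z}$, using Proposition~\ref{prop:uniqueness} and the equivalence of $G_i=0$ with the reflection law; the paper instead shows that $\Lambda_h(A)$ is closed in the compact set $\mathcal P_A$ of first-arc data, via a diagonal argument and continuity of $\mathcal B$. These are two packagings of the same Tychonoff-type compactness.
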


\begin{proof}
  Surjectivity follows from Theorem~\ref{thm:shadowing_sequences}, while
  equivariance is immediate from the definition of the coding.  Continuity is
  equally direct: each coordinate of $\Pi$ is the winding number of a
  collisionless arc and is locally constant under small perturbations with
  transversal endpoints, since the corresponding homotopy class in
  $\mathbb R^2\setminus[c_1,c_2]$ is preserved.  Hence $\Pi$ is continuous for
  the product topology.

  Fix a finite $A\subset\mathcal S$ and set
  \[
    \mathcal P_A:=\bigcup_{k\in A}\mathcal P_k,
    \qquad
    \mathcal P_k
      :=\bigl\{
          z_k(\theta_1,\theta_2):
          (\theta_1,\theta_2)\in I^2
        \bigr\},
  \]
  where $z_k(\theta_1,\theta_2)$ denotes the inward state whose first free arc
  is $s^{\theta_1,\theta_2}_{k,\theta_0}$.  The dependence on the endpoints is
  continuous, and Proposition~\ref{prop:confinement} gives transversal return
  uniformly on $I^2$; thus $\mathcal P_A$ is compact and the billiard map is
  continuous on a neighbourhood of it.

  Since every orbit in $\Lambda_h(A)$ remains in $\mathcal P_A$, compactness
  follows by a standard diagonal argument.  Indeed, from any sequence
  $z_n\in\Lambda_h(A)$ one may extract a subsequence such that
  $\mathcal B^j(z_n)$ converges in $\mathcal P_A$ for every $j\in\mathbb Z$.
  Continuity of the return map passes the orbit relations to the limit, yielding
  a complete orbit with all winding symbols in $A$.  Thus $\Lambda_h(A)$ is
  closed in the compact set $\mathcal P_A$, hence compact.

  The restricted coding is onto by
  Theorem~\ref{thm:shadowing_sequences}.  Since topological entropy does not
  increase under factor maps between compact dynamical systems,
  \[
    h_{\mathrm{top}}
      \bigl(\mathcal B|_{\Lambda_h(A)}\bigr)
      \ge
    h_{\mathrm{top}}(\mathfrak s|_{A^{\mathbb Z}})
      =\log|A|,
  \]
  which proves the claim.
\end{proof}

Since the sign conditions of Lemma~\ref{lem:sign_on_faces} are uniform in all
variables, the first impact point may be fixed without affecting the argument.
Thus, for any $\bar\theta\in I$, the remaining equations can be solved with
$\theta_1=\bar\theta$, producing orbits with prescribed itinerary issued from
the fibre
\begin{equation}
  \label{eq:def_fibre}
  F_{\bar\theta}
  :=\bigl\{z\in M_h:\ \pi(z)=\gamma(\bar\theta)\bigr\}.
\end{equation}
Since $\gamma(\bar\theta)$ is away from the centres, $F_{\bar\theta}$ is a half of the
non-degenerate velocity circle on the energy shell, hence a real-analytic
submanifold.

\begin{corollary}[One-sided itineraries with prescribed initial impact]
  \label{cor:one_sided}
   Fix $\bar\theta \in I$. Then:
  \begin{enumerate}
    \item[(i)] for every sequence
    $\sigma=(\sigma_1,\sigma_2,\ldots)\in\mathcal S^{\mathbb N}$, there exists a
    forward-infinite billiard trajectory at energy $h$, starting at
    $\gamma(\bar\theta)$, whose impact points lie in $I$ and whose $i$-th arc is
    \[
      s^{\theta_i,\theta_{i+1}}_{\sigma_i,\theta_0},
      \qquad \theta_1=\bar\theta;
    \]
    \item[(ii)] choosing one such trajectory for each $\sigma$ and denoting its
    initial state by $z(\sigma)\in F_{\bar\theta}$, the map
    \[
      \sigma\longmapsto z(\sigma)
    \]
    is injective. In particular, the set of such initial states is uncountable;
    \item[(iii)] as $\sigma_1\to+\infty$,
    \[
      z(\sigma)\longrightarrow
      \bigl(\tilde \gamma(\bar\theta),
      \,X^h_+(\tilde \gamma(\bar\theta))\bigr),
    \]
    uniformly with respect to $\sigma_2,\sigma_3,\ldots$;
  \end{enumerate}
\end{corollary}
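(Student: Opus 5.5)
For (i), we apply Lemma~\ref{lem:miranda_infinite} with index set $J'=\{2,3,\dots\}$. The variable $\theta_1=\bar\theta$ is held fixed, and the unknowns $(\theta_i)_{i\ge2}\in I^{J'}$ are determined by the equations $G_i=0$ for $i\ge2$, with $G_i$ as in \eqref{eq:def_G}. Each $G_i$ depends on $\theta_{i-1},\theta_i,\theta_{i+1}$ only. For $i=2$ the first argument is the constant $\bar\theta$, so every $G_i$ is continuous in the product topology. Lemma~\ref{lem:sign_on_faces} gives the sign conditions on the faces $\{\theta_i=\theta_\mp\}$ uniformly in all other variables, including the frozen $\theta_1$, because \eqref{eq:approximation_gradients} holds on all of $I\times I$.

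The resulting configuration satisfies the reflection law at every node $i\ge2$. No condition is imposed at $\theta_1$, which is the initial point of the trajectory. Every arc is a genuine billiard arc by Proposition~\ref{prop:confinement}, so we obtain a forward-infinite billiard trajectory starting at $\gamma(\bar\theta)$. Its initial state is $z(\sigma)\in F_{\bar\theta}$, the inward velocity of $s^{\bar\theta,\theta_2}_{\sigma_1,\theta_0}$.

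For (ii), suppose $\sigma\neq\sigma'$ and $z(\sigma)=z(\sigma')$. The billiard map is deterministic: $z(\sigma)$ lies in its domain and its iterates are transversal returns by Proposition~\ref{prop:confinement}. Hence both trajectories coincide, with the same impacts and the same arcs. In particular, the homotopy class of each arc, closed up along the boundary arc avoiding $\theta_0$, is the same, so the winding numbers coincide. This gives $\sigma_i=\sigma'_i$ for all $i$, a contradiction. To make this precise, the $i$-th free arc of the orbit of $z$ must be identified with $s^{\theta_i,\theta_{i+1}}_{\sigma_i,\theta_0}$. Proposition~\ref{prop:confinement} shows that this arc meets $\partial\Omega$ only at its endpoints, so the first return of the flow from $\gamma(\theta_i)$ is exactly $\gamma(\theta_{i+1})$, and the arc is determined by its initial datum. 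Since $\mathcal S^{\mathbb N}$ is uncountable, the set of such initial states is uncountable.

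For (iii), Proposition~\ref{prop:approximation_stable_manifold} (equivalently, the proof of Proposition~\ref{prop:generating_convergence}) shows that the initial velocity of $s^{\theta_1,\theta_2}_{k,\theta_0}$ converges to $X^h_{+}(\tilde\gamma(\theta_1))$ as $k\to+\infty$. The convergence is uniform in $\theta_2$, and exponential with the rate of Lemma~\ref{lem:rho_k}. Since $z(\sigma)$ is the initial datum of $s^{\bar\theta,\theta_2}_{\sigma_1,\theta_0}$, and $\theta_2$ is the only place where $\sigma_2,\sigma_3,\dots$ enter, the uniformity in $\theta_2$ gives the claimed uniform convergence. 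The argument must work with the momentum on $\{K=0\}$, i.e. with the regularised velocity rather than a normalised one. For this we use the bound $|z_{s}(0)-z_{s_\infty}(0)|\le C\rho$ from the proof of Proposition~\ref{prop:approximation_stable_manifold}.

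The only delicate point is the injectivity in (ii): we must make sure that the winding code is read off from the actual billiard orbit, not just from the chosen representatives. This is settled by the first-return identification above, so no serious obstacle is expected.
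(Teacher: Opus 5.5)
Your proposal is correct and follows the paper's proof essentially step by step. For (i) you apply Lemma~\ref{lem:miranda_infinite} to the indices $i\ge2$ with $\theta_1=\bar\theta$ frozen; for (ii) you use determinism of the billiard map and read off the winding code; for (iii) you use the uniform-in-$\theta_2$ convergence of Proposition~\ref{prop:approximation_stable_manifold} applied to the first arc. Your extra step in (ii), identifying each free arc of the orbit with $s^{\theta_i,\theta_{i+1}}_{\sigma_i,\theta_0}$ via the first-return property of Proposition~\ref{prop:confinement}, makes explicit a point the paper leaves implicit.
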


\begin{proof}
  For (i), we apply Lemma~\ref{lem:miranda_infinite} to the variables
  $(\theta_i)_{i\ge2}$, with $\theta_1=\bar\theta$ fixed.  The sign conditions
  of Lemma~\ref{lem:sign_on_faces} are uniform in the remaining variables, so
  the same argument yields a solution of $G_i=0$ for every $i\ge2$.
  These are precisely the reflection conditions at all subsequent impacts.

  For (ii), the initial state uniquely determines the forward billiard
  trajectory and hence its sequence of winding numbers.  Distinct itineraries
  therefore give distinct initial states.  Since $\mathcal S$ contains at
  least two symbols, $\mathcal S^{\mathbb N}$ is uncountable.

  Statement (iii) follows directly from
  Proposition~\ref{prop:approximation_stable_manifold}, applied to the first arc
  $s^{\bar\theta,\theta_2}_{\sigma_1,\theta_0}$; the convergence is uniform in
  $\theta_2\in I$, and hence is independent of the  itinerary.

\end{proof}

\subsection{Analytic non-integrability}
\label{subsec:nonintegrability}

We conclude by showing how the orbits built in Corollary \ref{cor:one_sided}  give an obstruction to the existence of an analytic first integral in the neighborhood of the intersection between stable graphs and their reflection $\mathcal G_h^\pm\cap\iota (\mathcal G_h^{\pm})$.
In particular, no global analytic first
integral exists.

Let us denote by $M_h$ the set of inward-pointing vectors over $\partial \Omega$ and  $\mathcal{D} \subseteq \mathbb{S}^1\times (-\frac{\pi}{2},\frac{\pi}{2})\approx M_h$ the domain of the billiard map $\mathcal{B}$, i.e. the pairs $(\theta,\alpha)$ corresponding to inward pointing vectors over $\partial \Omega$ for which the first return time is finite and the final velocity is transverse to $\partial \Omega$. We give the following definition of local analytic integrability
\begin{definition}
	\label{def:first_integral}
	Let $\mathcal U\subset M_h$ be an open set.  A function
	$\mathcal F\colon\mathcal U\to\mathbb R$ will be called a
	\emph{local first integral of the billiard} if $\mathcal{F}(\mathcal{B}(u))=\mathcal{F}(u)$ for all $u \in \mathcal{D}\cap \mathcal U$ with $\mathcal B(u)\in\mathcal U$,  and $\mathcal{F}$ is real-analytic, as a function of the coordinates $\theta$ and $\alpha$ introduced in Definition \ref{def:theta_alpha}.
\end{definition}

If the set $\partial \Omega$ is analytic, then $M_h$ itself is  analytic and embedded. This implies that $\mathcal{B}$ is analytic as well. Moreover, any analytic function defined on $M_h$ extends to an analytic function in a small neighbourhood. Thus, the notion coincides with the standard analyticity definition in $\mathbb{R}^4$.

\begin{thm}[Local analytic non-integrability]
	\label{thm:local_nonintegrability}
	Let $\mathcal U\subset M_h$ be a connected neighbourhood of
	the graph $
	\mathcal{G}_h^+
	$ (resp. $\mathcal{G}_h^-$) given in Definition \ref{def:alpha_pm}. 
	Under the standing assumptions, every real-analytic local first integral
	$\mathcal F\colon\mathcal U\to\mathbb R$ is constant.
\end{thm}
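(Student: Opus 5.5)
The plan is to use the one-sided orbits of Corollary~\ref{cor:one_sided}, whose initial states in a fibre $F_{\bar\theta}$ accumulate at the point of $\mathcal G_h^+$ over $\bar\theta$. First I would show that $\mathcal F$ takes one common value $c$ on all of them. Then the identity principle, applied first on the one-dimensional fibre and then on $\mathcal U$, propagates this value. I treat $\mathcal G_h^+$ and the alphabet $\mathcal S$ of positive windings; the case of $\mathcal G_h^-$ is the mirror statement of Remark~\ref{rem:one_sided_alphabet}.

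Fix the interval $I$ and the data (C1)--(C5), and write $p(\theta):=(\theta,\alpha_h^+(\theta))\in\mathcal G_h^+$ for $\theta\in I$; all these points lie in $\mathcal U$. Consider \emph{divergent} itineraries $\sigma\in\mathcal S^{\mathbb N}$, i.e.\ with $\sigma_i\to+\infty$. For the resulting trajectory, let $\theta_n$ be its impact points and $u_n:=\mathcal B^{n-1}z(\sigma)$.

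The key estimate is that $\operatorname{dist}(u_n,\mathcal G_h^+)\to0$. The outgoing arc at $\theta_n$ is $s^{\theta_n,\theta_{n+1}}_{\sigma_n,\theta_0}$ with $\sigma_n\to\infty$. Proposition~\ref{prop:approximation_stable_manifold}, which is uniform in both endpoints, then forces its initial velocity to converge to $X^h_+(\tilde\gamma(\theta_n))$. Equivalently, $u_n-p(\theta_n)\to0$. By invariance, $\mathcal F(z(\sigma))=\mathcal F(u_n)$ for every $n$. Since $\mathcal F$ is uniformly continuous near the compact arc $p(I)\subset\mathcal U$, it follows that $\mathcal F(z(\sigma))=\lim_n\mathcal F(p(\theta_n))$. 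In particular, $\mathcal F(z(\sigma))$ lies in the closure of $\mathcal F(p(I))$.

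To pin down a single value, I would compare with $\sigma_1\to\infty$. By Corollary~\ref{cor:one_sided}(iii), $z(\sigma)\to p(\bar\theta)$, hence $\mathcal F(z(\sigma))\to\mathcal F(p(\bar\theta))$. Now fix $\bar\theta$ and $\bar\theta'$ in $I$. Because the impact points of the constructed orbits are only known to lie in $I$, the limit above only places $\mathcal F(z(\sigma))$ somewhere in $\overline{\mathcal F(p(I))}$. Getting from this to equality of $\mathcal F(p(\bar\theta))$ and $\mathcal F(p(\bar\theta'))$ is the step I expect to be the real obstacle, because the impact points are not controlled. I would try first to exploit the freedom of Lemma~\ref{lem:miranda_infinite}: the sign conditions of Lemma~\ref{lem:sign_on_faces} are uniform, so I can run the Miranda argument on an arbitrary subinterval $I'\subset I$ containing a point where $f'$ changes sign. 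The latter is available, shrinking around a minimum point as in Lemma~\ref{lem:choice_of_I}. This confines all the $\theta_n$, $n\ge2$, to $I'$, while $\theta_1=\bar\theta$ is arbitrary in $I$. Letting $I'$ shrink to a single minimum point $\theta_*$ of $f$ then gives
\[
\mathcal F(z(\sigma))\to\mathcal F(p(\theta_*)) \quad\text{for all admissible divergent }\sigma.
\]
The constraint $\sigma_1\to\infty$ then yields $\mathcal F(p(\bar\theta))=\mathcal F(p(\theta_*))=:c$ for every $\bar\theta\in I$. Feeding this back into the previous paragraph gives $\mathcal F(z(\sigma))=c$ exactly, for every divergent $\sigma$ and every $\bar\theta\in I$. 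If shrinking $I'$ clashes with the uniformity of $k_0$, I would instead use that $\theta_*$ is fixed and $k_0(I')$ merely increases, which is harmless since only divergent itineraries are used.

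For the conclusion, fix $\bar\theta$ in the interior of $I$. By Corollary~\ref{cor:one_sided}(ii) the points $z(\sigma)$ are pairwise distinct. Taking divergent $\sigma$ with $\sigma_1\to\infty$, they accumulate at $p(\bar\theta)$ inside the real-analytic arc $F_{\bar\theta}\cap\mathcal U$. The restriction of $\mathcal F-c$ to the connected component of that arc containing $p(\bar\theta)$ is real-analytic in $\alpha$ and vanishes on a set accumulating at an interior point, so it vanishes identically. The union over $\bar\theta\in\operatorname{int}I$ of these fibre pieces contains a non-empty open subset of $M_h$ contained in $\mathcal U$. Since $\mathcal U$ is connected and $\mathcal F$ is real-analytic in $(\theta,\alpha)$, the identity principle gives $\mathcal F\equiv c$ on $\mathcal U$.
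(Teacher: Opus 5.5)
Your argument breaks at the step where you let $I'$ shrink to a single minimum point $\theta_*$. That step needs points $\theta'_-<\theta_*<\theta'_+$, arbitrarily close to $\theta_*$, with $f'(\theta'_-)<0<f'(\theta'_+)$. In other words, $f$ must be non-constant on every one-sided neighbourhood of $\theta_*$.

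The standing assumptions ($\mathcal C^1$ boundary, not a confocal ellipse) do not guarantee this. The boundary $\partial\Omega$ may coincide with the confocal ellipse $\{f=\bar f\}$ along an arc and leave it monotonically on either side. This happens even for $\mathcal C^\infty$ boundaries. Then the minimum set contains an interval $[a_1,a_2]$, and every admissible $I'$ near it must contain $[a_1,a_2]$.

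In that case your limits only give $\mathcal F(p(\bar\theta))\in\mathcal F\bigl(p([a_1,a_2])\bigr)$, and nothing forces $\mathcal F\circ p$ to be constant on $[a_1,a_2]$. So you never obtain a single value $c$ with $\mathcal F(z(\sigma))=c$, and without it the identity principle on $F_{\bar\theta}$ cannot be started. When the minima of $f$ are isolated, e.g.\ for real-analytic $\partial\Omega$ as in Theorem~\ref{thm:B}, your shrinking argument does go through, though it is roundabout.

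The missing idea is that the obstacle you identified, uncontrolled impact points, disappears if you use a single bounce instead of a Miranda orbit. For any $\theta_1,\theta_2\in I$ and $k$ large, the arc $s^{\theta_1,\theta_2}_{k,\theta_0}$ is a genuine billiard arc by Proposition~\ref{prop:confinement}. It has \emph{both} endpoints prescribed and no reflection equation to solve. If $z_k$ is its initial state, then $\mathcal F(z_k)=\mathcal F(\mathcal B z_k)$. By Proposition~\ref{prop:approximation_stable_manifold}, $z_k\to p(\theta_1)$ while $\mathcal B z_k\to q(\theta_2):=(\theta_2,-\alpha_h^-(\theta_2))$.

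Hence $\mathcal F(p(\theta_1))=\mathcal F(q(\theta_2))$ for all $\theta_1,\theta_2\in I$. The left side depends only on $\theta_1$ and the right side only on $\theta_2$, so $\mathcal F$ is constant on $p(I)$, whatever the structure of the minimum set. To guarantee $\mathcal B z_k\in\mathcal U$, note that $q=p$ on the minimum set by Lemma~\ref{lem:cone_free}, so $q(I)\subset\mathcal U$ once $\delta$ is small.

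This is the first step of the paper's proof. After it, your remaining steps coincide with the paper's: divergent itineraries, $\mathcal F(z(\sigma))=c$, accumulation at $p(\bar\theta)$ on the fibre, and the identity principle on the fibre and then on $\mathcal U$. You should also state explicitly that the threshold on $k$ is raised so that every iterate $u_n$ lies in $\mathcal U$. Invariance is only assumed when both $u$ and $\mathcal B(u)$ lie in $\mathcal U$.
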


\begin{proof}
     Let us observe first that $\mathcal{F}$ is constant on $\mathcal{G}_h^+$ in a neighbourhood of the minimum set of $f$. Indeed, Proposition \ref{prop:approximation_stable_manifold} implies that whenever $\sk\subseteq\bar{\Omega}$ for $k$ large enough, we can pass to the limit with respect to $k$ and obtain 
	\[
	\mathcal{F}(\theta_1,\alpha_h^+(\theta_1)) = \mathcal{F}(\theta_2,-\alpha_h^-(\theta_2)
	\]
	Moreover, if $\theta'$ is a critical point of $f$, it holds that $-\alpha_h^-(\theta') =\alpha_h^+(\theta'))$. Combining this with Proposition \ref{prop:confinement}, we conclude that, in a neighbourhood of a minimum point, the function $\mathcal{F}$ is constant  and equal to $c_+$ on $\mathcal{G}_h^+$. An analogous argument shows that $\mathcal{F}$ must be constant and equal to $c_-$ on $\mathcal{G}_h^-$.
	
	Pick $\bar{\theta} \in I$ so that the fiber over $\gamma(\bar\theta)$ has non-empty intersection with $\mathcal{U}\cap \mathcal{D}$.  The approximation of $\mathcal{W}^s(\Gamma_h)$ is uniform and so, after increasing the threshold on $k$ if necessary, the one-sided trajectories given in Corollary \ref{cor:one_sided} are entirely contained
	in $\mathcal U$.
	
	Let us consider a sequence 
	$\sigma\in\mathcal S^{\mathbb N}$ satisfying $\sigma_i\to+\infty$, i.e. such that the number of windings grows with the iterations. 
	For any such sequence there exists an initial condition $z(\sigma)\in F_{\bar\theta}$ given by
	Corollary~\ref{cor:one_sided}, giving a billiard orbit that realizes said sequence. 
	Since the arcs $\sk$ converge to the stable manifold, the value of the first integral must be equal to $c_+$. Since the fiber over $\bar \theta$ is analytic, and the restriction of $\mathcal{F}$ to it attains the value $c_+$ infinitely many times and the zeros accumulate at the intial condition corresponding to the stable graph. Thus, $\mathcal{F}$ it is constant.  Since $\bar \theta$ is arbitrary, we conclude that $\mathcal{F}$ must be constant in a neighbourhood of the minimum set and thus everywhere on $\mathcal{U}$. An analogous argument settles the case of $\mathcal{G}_h^-$.
\end{proof}

Finally, let us notice that the
non-integrability theorem is logically independent of the entropy statement:
the former uses one-sided itineraries with divergent winding numbers, whereas
the latter arises from bi-infinite symbolic sequences.

	\bibliographystyle{plain}
	\bibliography{rigidity_2center}

	\bigskip
	
	\noindent
	S. Baranzini\\
	Universit\`a  San Raffaele di Roma\\
	Via di Val Cannuta 247, 00166 Roma, Italy\\
	\vspace{-0.2cm}
	
	\noindent
	Ruhr-Universit\"at Bochum\\
	Universit\"atsstra\ss e 150, 44801 Bochum S\"ud, Germany \\
	\texttt{stefano.baranzini@uniroma5.it}

	\vspace{0.8cm}
	
	\noindent
	S. Terracini \\
	Dipartimento di Matematica ``Giuseppe Peano'', Universit\`a degli Studi di Torino\\
	Via Carlo Alberto 10, 10123 Torino, Italy\\
	\texttt{susanna.terracini@unito.it}

\end{document}